\documentclass[a4paper,11pt]{amsart}

\usepackage[top=2.5cm, bottom=2.5cm, left=2.5cm, right=2.5cm]{geometry}

\usepackage{amssymb}
\usepackage{ascmac}
\usepackage[dvipdfmx]{graphicx}
\usepackage{color}
\usepackage{blkarray}
\usepackage{multirow,bigdelim}

\usepackage[colorlinks=true,bookmarks=true,
bookmarksnumbered=true,bookmarkstype=toc,linktocpage=true
]{hyperref}

\newtheorem{Def}{Definition}[section]
\newtheorem{Thm}[Def]{Theorem}
\newtheorem{Lem}[Def]{Lemma}
\newtheorem{Assumption}[Def]{Assumption}
\newtheorem{Rem}[Def]{Remark}
\newtheorem{Cor}[Def]{Corollary}
\newtheorem{Prop}[Def]{Proposition}

\numberwithin{equation}{section}

        \newcommand{\Tr}{\operatornamewithlimits {Trace}}

\allowdisplaybreaks

\usepackage{ascmac}

\newcommand{\mcf}{\mathcal{F}}

\newcommand{\mcl}{\mathcal{L}}

\newcommand{\mcm}{\mathcal{M}}

\newcommand{\mcn}{\mathcal{N}}

\newcommand{\mbbu}{\mathbb{U}}

\newcommand{\mbbh}{\mathbb{H}}
\newcommand{\bmbbh}{\bar{\mathbb{H}}}
\newcommand{\mbbn}{\mathbb{N}}

\newcommand{\mbbr}{\mathbb{R}}

\newcommand{\mbfx}{\mathbf{X}}
\newcommand{\mbfz}{\mathbf{Z}}

\newcommand{\al}{\alpha}
\newcommand{\del}{\delta}
\newcommand{\ep}{\epsilon} 
\newcommand{\vp}{\varphi}

\newcommand{\D}{\Delta}
\newcommand{\sig}{\sigma}

\newcommand{\lam}{\lambda}

\newcommand{\gam}{\gamma}
\newcommand{\Gam}{\Gamma}
\newcommand{\p}{\partial}

\newcommand{\Ez}{E_{\mbfz_n}}
\newcommand{\Xk}{X_{t_k}}
\newcommand{\pXk}{X_{t_{k-1}}}
\newcommand{\Zk}{Z_{t_k}}
\newcommand{\pZk}{Z_{t_{k-1}}}

\newcommand{\cil}{\xrightarrow{\mcl}} 
\newcommand{\cip}{\xrightarrow{p}} 

\newcommand{\diag}{\mathop{\rm diag}} 
\newcommand{\kl}{\text{KL}}
\newcommand{\ic}{\text{IC}}
\newcommand{\el}{\text{EL}}


\def\nn{\nonumber}

\def\suml{\sum_{l_1=1}^{d}}
\def\sumk{\sum_{k=1}^n}

\def\intk{\int_{t_{k-1}}^{t_k}}

\def\intd{\int_{\mbbr^d}}
\def\ezn{E_{\mbfz_n}}

\def\dim{\mathrm{dim}}

\def\tz{\theta_{0}}
\def\az{\alpha_{0}}
\def\lz{\lambda_{0}}
\def\tes{\hat{\theta}_{n}}
\def\les{\hat{\lambda}_{n}}
\def\aes{\hat{\alpha}_{n}}

\title[]
{Predictive information criterion for jump diffusion processes}
\date{\today}
\keywords{Akaike information criterion, Jump diffusion process, Malliavin calculous }

\author{Yuma Uehara}
\address[Department of Mathematics, Faculty of Engineering Science, Kansai University, 3-3-35 Yamate-cho, Suita-shi, Osaka 564-8680, Japan
]{}
\email{y-uehara@kansai-u.ac.jp}


\begin{document}

\maketitle

\begin{abstract}
In this paper, we address a model selection problem for ergodic jump diffusion processes based on high-frequency samples. We evaluate the expected genuine log-likelihood function and derive an Akaike-type information criterion based on the threshold-based quasi-likelihood function. In the derivation process, we also give new estimates of the transition density of jump diffusion processes.
We also provide the relative selection probability of the proposed information criterion.
\end{abstract}

\section{Introduction}

In this paper, we suppose that we observe an equally spaced high-frequency sample $\mbfx_n=\{X_{kh_n}\}_{k=1}^n$ with a positive decreasing sequence $\{h_n\}$ satisfying $T_n=nh_n\to\infty$ from the following $d$-dimensional ergodic jump diffusion process defined on a stochastic basis $(\Omega, \mcf,P)$:
\begin{align*}
        &dX_t=A(X_t)dt+B(X_t)dw_t+\int zN(dt,dz),
\end{align*}
where $w$ is a $d$-dimensional standard Wiener process, the coefficients $A:\mbbr^d\to\mbbr^d$ and $B:\mbbr^d\to\mbbr^d\times\mbbr^d$ are bounded measurable functions, and $N$ is a Poisson random measure whose mean measure is written as $f(z)dzdt$ with $\int f(z)dz<\infty$.
Moreover, $f(z)$ is supposed to be expressed as $\lambda_0 F(z)$ where $\lambda_0$ is a positive constant and $F(z)$ is a probability density function whose tail is bounded by a constant multiple of a Laplace probability density.
For such a setting, our objective is to construct a predictive information criterion to select a plausible one among candidate models:
\begin{align*}
        &dX_t=a_{m_1}(X_t,\theta_{m_1})dt+b_{m_2}(X_t,\sig_{m_2})dw_t+\int zN_{\lambda,\theta_{m_1}}(dt,dz),
\end{align*}
where $\theta_{m_1}\in\Theta_{\theta_{m_1}}\subset \mbbr^{p_{\theta_{m1}}}$, $\sig_{m_2}\in\Theta_{\sig_{m_2}}\subset \mbbr^{p_{\sig_{m_2}}}$ and $\lam\in(\underline{\lam},\bar{\lam})$ with $0<\underline{\lam}<\bar{\lam}<\infty$, $m_1\in\{1,\dots, M_1\}$ and $m_2\in\{1,\dots, M_2\}$.
We assume the nested condition; that is, each candidate model contains the true model.
Each coefficient is supposed to be bounded and satisfy some regularity condition as well, and $N_{\lambda,\theta_{m_1}}$ is a Poisson random measure and its mean measure is expressed as $f_{\theta_{m_1},\lam}=\lam F_{\theta_{m_1}}$ with probability density $F_{\theta_{m_1}}$. We note that the parameter $\lam$ does not depend on $\theta$, and that the selection targets are the drift and diffusion coefficients, and the jump distribution.
There are several studies on model selection for stochastic processes by information criteria: mixing processes (\cite{UchYos01}), diffusion processes (\cite{Uch10} and \cite{EguMas18}), multivariate CARMA processes \cite{FasKim17}, L\'{e}vy driven SDE models (\cite{EguUeh21} and \cite{EguMas24}).

Let $\mbfz_n$ be an independent copy of $\mbfx_n$ and we write $t_k=kh_n$.
Based on the time homogeneity of $X$, we write its transition density function as $q_t(x,y;\al,\lam)$. 
We note that the functional form of $q_t(x,y;\al,\lam)$ is not obtained explicitly in general.
Combined with the Markov property of $X$, the log-likelihood function $l_n$ can be expressed as 
\begin{equation*}
  l_n(\mbfx_n, \theta)=\sumk \log q_{h_n}(\pXk, \Xk;\al,\lam).
\end{equation*}
To conduct Akaike information criterion (AIC, \cite{Aka74}) based model selection, we will evaluate
\begin{equation*}
    E_{\mbfz_n}[l_n(\mbfz_n,\aes,\les)]=E_{\mbfz_n}\left[\sumk \log q_{h_n}(\pZk, \Zk;\aes,\les)\right],
\end{equation*}
where $E_{\mbfz_n}$ and $(\aes,\les)$ denote the expectation operator with respect to $\mcl(\mbfz_n)$ and the threshold-based quasi-maximum likelihood estimator constructed by $\mbfx_n$, respectively. 
In the case where the underlying model is diffusion processes, \cite{Uch10} utilizes the theory of Malliavin calculous and stochastic flow in order to evaluate the (derivative of) transition density function and the paper gives an asymptotically unbiased estimator of $E_{\mbfz_n}[l_n(\mbfz_n,\aes,\les)]$ building on the Gaussian quasi-likelihood function.
This approach also works for jump diffusion processes when we evaluate the derivative of the transition density with respect to the diffusion parameter and drift parameter which is not contained in the jump component. 
Indeed \cite{KohNuaTra17} and \cite{TraNgo23} derive the local asymptotic (mixed) normality of jump diffusion models based on this approach.
However, when the jump structure is additionally parametrized as in our case, the Poisson random measure may possibly depend on the parameter in some non-linear way, and it makes difficult to evaluate the derivative of the transition density with respect to the jump parameter in a similar manner.
For this problem, we consider the expansion of the transition density as in \cite{OgiUeh23} which showed the local asymptotic normality for jump diffusion processes whose jump component is parametrized, and we obtain an asymptotically unbiased estimator of $E_{\mbfz_n}[l_n(\mbfz_n,\aes,\les)]$ which is based on threshold-based quasi-likelihood function introduced in the next section.

The rest of this paper is constructed as follows. In Section \ref{Main}, we provide our technical assumptions and the main result. In Section \ref{Numerical}, the simulation result of model selection based on our information criterion is given. All of our technical proofs are presented in Section \ref{Appendix}.

\section{Main result}\label{Main}

To avoid notational confusion, we consider the following single model:
\begin{align*}
        &dX_t^{\al,\lam}=a(X_t^{\al,\lam},\theta)dt+b(X_t^{\al,\lam},\sig)dw_t+\int zN_{\lam,\theta}(dt,dz),
\end{align*}
where $\al=(\theta,\sig)$ and the mean measure of the Poisson random measure $N_{\lam,\theta}$ is expressed as $\lam F_\theta$ with a probability density $F_\theta$.
Similarly to the introduction, we also write the parameter space of the unknown $p_\theta$ dimensional parameter $\theta$ and the $p_\sig$ dimensional parameter $\sig$ as $\Theta_\theta$ and $\Theta_\sig$, respectively.
The true value is written as $\lam_0$ and $\al_0$, and we often abbreviate $X^{\al,\lam}$ as $X$ below.

Before we provide our technical assumptions, we introduce the notation used in the rest of this paper.
\begin{itemize}
\item We write $S(x,\sig)=(b(x,\sig))^{\otimes2}$. 
\item We write $\D_k X= X_{t_k}-X_{t_{k-1}}, \ \bar{X}_k(\theta)=\D_k X-h_na(X_{t_k},\theta)$.
\item $I_d$ denotes the $d\times d$-identity matrix.
\item For a matrix $A$, $\Tr(A)$ stands for its trace.
\item For a vector $x=(x_1,\dots,x_k)^\top$, we write $$\p_x=\bigg(\frac{\p }{\p x_1},\dots,\frac{\p}{\p x_k}\bigg)^\top$$ where $\top$ is the transpose operator.
\item For a vector $y$, $|y|$ and $||y||_1$ represent its Euclidean norm and $L_1$-norm, respectively.
\item For positive sequences $\{a_n\}$ and $\{b_n\}$, $a_n\lesssim b_n$ means that 
$$\limsup_{n\to\infty} \frac{a_n}{b_n}<\infty.$$
\item For a function $f$, $\text{supp}(f)$ denotes its support and for a set $A$, $\p A$ stands for its boundary.
\item $\cil$ and $\cip$ represent convergence in distribution and convergence in probability, respectively.
\item $C$ denotes a generic positive constant that possibly varies from line to line.
\end{itemize}

\subsection{Assumptions}

Let $\eta$ be the law of the initial variable $X_0$.
\begin{Assumption}\label{inidist}(Initial distribution)
    For any $q>0$, 
    $$\int_{\mbbr^d}|x|^q\eta(dx)<\infty.$$
\end{Assumption}

\begin{Assumption}\label{pspace}(Parameter space)
The parameter spaces $\Theta_\theta$ and $\Theta_\sig$ are bounded convex domains in $\mbbr^{p_\theta}$ and $\mbbr^{p_\sig}$, respectively.
Moreover, there exist positive constants $\underline{\lam}$ and $\bar{\lam}$ such that $\lz\in(\underline{\lam}, \bar{\lam})$.
\end{Assumption}

\begin{Assumption}\label{coef}(Coefficients)
For all $i,j\in\{0,1,2,3,4,5\}$ satisfying $i+j\leq 5$, the derivatives $\p_x^i\p_\theta^j a(x,\theta)$ and $\p_x^i\p_\sig^j b(x,\sig)$ exist and are continuous on $\mbbr^d\times\Theta_\theta$ and $\mbbr^d\times\Theta_\sig$, respectively. In addition there exist positive constants $C$ and $\kappa$ such that
\begin{align*}
    & x^\top a(x,\theta)\leq -C |x|, \quad \text{for every $|x|$ large enough},\\
    &|a(x,\theta)|+|\p_x a(x,\theta)|+|b(x,\sig)|+|\p_x b(x,\sig)|\leq C,\\
    &|\p_x^i\p_\theta^j a(x,\theta)|+|\p_x^i\p_\sig^j b(x,\sig)|\leq C(1+|x|^\kappa),
\end{align*}
for all $i,j\in\{0,1,2,3,4,5\}$ satisfying $i+j\leq 5$ and $(\theta,\sigma)$.
\end{Assumption}

\begin{Assumption}\label{nondege}(Nondegeneracy)
    For any $x$ and $\sig$, $b(x,\sig)$ is symmetric and positive definite, and there exists a positive constant $C\geq1$ such that
    $$C^{-1}I_d\leq b(x,\sigma)\leq CI_d.$$
\end{Assumption}

\begin{Assumption}\label{jdist}(Jump distribution)
\begin{enumerate}
\item The support of $F_\theta$ does not depend on $\theta$, and for a constant
   $$\xi(\theta)=\sup_\beta\Bigg\{\beta\in\mbbr\bigg|\intd \frac{F_{\tz}(z)}{(F_\theta(z))^\beta}1_{F_{\tz}\neq0}(z)dz<\infty\Bigg\}\wedge1,$$
$\xi:=\inf_{\theta\in\Theta_\theta} \xi(\theta)>0.$
\item There exists a positive constant $C$ such that
$$F_\theta(z)\leq C e^{-C|z|},$$
for all $z\in\mbbr^d$ and $\theta\in\Theta_\theta$.
\item  The derivative $\partial^{k}_z\partial_\theta^l F_\theta(z)$ exists and is continuous with respect to $\theta\in \Theta_\theta$ for any $k\in\{0,1\}$, $l\in\{0,1,2,3,4,5\}$ and $z\in \mbbr^d$. Moreover, there exist constants $\gamma\geq 0$,  $C>0$, and $\epsilon'>0$ such that
\begin{align}
    \nn &|F_\theta(z)|1_{\{|z|\leq  \epsilon'\}}\leq C|z|^\gamma,
 \quad |\partial_\theta^l\log F_\theta(z)|1_{\{F_\theta(z)\neq0\}}\leq C(1+|z|)^{C},\\
\nn &|\partial_\theta^l\log F_\theta(z_1)-\partial_\theta^l\log F_\theta(z_2)|1_{\{F_\theta(z_1)F_\theta(z_2)\neq0\}}\leq C|z_1-z_2|(1+|z_1|+|z_2|)^{C}
\end{align}
for any $z,z_1,z_2\in \mbbr^d$, $\theta\in \Theta_\theta$, and $l\in \{1,2,3,4,5\}$.
\end{enumerate}
\end{Assumption}

Next, we introduce the balance condition on the sampling grid $\{h_n\}$.
\begin{Assumption}\label{balance-cond}(Sampling)
\begin{enumerate}
    \item $n^{-3/5}\lesssim h_n\lesssim n^{-4/7}.$
    \item There exists a positive constant $\eta>0$ such that
\begin{equation}
     nh_n^\frac{1+((d+\gamma)\rho)\wedge 1}{1+\eta}\to 0.
\end{equation}
as $n\to\infty$.
\end{enumerate}
\end{Assumption}

Due to the effect of the continuous part of $X$, the increment $\D_k X$ may not be included in $\text{supp}(F_\theta)$.
The next assumption is to handle such a probability, and the function $\varphi$ introduced in the following is for the truncation which is crucial in the construction of a quasi-likelihood function.

\begin{Assumption}\label{jsupp}(Truncation)
\begin{enumerate}
    \item There exist constants $\ep>0$ and $N_0\in\mbbn$ fulfilling that 
\begin{equation}
    \nn\int_{\{z: d(z, \partial\text{supp}(F_\theta))\leq h_n^\rho\}} F_\theta(z)dz \leq h_n^\ep
\end{equation}
for all $n\geq N_0$.
    \item There exists a sequence of real-valued differentiable functions $\{\varphi_n(z)\}_{n\in\mbbn}$ on $\mbbr^d$ satisfying the following properties: for all $n\in\mbbn$, $0\leq \varphi_n(z)\leq1$ and there exists a positive constant $M$ such that $\varphi(z)=0$ whenever $z\in D_n$ where
    \begin{align*}
        D_n=\bigcup_{k=0}^4\Bigg\{z\in\mbbr^d\bigg| \sup_\theta\Big|\p^k_\theta \log F_\theta(z)\Bigg|\geq \frac{M}{\ep^{k\vee1}_n}\Bigg\}\bigcup\bigcup_{k=0}^4 \Bigg\{z\in\mbbr^d\bigg|\sup_\theta\Big|\p_z\p_\theta^k\log F_\theta(z)\Big|\geq \frac{M(1+|z|^M)}{\ep^{k\vee1}_n}\Bigg\}.
    \end{align*}
    Furthermore, for all $n\in\mbbn$, the derivative $\p_z\varphi_n$ is continuous and satisfies
    $$\p_z\varphi_n(z)=0 \quad \text{on} \ D_n, \quad \text{and} \ \sup_z|\p_z\varphi_n(z)|=O(\ep_n^{-1}).$$
    \item There exist positive constants $C_5$ and $C_6$ such that
    \begin{align*}
        &\sup_\theta\int_{\mbbr^d}\bigg|\p_\theta^k \log F_\theta(z)\bigg|F_{\tz}(z)(1-\vp_n(z))1_{F_{\tz}\neq0}(z)dz\leq C_5h_n^{C_6}, \quad k=0,2,\\
        & \sup_\theta\int_{\mbbr^d}\big|F_\theta(z)-F_{\tz}(z)\big|(1-\vp_n(z))dz\leq C_5h^{C_6},\\
         &\int_{\mbbr^d} \big|\p_\theta^{2}F_{\tz}(z)\big|(1-\varphi_n(z))dz\leq C_5h_n^{C_6}.
    \end{align*}
\end{enumerate}
\end{Assumption}

Let $\theta^\star=(\theta,\lam)$.
We introduce the $(p_\theta+p_\sig+1)\times (p_\theta+p_\sig+1)$-dimensional matrix $\Gam=\diag(\Gam_\sig,\Gam_{\theta^\star})$ defined by
\begin{align*}
    [\Gamma_\sig]_{ij}&=\frac{1}{2}\int {\rm tr}(\partial_{\sigma_i}SS^{-1}\partial_{\sigma_j}SS^{-1})(x,\sigma_0)d\pi_0(x), \\
    [\Gamma_{\theta^\star}]_{ij}&=\int (\partial_{\theta_i^\star} a)^\top S^{-1}(\partial_{\theta_j^\star} a)(x,\alpha_0)d\pi_0(x)
    + \int \frac{\partial_{\theta_i^\star} f_{\theta_0}\partial_{\theta_j^\star} f_{\theta_0}}{f_{\theta_0}} 1_{\{f_{\theta_0}\neq 0\}}(z)dz.
\end{align*}

\begin{Assumption}
    $\Gam$ is positive definite.
\end{Assumption}

Under Assumptions \ref{inidist}, \ref{coef}, \ref{nondege}, and \ref{jdist}, \cite[Theorem 1.4]{Mas08} shows that the transition semigroup of $X=X^{\al_0,\lam_0}$ admits a unique invariant measure $\pi_0$.
For this measure, we define two functions $Y^1$ and $Y^2$ by
\begin{align*}
    &Y^1(\sig;\sig_0)=\frac{1}{2}\int \Tr\big(I_d-S^{-1}(x,\sig)S(x,\sig_0)\big)\pi_0(dx)-\frac{1}{2}\int\log\frac{\det S(x,\sig)}{\det S(x,\sig_0)}\pi_0(dx),\\
    &Y^2(\theta;\theta_0)=-\frac{1}{2}\int S^{-1}(x,\sig_0)\Big[\big(a(x,\theta)-a(x,\tz)\big)^{\otimes2}\Big]\pi_0(dx)+\int \Big(\log F_\theta(z)-\log F_{\tz}(z)\Big) F_{\tz}(z)dz.
\end{align*}
For these functions, we assume the following identifiability condition. 
\begin{Assumption}\label{indentifiability}(Identifiability)
    There exist positive constants $\chi_\sig$ and $\chi_\theta$ such that for all $\sig\in\Theta_\sig$,
    $$Y^1(\sig;\sig_0)\leq -\chi_\sig|\sig-\sig_0|^2,$$
    and for all $\theta\in\Theta_\theta$,
    $$Y^2(\theta;\tz)\leq -\chi_\theta|\theta-\theta_0|^2.$$
\end{Assumption}


We leave some comments on our technical assumptions.
\begin{itemize}
\item From Assumption \ref{coef}, it is easy to check that the drift and diffusion coefficients are Lipschitz continuous, and thus, the Markov property and the existence of the strong solution $X^{\al,\lam}$ hold (cf. \cite{App09}).
\item Under Assumptions \ref{inidist}, \ref{coef}, \ref{nondege}, and \ref{jdist}, \cite[Theorem 1.4]{Mas08} also ensures the ergodic theorem:
\begin{equation}\label{ergthm}
    \frac{1}{T}\int_0^T f\big(X_t\big)dt\cip \int f(x)\pi_0(dx),
\end{equation}
for every $\pi_0$-integrable $f$. We can also see that $\sup_t E[|X_t^{\al,\lam}|^q]<\infty$ for all $\al$ and $\lam$ and that by means of the argument in \cite[Lemma 8]{Kes97}, under the balance condition: $nh_n\to\infty$ and $h_n\to0$ (looser than Assumption \ref{balance-cond}), we get
$$\frac{1}{n}\sumk f(X_{t_k})\cip \int f(x)\pi_0(dx),$$
for every differentiable function $f$ whose derivative and itself are of at most polynomial growth.
\item Regarding Assumption \ref{jdist}-(1), $\xi(\theta)\geq 0$ is straightforward by its definition, and for all $\kappa\in[0,\xi(\theta)]$, we have
\begin{align*}
    &\intd \frac{F_{\tz}(z)}{(F_\theta(z))^\kappa}1_{F_{\theta_0}\neq0}(z)dz\\
    &\leq \intd\frac{F_{\tz}(z)}{(F_\theta(z))^{\xi(\theta)}}1_{\{F_\theta(z)<1\}\cap\{F_{\theta_0}\neq0\}}(z)dz+\intd F_{\tz}(z)1_{\{F_\theta(z)\geq1\}\cap\{F_{\theta_0}\neq0\}}(z)dz<\infty.
\end{align*}
Assumption \ref{jdist}-(1) and the above estimate will be useful in the case where we evaluate remainder terms of the log likelihood function which typically have two different parameter in integrals with respect to the transition density function (for more details, see Lemma \ref{momratio} and Lemma \ref{derpar} in Section \ref{Appendix}). 
\item Examples of jump distributions and the construction of the truncating function $\varphi$ satisfying Assumption \ref{jdist} and Assumption \ref{jsupp} are presented in \cite{ShiYos06}, \cite{OgiYos11}, and \cite{OgiUeh23}.
\item The balance condition Assumption \ref{balance-cond} is stronger than the so-called ``rapidly increasing design": $nh_n\to\infty$ and $nh_n^2\to0$ often assumed in the ergodic framework in statistical inference of stochastic processes. This is for the moment convergence of the threshold-based quasi-likelihood estimator which will be introduced below, and for theoretical guarantees when using thresholds to discriminate jumps around 0. Especially, if the jump density $F_\theta(z)$ decreases at a polynomial order as $z$ tends to 0 (that is, $\gam$ takes a positive value in Assumption \ref{jdist}) or the dimension $d$ is larger than $2$, Assumption \ref{balance-cond}-(2) becomes looser.
\end{itemize}

\subsection{Quasi-likelihood analysis}
Since the explicit form of the transition density function of $X$ cannot be obtained in general, we instead consider the threshold-based quasi-likelihood function when we estimate unknown parameters $\lam$ and $\al=(\theta, \sig)$.
We briefly review the theory of the threshold quasi-likelihood function established in \cite{ShiYos06} and \cite{OgiYos11}.
Let $L_n=\{ x\in \mbbr^m||x|\leq h_n^\rho\}$ and $\rho\in(3/8+b,1/2)$ with a positive constant $b\in(0,1/8)$. 
We introduce the threshold quasi-likelihood function by \cite{ShiYos06} and \cite{OgiYos11}.
Following \cite{Man04} and \cite{ShiYos06}, we regard that at least one jump occurs within $(t_{k-1}, t_k]$ if the magnitude of the increment $\D_k X:= X_{t_k}-X_{t_{k-1}}$ exceeds the threshold $h_n^\rho$, and otherwise there is no jump on $(t_{k-1}, t_k]$, that is,
$$\D_k X\overset{P_{\al,\lam}}=\intk a(X_t,\theta)dt+\intk b(X_t,\sig)dw_t\overset{P_{\al,\lam}}\approx a(X_{t_{k-1}},\theta)h_n+b(X_{t_{k-1}},\sig)\D_k w.$$
We briefly write $f_k(\al)=f(X_{t_k},\al)$ for any function $f$.
By mimicking \cite{ShiYos06} and \cite{OgiYos11}, we define the threshold quasi-likelihood function $\mbbh_{n}(\al,\lam):=\mbbh_{1,n}(\al)+\mbbh_{2,n}(\lam,\theta)$ as:
\begin{align*}
    &\mbbh_{1,n}(\al)=-\frac{1}{2}\sumk\Bigg(\log \det S_{k-1}(\sig)+\frac{1}{h_n}S_{k-1}^{-1}(\sig)\Big[(\bar{X}_k(\theta))^{\otimes2}\Big]\Bigg)1_{L_n}(\D_k X),\\
    &\mbbh_{2,n}(\lam,\theta)=\sumk \Bigg(\log \lam +\log F_\theta(\D_k X)\Bigg)\varphi_n(\D_k X)1_{L_n^c}(\D_k X)-\lam nh_n\int F_\theta(z)\varphi_n(z)dz,
\end{align*}
where $\bar{X}_k(\theta)=\D_k X-h_na_{k-1}(\theta)$.
$\mbbh_{2,n}$ corresponds to the case where one jump occurs within $(t_{k-1}, t_k]$.
For a more detailed plausible construction of $\mbbh_n(\al,\lam)$, we refer to \cite[Section 2.3]{ShiYos06}.
We define the quasi-maximum likelihood estimator $(\aes,\les)$ as a maximizer of $\mbbh_{n}(\al,\lam)$.

Let $\hat{u}_n=(\sqrt{n}(\hat{\sig}_n-\sig_0), \sqrt{T_n}(\hat{\theta}^\star_n-\theta_0^\star))$.
The asymptotic normality and moment convergence of $\hat{u}_n$ can be obtained under Assumptions \ref{inidist}-\ref{indentifiability}  (cf. \cite{ShiYos06} and \cite{OgiYos11}):
\begin{enumerate}
    \item (Asymptotic normality)
    $$\hat{u}_n\cil N(0,\Gam^{-1}).$$
    \item (Moment convergence) For any continuous function $f$ of at most polynomial growth, it follows that
    $$E[f(\hat{u}_n)]\to E[f(u)],$$
    where $u$ is a random variable whose distribution is $N(0,\Gam^{-1})$.
\end{enumerate}
Moreover, the quasi-maximum likelihood estimator is shown to be asymptotically efficient in the sense of the convolution theorem by \cite{OgiUeh23}.



\subsection{Main result}
To evaluate the expected log-likelihood function, we first present the transition density estimates for jump diffusion processes.
We denote by $p_t(x,y;\al)$ the transition density function of $X^{\al,c}$ and by $q_t(x,y;\al,\lam)$ that of $X^{\al,\lam}$ where the process $X^{\al, c}$ is the solution of the following stochastic differential equation:
$$dX_t^{\al,c}=a(X_t^{\al,c},\theta)dt+b(X_t^{\al,c},\sig)dw_t.$$
Let $\tau_i$ be the $i$-th jump time of $X^{\al,\lam}$.
Since for each $i\geq2$, the conditional distribution of $(\tau_1,\dots,\tau_i)$ given the event $\{N_{h_n}=i\}$ is equivalent to that of the order statistics $U(1)\leq \dots \leq U(i)$ of i.i.d. $(0,h_n)$-uniformly distributed random variables \cite[Proposition 3.4]{Sat99}, we have
\begin{align}\label{dexpan}
    q_{h_n}(x,y;\al,\lam)=e^{-\lam h_n}&\Bigg(p_{h_n}(x,y;\al)+\sum_{i=1}^\infty q_{h_n}^{(i)}(x,y;\al)\Bigg),
\end{align}
where 
\begin{align}\label{tdecom}
    q_{h_n}^{(1)}(x,y;\al,\lam)=\lam\int_0^{h_n}\int\int p_{\tau_1}(x,x_1;\al) F_\theta(x_2) p_{h_n-\tau_1}(x_1+x_2, y; \al) dx_1dx_2d\tau_1
\end{align}
and for each $i\geq2$,
\begin{align*}
    &q_{h_n}^{(i)}(x,y;\al,\lam)\\
    &=\lam^i\int_0^{h_n}\int_{0}^{\tau_i}\dots\int_{0}^{\tau_2}\int\dots\int p_{\tau_1}(x,x_1;\al) F_\theta(x_2)\dots p_{h_n-\tau_i}(x_{2i-1}+x_{2i}, y; \al)\left(\prod_{l=1}^{2i} dx_l\right)\left(\prod_{l=1}^{i} d\tau_l\right).
\end{align*}



\begin{Rem}
    \cite{OgiUeh23} also uses the decomposition \eqref{tdecom} to show local asymptotic normality.
    In the paper, it suffices to observe the total variation norm between the original probability measure based on the transition density and its approximated one constructed by $p_t(x,y;\al)$ and $q^{(1)}(x,y;\al,\lam)$.
    However, since we will evaluate the expectation of the log likelihood function below, we need more precise estimates on 
    $$\bar{q}^{(l)}_{h_n}(x,y;\al,\lam)=e^{-\lam h_n}\sum_{i=l}^\infty q_{h_n}^{(i)}(x,y;\al).$$
    for $l\geq 2$.
\end{Rem}

The next theorem gives the exponentially decaying upper bound of $q_{h_n}^{(i)}(x,y;\al)$.
\begin{Thm}\label{ktesti}
Under Assumptions \ref{inidist}-\ref{balance-cond}, there exist positive constants $C$, $\zeta<1$ and $N\in\mbbn$ such that for all $x,y\in\mbbr^d$, $k\in\mbbn$ and $n\geq N$, 
\begin{align}
    \sup_{\al\in\Theta}\sup_{\lam\in(\underline{\lam},\bar{\lam})}q_{h_n}^{(k)}(x,y;\al,\lam)\leq (Ch_n)^k\exp\left(-u\zeta||y-x||_1\right).
\end{align}
\end{Thm}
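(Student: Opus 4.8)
The plan is to prove the estimate by induction on $k$, exploiting the recursive structure of the $k$-jump contribution together with two analytic inputs: a Gaussian upper bound for the purely continuous density and finite exponential moments of both the diffusion increment and the jump law. Conditioning on the epoch of the first jump (equivalently, reorganizing \eqref{tdecom}) gives, for every $t\in(0,h_n]$,
\begin{equation*}
q_t^{(k)}(x,y;\al,\lam)=\lam\int_0^t\!\!\int\!\!\int p_s(x,x_1;\al)\,F_\theta(x_2)\,q_{t-s}^{(k-1)}(x_1+x_2,y;\al,\lam)\,dx_1\,dx_2\,ds,
\end{equation*}
with $q_t^{(0)}=p_t$. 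Under Assumptions \ref{coef} and \ref{nondege} the density $p_t(x,y;\al)$ obeys an Aronson-type bound $p_t(x,y;\al)\le C t^{-d/2}\exp(-|y-x|^2/(Ct))$ uniformly in $\al\in\Theta$; moreover, boundedness of the coefficients yields, for a tilt rate $c>0$, the uniform exponential-moment estimate $\int p_s(x,x';\al)e^{c\|x'-x\|_1}dx'\le Ce^{\Psi(c)s}$ with $\Psi(c)=Cc+Cc^2$, while Assumption \ref{jdist}-(2) gives $\int_{\mbbr^d}F_\theta(z)e^{c\|z\|_1}dz\le C<\infty$, both uniformly in the parameters, provided $c$ is chosen below the decay rate of $F_\theta$ (after converting the Euclidean rate to the $L_1$-rate by a factor $\sqrt d$). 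I fix such a $c$ once and for all; it will play the role of $u\zeta$ in the statement.

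For the base case $k=1$ I would insert the Aronson bound into \eqref{tdecom}, integrate out the intermediate diffusion variable $x_1$ via the Gaussian convolution property, which collapses the two Gaussian factors of widths $s$ and $h_n-s$ into a single normalized Gaussian kernel $g_{h_n}(y-x_2-x)$ of width $h_n$, and then perform the $\tau_1$-integral to extract the factor $h_n$. It remains to convolve $g_{h_n}$ against $F_\theta$; the key elementary lemma is that convolving an exponentially decaying density with a normalized Gaussian of variance $O(h_n)$ preserves exponential decay with a constant uniform in $n$, checked by the tilting bound $e^{c|w|}\int e^{-C|z|}g_{h_n}(w-z)\,dz\le \int e^{c|u|}g_{h_n}(u)\,du\le C$ for $h_n$ small. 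This yields $q_{h_n}^{(1)}(x,y)\le (Ch_n)\exp(-c\|y-x\|_1)$.

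For the inductive step I would apply the recursion with the hypothesis $q_{t-s}^{(k-1)}(x_1+x_2,y)\le (Ch_n)^{k-1}\exp(-c\|y-x_1-x_2\|_1)$ and extract the weight $\exp(-c\|y-x\|_1)$ through the reverse triangle inequality: since $\|y-x\|_1-\|y-x_1-x_2\|_1\le\|x_1-x\|_1+\|x_2\|_1$, the residual weight factorizes across the diffusion leg and the jump, so the inner double integral is at most $\exp(-c\|y-x\|_1)$ times the product of the tilted diffusion mass $Ce^{\Psi(c)s}$ and the tilted jump mass $C$. Integrating $s$ over $(0,t)$ produces one further factor $t\le h_n$, giving $q_t^{(k)}\le (Ch_n)^k\exp(-c\|y-x\|_1)$ and closing the induction; bounding $\lam\le\bar\lam$ absorbs the intensity into the constant.

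The crucial point, and the step I expect to be the main obstacle, is that the rate $c$ must not deteriorate as $k$ grows, for otherwise no single $\zeta<1$ could serve all $k$. This is precisely why the weight is distributed multiplicatively over the legs via the triangle inequality and controlled by genuine exponential moments, which are finite because the jump law has a Laplace tail and the diffusion increment is sub-Gaussian, rather than by iterating a convolution estimate that would shrink the rate at each stage. The only pointwise density singularity sits on the final diffusion leg; it is dealt with once, in the base case, through the Gaussian convolution identity, whereas in the inductive step every diffusion factor is integrated against its own output variable, rendering the $t^{-d/2}$ singularity harmless. All constants depend only on the ellipticity bounds, the sup-norms of the coefficients over the compact parameter set, and the Laplace rate of $F_\theta$, which delivers the required uniformity in $(\al,\lam)$ and the existence of $N$, every smallness-of-$h_n$ requirement holding for $n\ge N$.
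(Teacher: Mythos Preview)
Your argument is correct and follows a genuinely different route from the paper's proof.

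The paper proceeds in two stages. First it uses the Gaussian convolution identity to collapse all $k+1$ diffusion kernels arising from the Aronson bound into a single normalized Gaussian of width $h_n$; this step is done globally, not inductively. What remains is a $k$-fold integral against $\exp(-u|x_{2i}|)$, and the paper controls it via a dedicated lemma (Lemma~\ref{lkconv}) which tracks the exponential rate through a prescribed decreasing sequence $\{b_k\}$ with $b_1<1$ and $\lim b_k=\zeta>0$: at step $l+1$ the accumulated weight $e^{-ub_l|z|}$ is pulled out by the triangle inequality, leaving the integrable leg $e^{-u(1-b_l)|x_{l+1}|}$, and the rate is then weakened from $ub_l$ to $ub_{l+1}$ so that the final bound can be stated uniformly with rate $u\zeta$.

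Your approach inducts directly on the first-jump recursion. You fix once a tilt level $c$ strictly below the Laplace rate of $F_\theta$ so that both $\int F_\theta(z)e^{c\|z\|_1}dz$ and the Gaussian exponential moment $\int p_s(x,x')e^{c\|x'-x\|_1}dx'$ are finite uniformly in the parameters, and then at every inductive step you redistribute the single weight $e^{-c\|y-x\|_1}$ over the three legs by the reverse triangle inequality. The point you emphasize---that the rate never degrades---is exactly what distinguishes the two executions: in the paper the rate $ub_l$ is deliberately downgraded along the auxiliary sequence, whereas in your argument the price of taking $c<u$ is paid once at the outset. Your treatment of the base case, where the Gaussian convolution identity absorbs the $t^{-d/2}$ singularity and the subsequent convolution with $F_\theta$ yields a bounded function, coincides with the paper's handling of that step; the divergence is in the organization of the higher-$k$ estimate.

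Both arguments are sound. Yours is more economical---it dispenses with the preparatory lemma and the sequence $\{b_k\}$, and makes transparent why a single $\zeta<1$ suffices for all $k$. The paper's separation of the Gaussian and Laplace parts, on the other hand, isolates the two mechanisms cleanly and could be reused if one needed sharper control of either piece.
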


By taking the sum and using Aronson-type estimates on $p_t(x,y;\al)$, we can easily obtain the following transition density estimates:
\begin{Cor}\label{testi}
    Under Assumptions \ref{inidist}-\ref{balance-cond}, there exist positive constants $C$, $C'$ and $N\in\mbbn$ such that for all $x,y\in\mbbr^d$, $k\in\mbbn$ and $n\geq N$, 
    \begin{align}
    &\sup_{\al\in\Theta}\sup_{\lam\in(\underline{\lam},\bar{\lam})}\bar{q}^{(2)}_{h_n}(x,y;\al,\lam)\leq Ch_n^2\exp\left(-u\zeta||y-x||_1\right),\\
            &\sup_{\al\in\Theta}\sup_{\lam\in(\underline{\lam},\bar{\lam})}q_{h_n}(x,y;\al,\lam)\leq C\left[\frac{1}{h_n^{d/2}}\exp\left(-\frac{|y-x|^2}{C'h_n}\right)+h_n\exp\left(-u\zeta||y-x||_1\right)\right].
    \end{align}
\end{Cor}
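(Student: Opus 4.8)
The plan is to deduce both bounds directly from Theorem \ref{ktesti} together with the Aronson-type Gaussian upper bound for the continuous-part transition density $p_t$, so that the corollary amounts to summing a geometric series and reassembling the decomposition \eqref{dexpan}. Throughout, I would work uniformly in $\al\in\Theta$ and $\lam\in(\underline{\lam},\bar{\lam})$, since Theorem \ref{ktesti} already supplies that uniformity.

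First I would treat $\bar{q}^{(2)}_{h_n}$. By definition $\bar{q}^{(2)}_{h_n}(x,y;\al,\lam)=e^{-\lam h_n}\sum_{i=2}^\infty q_{h_n}^{(i)}(x,y;\al,\lam)$, and since $e^{-\lam h_n}\leq 1$, Theorem \ref{ktesti} gives termwise $q_{h_n}^{(i)}(x,y;\al,\lam)\leq (Ch_n)^i\exp(-u\zeta\|y-x\|_1)$. Because Assumption \ref{balance-cond} forces $h_n\to 0$ (indeed $h_n\lesssim n^{-4/7}$), I can fix $N\in\mbbn$ so that $Ch_n\leq 1/2$ for all $n\geq N$, which both guarantees convergence and gives
\begin{equation*}
\sum_{i=2}^\infty (Ch_n)^i=\frac{(Ch_n)^2}{1-Ch_n}\leq 2C^2h_n^2.
\end{equation*}
Hence $\bar{q}^{(2)}_{h_n}(x,y;\al,\lam)\leq 2C^2 h_n^2\exp(-u\zeta\|y-x\|_1)$, and relabeling the constant yields the first claimed inequality for all $n\geq N$.

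For the full transition density I would start from \eqref{dexpan}, namely $q_{h_n}(x,y;\al,\lam)=e^{-\lam h_n}\big(p_{h_n}(x,y;\al)+\sum_{i=1}^\infty q_{h_n}^{(i)}(x,y;\al,\lam)\big)$, and bound the two pieces separately. For the leading term I invoke the Aronson-type upper bound, valid under the uniform ellipticity of Assumption \ref{nondege} and the boundedness and regularity of the coefficients in Assumption \ref{coef}: there is a constant $C'>0$ such that $p_t(x,y;\al)\leq C t^{-d/2}\exp(-|y-x|^2/(C't))$ uniformly in $\al$, whence $p_{h_n}(x,y;\al)\leq C h_n^{-d/2}\exp(-|y-x|^2/(C'h_n))$. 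For the jump part, the same geometric summation as above, now starting at $i=1$, gives $\sum_{i=1}^\infty q_{h_n}^{(i)}(x,y;\al,\lam)\leq \frac{Ch_n}{1-Ch_n}\exp(-u\zeta\|y-x\|_1)\leq 2Ch_n\exp(-u\zeta\|y-x\|_1)$ for $n\geq N$. Adding the two bounds, using $e^{-\lam h_n}\leq 1$, and relabeling $C$ produces the second inequality.

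The computations are routine; the only genuine input beyond Theorem \ref{ktesti} is the Aronson Gaussian upper bound for $p_t$, which I would take as standard under Assumptions \ref{coef} and \ref{nondege}. The main point requiring care is uniformity: one must ensure the Aronson constant does not depend on $\al$ (guaranteed by the uniform-in-$\al$ ellipticity and coefficient bounds) and choose $N$ large enough that $Ch_n\leq 1/2$, so that the two series converge with the stated $O(h_n^2)$ and $O(h_n)$ orders respectively. No step presents a substantial obstacle.
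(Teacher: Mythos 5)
Your proposal is correct and is exactly the argument the paper intends: the paper offers no detailed proof, stating only that the corollary follows ``by taking the sum and using Aronson-type estimates on $p_t(x,y;\al)$,'' and your geometric-series summation of the bounds from Theorem \ref{ktesti} (starting at $i=2$ and $i=1$ respectively, with $N$ chosen so $Ch_n\leq 1/2$) together with the uniform Aronson Gaussian upper bound fills in precisely those details.
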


\begin{Rem}
For jump diffusion models, \cite[Lemma 1]{AmoGlo21} provides the transition density estimates for $q_{t}(x,y;\al,\lam)$ based on \cite[Theorem 1.1]{CheHuXieZha17}, and clarifies the asymptotic property of nonparametric estimators for the invariant density function.
\cite{KohNuaTra22} also gives the transition density estimates for $q_{t}(x,y;\al,\lam)$ with a lower bound in the case where the jump distribution $F$ is a standard normal distribution or a centered Laplace distribution.
These estimates are more general than our result in the sense that they hold for any positive $t$, but if we consider high-frequency observations and look at the separated events conditioned on jump times, Theorem \ref{ktesti} and Corollary \ref{testi} will be useful.
\end{Rem}

We define the threshold-based approximating function by
\begin{align*}
     \tilde{q}_k(\alpha,\lam)&=\tilde{q}_{h_n}(z_k,z_{k-1};\al,\lam)\\
      &=e^{-\lam h_n}\Big[p_{h_n}(z_{k-1},z_k;\alpha)1_{L_n}(\Delta z_k)+q^{(1)}_{h_n}(z_{k-1},z_k;\alpha,\lam)1_{L_n^c}(\Delta z_k)\Big],
\end{align*}
where $\Delta z_k=z_k-z_{k-1}$.
Let $\pi_k(\cdot)$ be the probability distribution of $X_{t_k}$ for any $k\in\mbbn$.
Following the usual context of minimizing the Kullback-Leibler divergence, we regard the best model among the candidates as $\kl(\mbfx_n)=\Ez\Big[l_n(\mbfz_n,\tilde{\al}_n,\tilde{\lam}_n)\Big]$ for some estimators $\tilde{\al}_n=\tilde{\al}_n(\mbfx_n)$ and $\tilde{\lam}_n=\tilde{\lam}_n(\mbfx_n)$.
The following proposition clarifies the main term of the expected Kullback-Leibler divergence with either the true values or the threshold-based QMLE as an assignment. 

\begin{Prop}\label{eKL}
    Under Assumptions \ref{inidist}-\ref{indentifiability}, we have
\begin{align}
    &\label{app1}\Ez\Big[l_n(\mbfz_n,\az,\lz)\Big]=\sumk\intd\intd\log \tilde{q}_k(\al_0,\lam_0) \tilde{q}_k(\al_0,\lam_0)dz_k \pi_{k-1}(dz_{k-1})+o(1),\\
    &\label{app2}E\Bigg[\Ez\Big[l_n(\mbfz_n,\aes,\les)\Big]-\sumk\intd\intd\log \tilde{q}_k(\aes,\les) \tilde{q}_k(\al_0,\lam_0)dz_k \pi_{k-1}(dz_{k-1})\Bigg]=o(1).
\end{align}
\end{Prop}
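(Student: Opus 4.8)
The plan is to reduce both displays to sums of two-point integrals via the Markov property, and then to replace the genuine transition density $q_{h_n}$ by the threshold-based surrogate $\tilde q_k$, controlling the replacement error with the estimates of Theorem \ref{ktesti} and Corollary \ref{testi}. For \eqref{app1}, since $\mbfz_n$ is generated from the true parameters, the conditional density of $\Zk$ given $\pZk=z_{k-1}$ is exactly $q_{h_n}(z_{k-1},z_k;\az,\lz)$ and the law of $\pZk$ is $\pi_{k-1}$, so
\begin{equation*}
\Ez\big[l_n(\mbfz_n,\az,\lz)\big]=\sumk\intd\intd \log q_{h_n}(z_{k-1},z_k;\az,\lz)\,q_{h_n}(z_{k-1},z_k;\az,\lz)\,dz_k\,\pi_{k-1}(dz_{k-1}).
\end{equation*}
Writing $r_k=q_{h_n}-\tilde q_k$, I would split the per-step difference against the target integrand as $\intd\intd(\log q_{h_n}-\log\tilde q_k)\,q_{h_n}+\intd\intd\log\tilde q_k\,r_k$. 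On $L_n$ one has $r_k=e^{-\lam h_n}(q^{(1)}_{h_n}+\sum_{i\ge2}q^{(i)}_{h_n})$, while on $L_n^c$ one has $r_k=e^{-\lam h_n}(p_{h_n}+\sum_{i\ge2}q^{(i)}_{h_n})$. In the log-difference term the dominant parts of $\log q_{h_n}$ and $\log\tilde q_k$ coincide on each region — being $\log p_{h_n}$ on $L_n$ and $\log q^{(1)}_{h_n}$ on $L_n^c$ — so the ratio $q_{h_n}/\tilde q_k$ stays close to $1$ and $\log(q_{h_n}/\tilde q_k)$ is controlled by the higher-order and misclassification estimates below.

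The three error sources to bound, each after multiplying by $n$, are: (i) the higher order jump mass $\bar q^{(2)}_{h_n}=O(h_n^2)$ from Corollary \ref{testi}; (ii) the ``jump but small increment'' misclassification $\int_{L_n}e^{-\lam h_n}q^{(1)}_{h_n}$, whose mass is $\lesssim h_n^{\,1+(d+\gamma)\rho}$ by the single-jump formula \eqref{tdecom} together with the near-zero bound $|F_\theta(z)|1_{\{|z|\le\ep'\}}\le C|z|^\gamma$ of Assumption \ref{jdist}; and (iii) the ``no jump but large increment'' misclassification $\int_{L_n^c}e^{-\lam h_n}p_{h_n}$, which is exponentially small by the Aronson bound since $h_n^\rho\gg h_n^{1/2}$ for $\rho<1/2$. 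When these remainders meet the logarithmic factor, I would use that $|\log\tilde q_k|\lesssim \log(1/h_n)+|z_k-z_{k-1}|^2/h_n$ on $L_n$, so that the volume $|L_n|\lesssim h_n^{\rho d}$ and the exponential decay in the density estimates keep each contribution summable; the balance condition Assumption \ref{balance-cond}, and in particular part (2) which is tailored to the exponent $1+(d+\gamma)\rho$, then forces $n\times(\text{per-step error})\to0$.

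For \eqref{app2} the same Markov step applies conditionally on $\mbfx_n$ (the estimators $\aes,\les$ being $\mbfx_n$-measurable and hence independent of $\mbfz_n$), giving the representation with $\log q_{h_n}(\cdot;\aes,\les)$ integrated against the true weight $q_{h_n}(\cdot;\az,\lz)$. Splitting as before into a log-difference piece weighted by $q_{h_n}(\cdot;\az,\lz)$ and a density-difference piece $\log\tilde q_k(\aes,\les)\,r_k$, the novelty is that the logarithm is now evaluated at $(\aes,\les)$ while the weighting density sits at $(\az,\lz)$. These genuinely two-parameter integrals are exactly those controlled by Lemma \ref{momratio} and Lemma \ref{derpar}, which rest on the integrability exponent $\xi>0$ of Assumption \ref{jdist}-(1) and on the truncation $\varphi_n$ of Assumption \ref{jsupp}; together they bound moments of $\log F_{\hat\theta_n}(\Delta z_k)$ against $F_{\tz}$ even where $F_{\hat\theta_n}$ degenerates. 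Finally I would take the outer expectation $E$ and invoke the moment convergence of $\hat u_n$ to $N(0,\Gam^{-1})$ to show that the random fluctuation of $(\aes,\les)$ about $(\az,\lz)$ contributes only $o(1)$, for instance via a Taylor expansion of $\log\tilde q_k$ in the parameter combined with uniform integrability.

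The main obstacle will be step \eqref{app2}: controlling the two-parameter integrals uniformly in the random argument $(\aes,\les)$, since $\log\tilde q_k(\aes,\les)$ can blow up on the event where $F_{\hat\theta_n}$ is small but $F_{\tz}$ is not. Reconciling this blow-up with the outer expectation — interchanging $E$ with the density estimates and transferring the moment convergence of the estimator through the singular logarithm — is where Assumption \ref{jdist}-(1) and the truncation of Assumption \ref{jsupp} do the decisive work, and is the most delicate part of the argument.
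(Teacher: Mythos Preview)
Your outline captures the right architecture --- Markov conditioning, splitting into a log-difference piece and a measure-difference piece, classifying the three misclassification/higher-order errors, and for \eqref{app2} handling the two-parameter mismatch via Lemmas \ref{momratio}--\ref{derpar} plus Taylor expansion and moment convergence. This is the same route the paper takes.

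Two points where your plan diverges from, or understates, the paper's argument are worth flagging. First, your decomposition writes the per-step error as $\int(\log q_{h_n}-\log\tilde q_k)\,q_{h_n}+\int\log\tilde q_k\,r_k$, while the paper uses $\int\log q_{h_n}\,r_k+\int(\log q_{h_n}-\log\tilde q_k)\,\tilde q_k$. The paper's ordering is more convenient: with weight $\tilde q_k$ the log-difference term is bounded by $\int(q_{h_n}-\tilde q_k)$ in one line via $\log(1+x)\le x$, and the measure-difference term carries $|\log q_{h_n}|$, which is controlled from below by $q_{h_n}\ge e^{-\lam h_n}\max(p_{h_n},\bar q^{(2)}_{h_n})$ together with Corollary \ref{testi} and the inequality $-\log x\le x^{-\kappa}+C_\kappa$. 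In your ordering the measure-difference term carries $|\log\tilde q_k|$, which on $L_n^c$ equals $|\log q^{(1)}_{h_n}|$; bounding $\int_{L_n^c}|\log q^{(1)}_{h_n}|\,\bar q^{(2)}_{h_n}$ then needs a \emph{lower} bound on $q^{(1)}_{h_n}$ that the assumptions do not supply (e.g.\ when $F_\theta$ has compact support, $q^{(1)}_{h_n}$ decays like a Gaussian tail while $\bar q^{(2)}_{h_n}$ decays only exponentially). Switching to the paper's split avoids this.

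Second, for \eqref{app2} the work hidden behind ``Taylor expansion combined with uniform integrability'' is substantial. The paper first replaces the integrating measure (terms $\hat G_1,\hat G_2,\hat G_3$, each Taylor-expanded once around $\az$ and bounded via Lemma \ref{derpar} and H\"older), and then proves \eqref{negli} by expanding $\log\big(1+\bar\lam q^{(1)}_k(\hat\al)/p_k(\hat\al)\big)$ to \emph{fifth} order in $\al$ (Lemma \ref{lem:negli}); each order is decomposed so that a factor $(q^{(1)}_k)^{1/(1+\eta)}$ can be extracted and paired with the small-increment mass from \eqref{pj1c}, while the remaining factors are controlled by Lemma \ref{derpar} and the diffusion estimates of \cite{OgiUeh23}. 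The fifth order is needed precisely so that, under Assumption \ref{balance-cond}-(1), the residual $n\cdot n^{-i/2}(nh_n)^{-(5-i)/2}$ stays bounded for every mix of $\sigma$- and $\theta$-derivatives. Your sketch does not yet isolate this mechanism, and a lower-order expansion would not close.
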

We write 
\begin{align*}
    &\ic(\mbfx_n)=\mbbh_n(\aes,\les)-\mbbh_n(\az,\lz)+\Ez\Big[l_n(\mbfz_n,\az,\lz)\Big]-\dim(\Theta)-1,\\
    &\el(\mbfx_n)=\Ez\Big[l_n(\mbfz_n,\aes,\les)\Big].
\end{align*}

The next theorem shows that $\ic(\mbfx_n)$ is an asymptotically unbiased estimator of $\el(\mbfx_n)$.
\begin{Thm}\label{asmun}
    Under Assumptions \ref{inidist}-\ref{indentifiability}, we have
\begin{equation}
        E\Bigg[\ic(\mbfx_n)-\el(\mbfx_n)\Bigg]=o(1).
\end{equation}
\end{Thm}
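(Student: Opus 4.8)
The plan is to combine Proposition \ref{eKL} with a quadratic (AIC-type) expansion of both the observed and the expected threshold quasi-likelihood increments. Write $\psi=(\al,\lam)$, $\hat\psi_n=(\aes,\les)$ and $\psi_0=(\az,\lz)$, and introduce the expected threshold quasi-log-likelihood
\[
  \mbbu_n(\al,\lam)=\sumk\intd\intd\log\tilde q_k(\al,\lam)\,\tilde q_k(\az,\lz)\,dz_k\,\pi_{k-1}(dz_{k-1}).
\]
Since $\Ez\big[l_n(\mbfz_n,\az,\lz)\big]$ is nonrandom, taking $E[\cdot]$ in the definition of $\ic$ and inserting \eqref{app1} for this term gives $E[\ic(\mbfx_n)]=E[\mbbh_n(\hat\psi_n)-\mbbh_n(\psi_0)]+\mbbu_n(\psi_0)-(\dim(\Theta)+1)+o(1)$, while \eqref{app2} yields $E[\el(\mbfx_n)]=E[\mbbu_n(\hat\psi_n)]+o(1)$. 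Subtracting, the assertion reduces to
\[
  E\Big[\big(\mbbh_n(\hat\psi_n)-\mbbh_n(\psi_0)\big)-\big(\mbbu_n(\hat\psi_n)-\mbbu_n(\psi_0)\big)\Big]\longrightarrow \dim(\Theta)+1.
\]

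Next I would Taylor-expand each increment to second order in $\psi$. Let $\Phi_n$ be the diagonal normalizer with $\sqrt n$ on the $\sig$-block and $\sqrt{T_n}$ on the $(\theta,\lam)$-block, so that $\hat u_n=\Phi_n(\hat\psi_n-\psi_0)$. By consistency of the QMLE, $\hat\psi_n$ is, with probability tending to one, an interior maximizer of $\mbbh_n$, hence $\p_\psi\mbbh_n(\hat\psi_n)=0$ and a second-order expansion gives $\mbbh_n(\hat\psi_n)-\mbbh_n(\psi_0)=\tfrac12\,\hat u_n^\top\big(-\Phi_n^{-1}\p_\psi^2\mbbh_n(\bar\psi_n)\Phi_n^{-1}\big)\hat u_n$ for an intermediate point $\bar\psi_n$. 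For $\mbbu_n$ one first checks that $\psi_0$ is an approximate critical point: differentiating under the integral sign, $\p_\psi\mbbu_n(\psi_0)=\sumk\intd\intd\p_\psi\tilde q_k(\psi_0)\,dz_k\,\pi_{k-1}(dz_{k-1})$, which is negligible at the relevant scale because the $z_k$-integral of $\tilde q_k$ is, up to the controlled truncation error of Assumption \ref{jsupp}, independent of the parameter. Hence $\mbbu_n(\hat\psi_n)-\mbbu_n(\psi_0)=\tfrac12\,\hat u_n^\top\big(\Phi_n^{-1}\p_\psi^2\mbbu_n(\psi_0)\Phi_n^{-1}\big)\hat u_n+o_P(1)$.

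The decisive step is the convergence of both normalized Hessians to the \emph{same} limit: $-\Phi_n^{-1}\p_\psi^2\mbbh_n(\psi_0)\Phi_n^{-1}\cip\Gam$, which is precisely the information matrix governing the asymptotic normality of $\hat u_n$ recalled above, and, by differentiating the transition-density expansion \eqref{dexpan} twice and applying the ergodic theorem \eqref{ergthm}, $-\Phi_n^{-1}\p_\psi^2\mbbu_n(\psi_0)\Phi_n^{-1}\cip\Gam$ as well; replacing $\bar\psi_n$ by $\psi_0$ is justified by consistency and continuity. This ``information equality'' reflects the asymptotic efficiency of the threshold quasi-likelihood. Combining the two expansions,
\[
  \big(\mbbh_n(\hat\psi_n)-\mbbh_n(\psi_0)\big)-\big(\mbbu_n(\hat\psi_n)-\mbbu_n(\psi_0)\big)=\hat u_n^\top\Gam\,\hat u_n+o_P(1),
\]
and the moment convergence of $\hat u_n$ applied to the continuous polynomial-growth map $u\mapsto u^\top\Gam u$ gives $E[\hat u_n^\top\Gam\hat u_n]\to \Tr(\Gam\Gam^{-1})=\dim(\Theta)+1$, which is the required limit.

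The main obstacle is twofold. First, proving $-\Phi_n^{-1}\p_\psi^2\mbbu_n(\psi_0)\Phi_n^{-1}\cip\Gam$ requires differentiating twice through the series \eqref{dexpan} under the truncation $\vp_n$ and the threshold $L_n$; here the exponentially decaying bounds of Theorem \ref{ktesti} and Corollary \ref{testi}, together with Assumptions \ref{jdist} and \ref{jsupp}, are what legitimize the exchange of differentiation and integration and show that the higher-order jump contributions collected in $\bar q^{(2)}_{h_n}$ are only $o(1)$. Second, all of the $o_P(1)$ remainders (the third-order Taylor terms and the Hessian-replacement errors) must be upgraded to convergence in $L^1$ so that they survive the outer expectation $E[\cdot]$; this is handled by the moment convergence of $\hat u_n$ combined with polynomial-growth moment bounds on the derivatives of $\mbbh_n$ and $\mbbu_n$, which yield the uniform integrability of the relevant quadratic forms.
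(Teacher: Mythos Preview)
Your architecture is correct and coincides with the paper's: reduce via Proposition~\ref{eKL} to the approximating quantity $\mbbu_n$, Taylor-expand both $\mbbh_n(\hat\psi_n)-\mbbh_n(\psi_0)$ and $\mbbu_n(\hat\psi_n)-\mbbu_n(\psi_0)$ to second order, identify each quadratic form with $\Gam$, and finish with the moment convergence of $\hat u_n$. The paper organises the same computation a little differently---it peels off the $\lambda$-contributions into explicit scalars $\tilde b_{1,\lam},\tilde b_{2,\lam}$ and then splits the remaining Hessian of $\mbbu_n$ into a continuous piece $\mbbu_{1,n}$ (Lemma~\ref{Lestu1}) and a one-jump piece $\mbbu_{2,n}$ (Lemma~\ref{Lestu2}), with the counts $\tfrac12 p_\sig+\tfrac12\Tr(\Gam_{\theta,1}\Gam_\theta^{-1})$ and $\tfrac12\Tr(\Gam_{\theta,2}\Gam_\theta^{-1})$ reassembling into $\tfrac12\dim\Theta$; together with Lemma~\ref{BQL} and the $\lambda$-lemma this gives $\dim\Theta+1$ exactly as in your last display.

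The one place where your sketch is genuinely too quick is the vanishing of the first-order term in $\mbbu_n$. Your argument (``the $z_k$-integral of $\tilde q_k$ is parameter-free'') yields only $\int_{\mbbr^d}\p_\psi\tilde q_k(\psi_0)\,dz_k=0$; what enters $\p_\psi\mbbu_n(\psi_0)$ are the \emph{thresholded} integrals $\int\p_\al p_k\,1_{L_n}$ and $\int\p_\al q_k^{(1)}\,1_{L_n^c}$ separately, and these are not zero. The paper kills the $p_k$-piece via the Malliavin/Gobet representation $\int\p_\al p_k=E[\del(\cdot)]=0$ together with the exponential smallness of $\int p_k 1_{L_n^c}$, and controls the $q_k^{(1)}$-piece by \eqref{dpj1c}, which is where the balance condition Assumption~\ref{balance-cond}(2) is actually used. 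Likewise, your ``information equality'' $-\Phi_n^{-1}\p_\psi^2\mbbu_n(\psi_0)\Phi_n^{-1}\to\Gam$ is not a formal corollary of efficiency or of the ergodic theorem; it rests on the pointwise approximations of $\p_\al\log p_k(\az)$ and $\p_\theta\log q_k^{(1)}(\az)$ by their explicit leading terms (the Gaussian score, resp.\ $\p_\theta\log F_{\tz}$), which are precisely Lemma~D.1 and Proposition~5.4 of \cite{OgiUeh23} invoked inside Lemmas~\ref{Lestu1}--\ref{Lestu2}. So your two ``obstacles'' are exactly right, and the paper's lemmas are where that work is carried out.
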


Although we cannot directly calculate $\ic(\mbfx_n)$ only by observations, the quantities $\mbbh_n(\az,\lz)$, and $\Ez\Big[l_n(\mbfz_n,\az,\lz)\Big]$ do not depend on the candidate models under the nested condition given in the introduction.
Hence, following the conventional AIC, we define our predictive information criterion (PIC) by
\begin{equation}
\text{PIC}=-2\mbbh_n(\aes,\les)+2\dim(\Theta),
\end{equation}
and we select the model which minimizes PIC.

In the following, we consider the relative comparison of two candidate models:
\begin{align*}
&\mcm: dX_t=a(X_t,\theta)dt+b(X_t,\sig)dw_t+\int zN_{\lam,\theta}(dt,dz),\\
&\mcm_\star: dX_t=a(X_t,\theta^{(\star)})dt+b(X_t,\sig^{(\star)})dw_t+\int zN_{\lam,\theta^{(\star)}}(dt,dz).
\end{align*} 
We assume an additional nested condition between $\mcm$ and $\mcm_\star$: $$p_\theta:=\dim(\Theta_\theta)>\dim(\Theta_{\theta^{(\star)}})=:p_{\theta^{(\star)}}, \ p_\sig:=\dim(\Theta_\sig)>\dim(\Theta_{\sig^{(\star)}})=:p_{\sig^{(\star)}},$$
and there exist vectors $c_\theta\in\mbbr^{p_\theta}$ and $c_\sig\in\mbbr^{p_\sig}$, and matrices $F_\theta\in\mbbr^{p_\theta\times p_{\theta^{(\star)}}}$ and $F_\sig\in\mbbr^{p_\sig\times p_{\sig^{(\star)}}}$ with $F_\theta^\top F_\theta=I_{p_{\theta^{(\star)}}}$ and $F_\sig^\top F_\sig=I_{p_{\sig^{(\star)}}}$ such that for any $\al^{(\star)}\in\Theta_{\sig^{(\star)}}\times \Theta_{\theta^{(\star)}}$,
$$\mbbh^{(\mcm_\star)}_n(\al^{(\star)},\lam)=\mbbh^{(\mcm)}_n(F\al^{(\star)}+c,\lam),$$
where $F=\diag\{F_\theta,F_\sig\}\in\mbbr^{p_\mcm\times p_{{\mcm}_\star}}$, $c=(c_\theta,c_\sig)$,
$\mbbh^{(\mcm)}_n(\cdot,\cdot)$ stands for the quasi-likelihood function of the statistical model $\mcm$, $p_\mcm=p_\theta+p_\sig$ and $p_{\mcm_\star}=p_{\theta^{(\star)}}+p_{\sig^{(\star)}}$.

\begin{Thm}\label{relpro}
    Suppose that Assumptions \ref{inidist}-\ref{indentifiability} hold for both statistical models $\mcm$ and $\mcm_\star$.
    Then we have
\begin{align*}
&\lim_{n\to\infty}P(\text{PIC}_{\mcm_\star}-\text{PIC}_{\mcm}>0)=P(\chi^2(p_{\mcm}-p_{\mcm_\star})>2(p_{\mcm}-p_{\mcm_\star}))>0,
\end{align*}
where $\chi^2(p_{\mcm}-p_{\mcm_\star})$ denotes the chi-square random variable with $p_{\mcm}-p_{\mcm_\star}$ degrees of freedom.
\end{Thm}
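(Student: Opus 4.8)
The plan is to recognize $\text{PIC}_{\mcm_\star}-\text{PIC}_{\mcm}$ as a quasi-likelihood ratio statistic shifted by the penalty difference, and then to run a Wilks-type argument adapted to the mixed convergence rates of the estimator. First I would write out the difference directly from the definition of PIC,
\begin{equation*}
\text{PIC}_{\mcm_\star}-\text{PIC}_{\mcm}=2\Big(\mbbh^{(\mcm)}_n(\aes,\les)-\mbbh^{(\mcm_\star)}_n(\hat\al^{(\star)}_n,\hat\lam^{(\star)}_n)\Big)-2(p_\mcm-p_{\mcm_\star}),
\end{equation*}
where the penalty terms differ by $2(p_\mcm-p_{\mcm_\star})$ because $\lam$ is common to both models and therefore cancels. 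By the reparametrization identity $\mbbh^{(\mcm_\star)}_n(\al^{(\star)},\lam)=\mbbh^{(\mcm)}_n(F\al^{(\star)}+c,\lam)$, the maximizer of $\mbbh^{(\mcm_\star)}_n$ is exactly the maximizer of $\mbbh^{(\mcm)}_n$ over the affine subspace $\{F\al^{(\star)}+c\}$, so the bracketed term is the difference between the unconstrained and the affinely constrained maxima of the single contrast $\mbbh^{(\mcm)}_n$. Writing $\bar F=\diag\{F,1\}$ for the embedding augmented in the shared $\lam$-coordinate, one has $\bar F^\top\bar F=I$ from $F_\theta^\top F_\theta=I$ and $F_\sig^\top F_\sig=I$.

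Next I would invoke the quasi-likelihood analysis underlying the asymptotic normality and moment convergence quoted above. With $\beta=(\sig,\theta^\star)$, $\beta_0=(\sig_0,\theta_0^\star)$ and $D_n=\diag(\sqrt n I_{p_\sig},\sqrt{T_n}I_{p_\theta+1})$, the framework of \cite{ShiYos06} and \cite{OgiYos11} yields the LAN-type expansion
\begin{equation*}
\mbbh^{(\mcm)}_n(\beta_0+D_n^{-1}u)-\mbbh^{(\mcm)}_n(\beta_0)=\Delta_n^\top u-\frac{1}{2} u^\top\Gam u+o_p(1),
\end{equation*}
uniformly on compacts in $u$, with score $\Delta_n\cil N(0,\Gam)$, together with the $D_n$-localization of $\hat u_n=D_n(\hat\beta_n-\beta_0)$ supplied by the moment convergence. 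Since the true model is contained in $\mcm_\star$, $\beta_0$ lies in the constrained set, and both the unconstrained and the constrained optimizations expand around the same $\beta_0$ with the same $\Delta_n$ and $\Gam$. Maximizing the quadratic over all $u$ gives the unconstrained value $\frac{1}{2}\Delta_n^\top\Gam^{-1}\Delta_n+o_p(1)$. For the constraint, the local parameter satisfies $u=D_n\bar F (D_n^{(\star)})^{-1}v=\bar F v$ with $D_n^{(\star)}=\diag(\sqrt n I_{p_{\sig^{(\star)}}},\sqrt{T_n}I_{p_{\theta^{(\star)}}+1})$; the last equality is the crucial point, since the scaling $D_n$ is constant (rate $\sqrt n$) on the $\sig$-block and constant (rate $\sqrt{T_n}$) on the $\theta^\star$-block, and $\bar F$ is block-diagonal within these two groups, so $D_n$ commutes with $\bar F$. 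Maximizing $\Delta_n^\top\bar F v-\frac{1}{2} v^\top\bar F^\top\Gam\bar F v$ over $v$ then gives the constrained value $\frac{1}{2}\Delta_n^\top\bar F(\bar F^\top\Gam\bar F)^{-1}\bar F^\top\Delta_n+o_p(1)$.

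Subtracting, the quasi-likelihood ratio statistic converges to $\xi^\top(I-P)\xi$, where $\xi=\Gam^{-1/2}\Delta_n\cil N(0,I)$ and $P=\Gam^{1/2}\bar F(\bar F^\top\Gam\bar F)^{-1}\bar F^\top\Gam^{1/2}$. A direct check shows $P^\top=P$ and $P^2=P$, so $P$ is the orthogonal projection onto $\text{range}(\Gam^{1/2}\bar F)$, which has dimension $p_{\mcm_\star}+1$; hence $I-P$ projects onto a space of dimension $(p_\mcm+1)-(p_{\mcm_\star}+1)=p_\mcm-p_{\mcm_\star}$, so $\xi^\top(I-P)\xi\cil\chi^2(p_\mcm-p_{\mcm_\star})$ by the continuous mapping theorem. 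Therefore $\text{PIC}_{\mcm_\star}-\text{PIC}_{\mcm}\cil\chi^2(p_\mcm-p_{\mcm_\star})-2(p_\mcm-p_{\mcm_\star})$, and since $2(p_\mcm-p_{\mcm_\star})$ is a continuity point of the limit law,
\begin{equation*}
P(\text{PIC}_{\mcm_\star}-\text{PIC}_{\mcm}>0)\to P\big(\chi^2(p_\mcm-p_{\mcm_\star})>2(p_\mcm-p_{\mcm_\star})\big),
\end{equation*}
which is strictly positive because the chi-square law has full support on $(0,\infty)$.

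The main obstacle is the mixed-rate scaling: unlike the classical Wilks setting, the diffusion coordinate converges at rate $\sqrt n$ while the drift and jump coordinates converge at rate $\sqrt{T_n}=\sqrt{nh_n}$, so a priori the constrained optimization could produce a rate-dependent, non-projection quadratic form. The argument succeeds precisely because the embedding $F=\diag\{F_\theta,F_\sig\}$ respects the same $\sig$-versus-$\theta^\star$ block split as $D_n$, forcing $D_n$ and $\bar F$ to commute; verifying this commutation, and checking that the restricted estimator is $D_n^{(\star)}$-localized so the uniform-on-compacts expansion applies to it as well, is the technical heart. The remaining care is to confirm that the $o_p(1)$ remainders in the two expansions are genuinely negligible after subtraction, which follows from the polynomial-type large deviation and moment-convergence estimates already used to establish the quoted asymptotic normality.
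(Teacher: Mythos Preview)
Your proposal is correct and follows essentially the same Wilks-type projection argument as the paper: both reduce the PIC difference to a quasi-likelihood ratio, exploit the block-diagonal structure $F=\diag\{F_\theta,F_\sig\}$ so that the mixed rates $\sqrt n$ and $\sqrt{T_n}$ commute with the embedding, and identify the limit as $\xi^\top(I-P)\xi$ with $P=\Gam^{1/2}\bar F(\bar F^\top\Gam\bar F)^{-1}\bar F^\top\Gam^{1/2}$ an orthogonal projection of rank $p_{\mcm_\star}$ (plus one for $\lambda$ in your version). The only presentational difference is that the paper first separates out $\lambda$ via the $\tilde b_{1,\lambda}$ estimates and reduces to $\bar{\mbbh}_n$, then Taylor-expands around $\hat\alpha_n$ and linearizes $\sqrt n(f_\sigma(\hat\sigma_n^{(\star)})-\hat\sigma_n)$ and $\sqrt{T_n}(f_\theta(\hat\theta_n^{(\star)})-\hat\theta_n)$ explicitly, whereas you keep $\lambda$ in the parametrization and argue via the LAN-type quadratic expansion at the truth and constrained optimization; the two routes are standard equivalents and yield the same projection and trace computation.
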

 
\begin{Rem}
    This theorem indicates that our PIC asymptotically selects a redundant model with positive probability as the conventional AIC.
\end{Rem}

\begin{Rem}
    When the asymptotic variance of the estimator takes a sandwich form, the chi-square random variable in the last probability would be a non-central chi-square random variable (cf. Fasen and Kimming (2017) and Eguchi and Masuda (2023)).
\end{Rem}

\begin{Rem}\label{missprob}
We consider the situation where the statistical model $\mcm_{miss}$:
\begin{equation*}
    \mcm_{miss}: dX_t=\bar{a}(X_t,\theta)+\bar{b}(X_t,\sig)dw_t+\int z \bar{N}_{\lam,\theta}(dt,dz),
\end{equation*}
is (semi-)misspecified, that is, for any $\theta$,  
$\int_{\bar{a}(x,\sig)\neq A(x)}\pi_0(dx)>0,$
or if for any $\sig$, 
$\int_{\bar{b}(x,\sig)\neq B(x)}\pi_0(dx)>0,$
or if for any $\theta$, 
$\int_{F_\theta(z)\neq F(z)}F(z)dz>0$.
For such a model, the asymptotic behavior of the threshold-based QMLE has not been not investigated yet.
However, we can formally use our PIC for model comparison as a matter of course, and its validity is ensured by the fact that the probability of selecting the above misspecified model tends to zero:
\begin{align}
    \label{probmiss}\lim_{n\to\infty} P(PIC_{\mcm}-PIC_{\mcm_{miss}}>0)=0.
\end{align} 
The outline of the proof of \eqref{probmiss} is as follows.
Let $S(x)=(B(x))^{\otimes 2}$ and $\bar{S}(x,\sig)=(\bar{b}(x,\sig))^{\otimes2}$.
Introduce
\begin{align*}
    &\bar{Y}_1(\sig)=-\frac{1}{2}\int \Tr\big(\bar{S}^{-1}(x,\sig)S(x)\big)\pi_0(dx)-\frac{1}{2}\int\log\det \bar{S}(x,\sig)\pi_0(dx),\\
    &\bar{Y}_2(\theta)=-\frac{1}{2}\int \bar{S}^{-1}(x,\sig_{opt})\Big[\big(\bar{a}(x,\theta)-A(x)\big)^{\otimes2}\Big]\pi_0(dx)+\int \log \bar{F}_\theta(z)F(z)dz.
\end{align*}
Here, $\sig_{opt}$ is a maximizer of $\bar{Y}_1$, and we also use $\theta_{opt}$ as a maximizer of $\bar{Y}_2$, and both $\bar{Y}_1$ and $\bar{Y}_2$ are the probability limit of the quasi-likelihood function.
Applying the standard M-estimation theory and the estimates in \cite{ShiYos06} and \cite{OgiYos11}, we can show that under suitable regularity and identifiability conditions, the threshold-based QMLE has the consistency: $\aes\cip\al_{opt}=(\theta_{opt},\sig_{opt})$.
Notice that under the (semi-)misspecification condition, we have 
$$\bar{Y}_1(\sig_{opt})+\frac{1}{2}d+\frac{1}{2}\int\log\det S(x,\sig_0)\pi_0(dx)<Y_1(\sig_0),$$
or $\bar{Y}_2(\theta_{opt})+\int \log F(z)F(z)dz<Y_2(\tz)$.
Hence, by dividing the difference $PIC_{\mcm}-PIC_{\mcm_{miss}}$ by $n$ or $nh_n$ before taking the limit, we obtain \eqref{probmiss}.
\end{Rem}

\section{Numerical Experiment}\label{Numerical}
In this section, we suppose that the data generating process is given by:
\begin{equation}\label{simmod}
    dX_t=-\frac{X_t^2}{1+X_t^2}dt+\frac{3+2X_t^2}{1+X_t^2}dw_t+dZ_t,
\end{equation}
where $Z_t$ is the compound Poisson process whose intensity is $1$, and jump distribution is the Laplace distribution whose probability density function is
$$f(z)=\frac{1}{4}\exp\Bigg[-\frac{|z|}{2}\Bigg].$$

We independently simulate $1000$ paths from \eqref{simmod} for $(n,T_n)=(1000,10), (5000, 50),(10000,100)$.
For the observations from \eqref{simmod}, we consider the  following candidate coefficients:
\begin{align*}
    &\text{Drift 1}: a_1(x,\theta_1)=\frac{\theta_{11}x^2+\theta_{12}x+\theta_{13}}{1+x^2}; \ \text{Drift 2}: a_2(x,\theta_2)=\frac{\theta_{21}x^2+\theta_{22}}{1+x^2}; \\
    &\textbf{Drift 3}: a_3(x,\theta_3)=\frac{\theta_{31}x^2}{1+x^2},\\
    &\text{Diffusion 1}: b_1(x,\beta_1)=\frac{\beta_{11}x^2+\beta_{12}x+\beta_{13}}{1+x^2}; \ \textbf{Diffusion 2}: b_2(x,\beta_2)=\frac{\beta_{21}x^2+\beta_{22}}{1+x^2},\\
    &\text{Jump 1}: F_{1,\theta'_1}(z)=\frac{1}{\sqrt{2\pi}\theta'_{12}}\exp\Bigg[-\frac{(z-\theta'_{11})^2}{2(\theta'_{12})^2}\Bigg],\\
    &\textbf{Jump 2}: F_{2,\theta'_2}(z)=\frac{1}{2 \theta'_{22}}\exp\Bigg[-\frac{|z-\theta'_{21}|}{2 \theta'_{22}}\Bigg]
    \end{align*}
We set the threshold as $h^{2/5}$ and see the performance of the proposed PIC for each case.
In this case, since the parameters in the drift and jump components are completely separated, the selection results are shown separately in Table \ref{tab1} and Table \ref{tab2}.
As can be seen in Table \ref{tab1}, the proposed PIC selects redundant models to some extent, and this result confirms Theorem \ref{relpro} from the numerical point of view.
In addition, since Jump 1 is misspecified, as is stated in Remark \ref{missprob}, the number of selecting Jump1 decreases as $T_n$ increases and $h_n$ decreases.

\begin{table}
    \centering
    \begin{tabular}{ccccc}\hline
         $T_n$& $h_n$ &  & Diffusion 1 & Diffusion 2\\\hline
        30 & 0.05 & Drift 1 &3 & 37\\
        &&Drift 2& 4 & 86 \\
        &&Drift 3& 21 &\textbf{849} \\
        50 & 0.025 & Drift 1 & 1&30 \\
        &&Drift 2& 2 &76 \\
        &&Drift 3& 23 &\textbf{868} \\
        100 & 0.01 & Drift 1 & 2 & 30\\
        &&Drift 2& 3 & 84\\
        &&Drift 3& 34 & \textbf{847}\\\hline
    \end{tabular}
    \caption{Selection result for the drift and diffusion coefficients}
    \label{tab1}
\end{table}

\begin{table}
    \centering
    \begin{tabular}{cccc}\hline
         $T_n$& $h_n$  & Jump 1 & Jump 2\\\hline
        30&0.05& 173 & \textbf{827}\\
        50 & 0.025 & 141 & \textbf{859}\\
        100 & 0.01 & 49  & \textbf{951}\\\hline
    \end{tabular}
    \caption{Selection result for the jump density}
    \label{tab2}
\end{table}

\section{Appendix}\label{Appendix}
In this section, we present the proof of our main results.
In the proofs, $C$ also denotes a generic positive constant that possibly varies from line to line.

\subsection{Proof of Theorem \ref{ktesti}}


Before proceeding to the proof of Theorem \ref{ktesti}, we prepare some lemmas below.

\begin{Lem}
    Let $\{a_n\}$ be a positive and decreasing sequence with $a_n\to0$.
    Then, for all $u>0$, there exist positive constants $C$ and $N\in \mbbn$ such that for all $z\in\mbbr$ and $n\geq N$,
    \begin{equation}
        \int_\mbbr a_n^{-1/2}\exp(-u|x|)\exp\left(-\frac{|x-z|^2}{a_n}\right)dx\leq C \exp(-u|z|).
    \end{equation}
\end{Lem}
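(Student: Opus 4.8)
The plan is to reduce the estimate to a pure Gaussian convolution computation by absorbing the scale $a_n$ into a change of variables, after which the exponential decay is transferred from $x$ to $z$ via the triangle inequality. First I would substitute $x=z+\sqrt{a_n}\,t$, so that $dx=\sqrt{a_n}\,dt$ and the prefactor $a_n^{-1/2}$ cancels exactly against the Jacobian. The integral then becomes
\begin{equation*}
\int_\mbbr a_n^{-1/2}\exp(-u|x|)\exp\left(-\frac{|x-z|^2}{a_n}\right)dx
=\int_\mbbr \exp\big(-u|z+\sqrt{a_n}\,t|\big)\exp(-t^2)\,dt,
\end{equation*}
which has the virtue that the Gaussian weight $\exp(-t^2)$ is now completely free of $n$ and $z$.

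Next I would apply the reverse triangle inequality $|z+\sqrt{a_n}\,t|\geq |z|-\sqrt{a_n}\,|t|$, giving $-u|z+\sqrt{a_n}\,t|\leq -u|z|+u\sqrt{a_n}\,|t|$, so that the factor $\exp(-u|z|)$ can be pulled out of the integral uniformly in $z$:
\begin{equation*}
\int_\mbbr \exp\big(-u|z+\sqrt{a_n}\,t|\big)\exp(-t^2)\,dt
\leq \exp(-u|z|)\int_\mbbr \exp\big(u\sqrt{a_n}\,|t|-t^2\big)\,dt.
\end{equation*}
It remains only to bound the residual integral uniformly in $n$. Since $\{a_n\}$ is decreasing with $a_n\to0$, there exists $N\in\mbbn$ such that $\sqrt{a_n}\leq 1$ for all $n\geq N$; for such $n$ the integrand is dominated by $\exp(u|t|-t^2)$, whence
\begin{equation*}
\int_\mbbr \exp\big(u\sqrt{a_n}\,|t|-t^2\big)\,dt\leq \int_\mbbr \exp\big(u|t|-t^2\big)\,dt=:C<\infty,
\end{equation*}
the finiteness being immediate because the quadratic term dominates the linear one. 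Setting this $C$ as the constant and the above $N$ as the threshold completes the argument, with both constants depending only on $u$.

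There is no genuine obstacle here; the only points requiring a little care are the two uniformities. Uniformity in $z$ is automatic, since after the triangle-inequality step the remaining integral no longer involves $z$. Uniformity in $n$ (for $n\geq N$) is exactly what the hypothesis $a_n\to0$ delivers, via the monotone domination $\exp(u\sqrt{a_n}|t|-t^2)\leq\exp(u|t|-t^2)$. I would note in passing that the one-dimensional statement is all that is needed, but the same substitution-and-triangle-inequality scheme tensorizes coordinatewise, which is presumably why this lemma is isolated for later use in the $d$-dimensional Aronson-type bounds.
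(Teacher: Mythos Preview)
Your proof is correct and considerably more economical than the paper's. The paper proceeds by first splitting the integral according to the sign of $x$, completing the square in each half to isolate a factor $\exp(\pm uz)$, and then performing a change of variables that turns the half-line Gaussian integrals into error-function-type tails with shifted lower limits $-a_n^{-1/2}(z\mp ua_n)$. It then partitions $z$ into three regions $D_{1,n},D_{2,n},D_{3,n}$ according to where this shifted limit lies relative to $0$ and $1$, and handles each case separately, invoking the Mills-ratio bound $\int_y^\infty e^{-v^2}\,dv\le e^{-y^2}$ for $y\ge1$ together with the elementary inequalities $|x+y|^2\ge |x|^2/2-|y|^2$ and $|xy|^2\ge 2|x||y|-1$.

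Your route bypasses all of this: the substitution $x=z+\sqrt{a_n}\,t$ absorbs the scale in one stroke, and the reverse triangle inequality transfers the decay to $\exp(-u|z|)$ uniformly in $z$ without any case analysis. What the paper's approach buys is perhaps a slightly more explicit constant (tracked region by region), but for the purposes of the lemma your argument delivers exactly what is needed with far less work. Your closing remark about tensorization is also apt: the paper's subsequent lemma (Lemma~\ref{lkconv}) iterates this estimate, and your form of the bound is just as amenable to that induction.
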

\begin{proof}
By the change of variables, we have
\begin{align*}
    &\int_\mbbr a_n^{-1/2}\exp(-u|x|)\exp\left(-\frac{|x-z|^2}{a_n}\right)dx\\
    &=a_n^{-1/2}\exp(u^2a_n/2)\Bigg[\exp(-uz)\int_{0}^\infty\exp\left(-\frac{|x-(z-ua_n)|^2}{a_n}\right) dx\\ &\qquad\qquad\qquad\qquad\qquad+\exp(uz)\int_{-\infty}^0\exp\left(-\frac{|x-(z+ua_n)|^2}{a_n}\right) dx\Bigg]\\
    &=\exp(u^2a_n/2)\Bigg[\exp(-uz)\int_{-a_n^{-1/2}(z-ua_n)}^\infty\exp\left(-|v|^2\right) dv\\ &\qquad\qquad\qquad\qquad\qquad+\exp(uz)\int_{-\infty}^{-a_n^{-1/2}(z+ua_n)}\exp\left(-|v|^2\right) dv\Bigg].
\end{align*}
From now on, we show that there exists a positive constant $L$ satisfying
\begin{equation}\label{lapnor}
    \exp(-uz)\int_{-a_n^{-1/2}(z-ua_n)}^\infty\exp\left(-|v|^2\right) dv\leq L \exp[-u|z|],
\end{equation}
for all large enough $n\geq N$.
For notational brevity, we write
\begin{align*}
    &D_{1,n}=\{z|-a_n^{-1/2}(z-ua_n)\leq0\},\\
    &D_{2,n}=\{z|0< -a_n^{-1/2}(z-ua_n)< 1\},\\
    &D_{3,n}=\{z|-a_n^{-1/2}(z-ua_n)\geq 1\}.
\end{align*}
It is straightforward from $z1_{D_{1,n}}(z)\geq0$ that
\begin{align*}
    &\exp(-uz)\int_{-a_n^{-1/2}(z-ua_n)}^\infty\exp\left(-|v|^2\right) dv1_{D_{1,n}}(z)\lesssim \exp(-u|z|)1_{D_{1,n}}(z).
\end{align*}
Since $a_n\to0$ and
\begin{align*}
    0< -a_n^{-1/2}(z-ua_n)< 1\Leftrightarrow -a_n^{1/2}+ua_n<z<ua_n,
\end{align*}
there exists $N\in\mbbn$ such that for all $n\geq N$,
\begin{align*}
    &\max_{z\in D_{2,n}}\exp(-uz)=\exp[-u(-a_n^{1/2}+ua_n)]\leq 2,\\
    &\min_{z\in D_{2,n}}\exp(-u|z|)=\exp(-u^2a_n)\wedge \exp[u(-a_n^{1/2}+ua_n)]\geq\frac{1}{2}.
\end{align*}
This leads to
\begin{align*}
    &\exp(-uz)\int_{-a_n^{-1/2}(z-ua_n)}^\infty\exp\left(-|v|^2\right) dv1_{D_{2,n}}(z)\\
    &\lesssim \frac{\exp(-uz)}{ \exp(-u|z|)} \exp(-u|z|)1_{D_{2,n}}(z)\\
    &\leq 4\exp(-u|z|)1_{D_{2,n}}(z).
\end{align*}
Recall that for all $y\geq1$, we have
\begin{align*}
    \int_y^\infty \exp\left(-|v|^2\right) dv\leq (2y)^{-1}\exp\left(-|y|^2\right)\leq \exp\left(-|y|^2\right).
\end{align*}
By using this estimate and elementary inequalities: $|x+y|^2\geq |x|^2/2-|y|^2$ and $|xy|^2\geq 2|x||y|-1$ for all $x,y\in\mbbr$, it follows that
\begin{align*}
    &\exp(-uz)\int_{-a_n^{-1/2}(z-ua_n)}^\infty\exp\left(-|v|^2\right) dv1_{D_{3,n}}(z)\\
    &\leq \exp(-uz)\exp\left(-|a_n^{-1/2}(z-ua_n)|^2\right)1_{D_{3,n}}(z)\\
    &\leq \exp(-uz)\exp\left(-\frac{a_n^{-1}|z|^2}{2}\right)\exp\left(u^2a_n\right)1_{D_{3,n}}(z)\\
    &\leq \exp(-uz)\exp\left(-a_n^{-1/2}|z|\right)\exp\left(u^2a_n+\frac{1}{2}\right)1_{D_{3,n}}(z).
\end{align*}
Again using $a_n\to0$, we can find a positive constant $K$ such that
$$\exp(-uz)\int_{-a_n^{-1/2}(z-ua_n)}^\infty\exp\left(-|v|^2\right) dv1_{D_{3,n}}(z)\leq K\exp(-u|z|)1_{D_{3,n}}(z),$$
for all large enough $n$.
Thus, we get \eqref{lapnor}.
In a similar way, we can also derive
\begin{align*}
    \exp(uz)\int_{-\infty}^{-a_n^{-1/2}(z+ua_n)}\exp\left(-|v|^2\right) dv\leq L\exp(-u|z|),
\end{align*}
for a suitable positive constant $L$ and all large enough $n$.
Hence, the desired result follows.

\end{proof}

Let $\{b_k\}$ be a positive and decreasing sequence with $b_1<1$ and $\lim_{k\to\infty} b_k=\zeta>0$.

\begin{Lem}\label{lkconv}
    Let $\{a_n\}$ be a positive and decreasing sequence with $a_n\to0$.
    Then, for all $u>0$, there exists an $N\in \mbbn$ such that for all $z\in\mbbr$ and $n\geq N$,
    \begin{align}\label{kconv}
        \int_\mbbr\dots\int_\mbbr a_n^{-1/2}\exp\left(-u\sum_{i=1}^k|x_i|\right)\exp\left(-\frac{|\sum_{i=1}^kx_i-z|^2}{a_n}\right)\prod_{i=1}^kdx_i\leq (C\vee 2u)^k \exp(-ub_k|z|),
    \end{align}
    where the positive constant $C$ is the same as in the previous lemma.
\end{Lem}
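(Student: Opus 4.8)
The plan is to argue by induction on $k$, abbreviating the left-hand side of \eqref{kconv} as $I_k(z)$. The structural point that makes the induction work is that the single factor $a_n^{-1/2}$ and the single Gaussian $\exp(-|\sum_{i=1}^k x_i-z|^2/a_n)$ depend on $x_1,\dots,x_{k-1}$ only through the partial sum $\sum_{i=1}^{k-1}x_i$. Isolating the variable $x_k$ and applying Fubini's theorem, and using the identity $\sum_{i=1}^{k-1}x_i-(z-x_k)=\sum_{i=1}^k x_i-z$, the inner $(k-1)$-fold integral is exactly $I_{k-1}(z-x_k)$, so that
\begin{equation*}
    I_k(z)=\int_\mbbr e^{-u|x_k|}\,I_{k-1}(z-x_k)\,dx_k .
\end{equation*}
For the base case $k=1$ the previous lemma gives $I_1(z)\le C e^{-u|z|}$ for all $n\ge N$, and since $b_1<1$ we have $e^{-u|z|}\le e^{-ub_1|z|}$, whence $I_1(z)\le(C\vee 2u)e^{-ub_1|z|}$.

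For the inductive step I would assume $I_{k-1}(w)\le(C\vee 2u)^{k-1}e^{-ub_{k-1}|w|}$ for all $w\in\mbbr$ and $n\ge N$, substitute this into the recursion, and reduce the problem to estimating a convolution of two Laplace kernels with the \emph{distinct} rates $u$ and $ub_{k-1}$:
\begin{equation*}
    I_k(z)\le(C\vee 2u)^{k-1}\int_\mbbr e^{-u|x_k|}e^{-ub_{k-1}|z-x_k|}\,dx_k .
\end{equation*}
Using the triangle inequality $|z-x_k|\ge|z|-|x_k|$ to factor out $e^{-ub_{k-1}|z|}$ (or, equivalently, evaluating the convolution explicitly as a difference of two exponentials) yields
\begin{equation*}
    \int_\mbbr e^{-u|x_k|}e^{-ub_{k-1}|z-x_k|}\,dx_k\le e^{-ub_{k-1}|z|}\int_\mbbr e^{-u(1-b_{k-1})|x_k|}\,dx_k=\frac{2}{u(1-b_{k-1})}\,e^{-ub_{k-1}|z|}.
\end{equation*}
Because $\{b_k\}$ is decreasing, the prefactor stays bounded through $b_{k-1}\le b_1<1$, and the slack $b_k<b_{k-1}$ gives $e^{-ub_{k-1}|z|}\le e^{-ub_k|z|}$; combining these closes the induction at $I_k(z)\le(C\vee 2u)^k e^{-ub_k|z|}$. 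Note also that the inductive step convolves only against the $n$-independent kernel $e^{-u|x_k|}$, so no new constraint on $n$ is introduced: the threshold $N$ is inherited entirely from the base case (the previous lemma), which is precisely why a single $N$ can serve for all $k$ in the statement.

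The delicate point is the constant bookkeeping. The genuine rate of the two-rate Laplace convolution is the slower one $ub_{k-1}$, so no polynomial correction of the type $|z|+u^{-1}$ is produced (such corrections would appear only in the equal-rate convolution); this is what keeps the argument clean, but one still must verify that the per-step multiplicative factor $2/(u(1-b_{k-1}))$ can be absorbed into $C\vee 2u$ with $C$ the same constant as in the previous lemma. The monotonicity $b_{k-1}\le b_1<1$ is exactly the hypothesis that keeps this factor away from the degenerate regime $b_{k-1}\uparrow 1$, while the requirement $b_k\downarrow\zeta>0$ guarantees that the exponential rate, although reduced at each step, converges to the strictly positive limit $\zeta$ that ultimately feeds into Theorem \ref{ktesti}. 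I expect this uniform control of the per-step constant, together with the simultaneous bookkeeping of the rate $b_k$, to be the main obstacle, whereas the Fubini reduction and the single-kernel convolution estimate are routine.
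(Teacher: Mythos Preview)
Your proof is correct and follows essentially the same route as the paper: induction on $k$, base case from the previous lemma, then isolate the last variable, apply the inductive hypothesis to $I_{k-1}(z-x_k)$, and bound the resulting two-rate Laplace convolution via the triangle inequality to pick up the factor $2/(u(1-b_{k-1}))$ and the decay $e^{-ub_{k-1}|z|}\le e^{-ub_k|z|}$. You even flag the constant bookkeeping (absorbing $2/(u(1-b_{k-1}))$ into $C\vee 2u$) more carefully than the paper, which simply writes the final inequality without comment.
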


\begin{proof}
    In the case where $k=1$, \eqref{kconv} is straightforward from the previous lemma.
    Suppose that for $k=l$, \eqref{kconv} holds true with $n\geq N_l\in\mbbn$.
    Then, for $k=l+1$ and $n\geq N_l$, the triangular inequality leads to
    \begin{align*}
        &\int_\mbbr\dots\int_\mbbr a_n^{-1/2}\exp\left(-u\sum_{i=1}^{l+1}|x_i|\right)\exp\left(-\frac{|\sum_{i=1}^{l+1}x_i-z|^2}{a_n}\right)\prod_{i=1}^{l+1}dx_i\\
        &\leq (C\vee 2u)^l\int_\mbbr \exp\left(-u|x_{l+1}|\right)\exp\left(-ub_l|x_{l+1}-z|\right)dx_{l+1}\\
        &\leq (C\vee 2u)^l \exp\left(-ub_l|z|\right)\int_\mbbr \exp\left(-u\left[1-b_l\right]|x_{l+1}|\right)dx_{l+1}\\
        &=2u(C\vee 2u)^l\left(1-b_l\right)\exp\left(-ub_l|z|\right)\\
        &\leq (C\vee 2u)^{l+1}\exp\left(-ub_{l+1}|z|\right).
    \end{align*}
    Thus, the desired result follows.
\end{proof}
We note that since $b_k\to \zeta$ as $k\to\infty$, the right-hand-side of \eqref{kconv} is bounded by $(C\vee 2u)^k \exp(-u\zeta|z|)$ for any $k$.
Now we prove Theorem \ref{ktesti}.
For simplicity, we set $d=1$.
From Aronson-type estimates,
\begin{align*}
        &q_{h_n}^{(1)}(x,y;\al)\\
        &=\lam\int_0^{h_n}\int_\mbbr\int_\mbbr p_{\tau_1}(x,x_1;\al) F_\theta(x_2) p_{h_n-\tau_1}(x_1+x_2, y; \al) dx_1dx_2d\tau_1\\
        &\lesssim \int_0^{h_n}\int_\mbbr\int_\mbbr \tau_1^{-1/2}\exp\left(-\frac{|x_1-x|^2}{C\tau_1}\right)\exp(-u|x_2|) (h_n-\tau_1)^{-1/2}\exp\left(-\frac{|(y-x_1)-x_2)|^2}{C(h_n-\tau_1)}\right)dx_1dx_2d\tau_1\\
        &=\int_0^{h_n}\int_\mbbr h_n^{-1/2}\exp(-u|x_2|)\exp\left(-\frac{|x_2-(y-x)|^2}{Ch_n}\right)dx_2d\tau_1.
\end{align*}
In the last equality, we used the fact that the sum of two independent normal random variables also obeys normal distribution whose mean and variance are the sum of the original variables.
By using a similar estimate twice, we have
\begin{align*} 
    &\int_\mbbr \int_\mbbr \prod_{l=1}^3 p_{\tau_{i+l}-\tau_{i+l-1}}(x_{2i+2l-3}+x_{2i+2l-2},x_{2i+2l-1};\al)dx_{2i+1}dx_{2i+3}\\
    &\lesssim \int_\mbbr (\tau_{i+2}-\tau_i)^{-1/2}\exp\left(-\frac{|x_{2i+3}-(x_{2i-1}+x_{2i}+x_{2i+2})]|^2}{C(\tau_{i+2}-\tau_i)}\right)p_{\tau_{i+2}}(x_{2i+3}+x_{2i+4},x_{2i+5};\al)dx_{2i+3}\\
    &\lesssim  (\tau_{i+3}-\tau_i)^{-1/2}\exp\left(-\frac{|x_{2i+5}-(x_{2i-1}+x_{2i}+x_{2i+2}+x_{2i+4})]|^2}{C(\tau_{i+3}-\tau_i)}\right).
\end{align*}
Hence, by induction, we obtain
\begin{align*}
    &\int_\mbbr\dots\int_\mbbr p_{\tau_1}(x,x_1;\al) \left(\prod_{i=1}^{k-1} p_{\tau_{i+1}-\tau_i}(x_{2i-1}+x_{2i},x_{2i+1};\al)\right)p_{h_n-\tau_k}(x_{2k-1}+x_{2k},y;\al)\prod_{i=1}^{k}dx_{2i-1}\\
    &\lesssim h_n^{-1/2}\exp\left(\frac{|\sum_{i=1}^kx_{2l}- (y-x)|^2}{Ch_n}\right).
\end{align*}
Combined with Lemma \ref{lkconv}, we have
\begin{align*}
    &q^{(k)}_{h_n}(x,y;\al,\lam)\\
    &\lesssim C h_n^k\int_\mbbr \dots\int_\mbbr h_n^{-1/2}\exp\left(-u\sum_{i=1}^k |x_{2i}|\right)\exp\left(\frac{|\sum_{i=1}^kx_{2l}- (y-x)|^2}{Ch_n}\right)\prod_{i=1}^{k}dx_{2i}\\
    &\lesssim h_n^k \exp\left(-u\zeta|y-x|\right),
\end{align*}
for all large enough $n$ and this implies the desired result for $d=1$.
Since in the multi-dimensional case the proof is quite similar to the one-dimensional case, we omit it.

\subsection{Proof of Proposition \ref{eKL}}

Recall that we write $L_n=\{ z\in \mbbr^d||z|\leq h_n^\rho\}$.
We define the threshold-based approximating function by
\begin{align*}
     \tilde{q}_k(\alpha,\lam)&=\tilde{q}_{h_n}(z_k,z_{k-1};\al,\lam)\\
      &=e^{-\lam h_n}\Big[p_{h_n}(z_{k-1},z_k;\alpha)1_{L_n}(\Delta z_k)+q^{(1)}_{h_n}(z_{k-1},z_k;\alpha,\lam)1_{L_n^c}(\Delta z_k)\Big],
\end{align*}
where $\Delta z_k=z_k-z_{k-1}$.
We also write 
$$p_k(\al)=p_{h_n}(z_{k-1},z_k;\alpha),\quad q_k^{(1)}(\al,\lam)=q_{h_n}^{(1)}(z_{k-1},z_k;\al,\lam),\quad q_k(\al,\lam)=q_{h_n}(z_{k-1},z_k;\al,\lam),$$
and by definition, $\tilde{q}_k(\al,\lam)\leq q_k(\al,\lam)$.
Let $\pi_k(\cdot)$ be the distribution of $Z_k$. 
Before we show \eqref{app1}, we prepare the following lemma:

\begin{Lem}\label{lemapp1}
    Under Assumptions \ref{inidist}-\ref{indentifiability}, we have
    \begin{align*}
    \ezn[l_n(\mbfz_n,\al_0,\lam_0)]=\sumk\intd\intd\log q_k(\al_0,\lam_0) \tilde{q}_k(\al_0,\lam_0)dz_k \pi_{k-1}(dz_{k-1})+o(1).
    \end{align*}
\end{Lem}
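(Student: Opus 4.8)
The plan is to begin from the Markov property of $\mbfz_n$. Since $q_{h_n}$ is its one-step transition density, the joint law of $(Z_{k-1},Z_k)$ is $q_k(\az,\lz)\,dz_k\,\pi_{k-1}(dz_{k-1})$, so that
\[
\ezn[l_n(\mbfz_n,\az,\lz)]=\sumk\intd\intd\log q_k(\az,\lz)\,q_k(\az,\lz)\,dz_k\,\pi_{k-1}(dz_{k-1}).
\]
Hence the assertion is equivalent to showing that
\[
R_n:=\sumk\intd\intd\log q_k(\az,\lz)\,\big(q_k(\az,\lz)-\tilde q_k(\az,\lz)\big)\,dz_k\,\pi_{k-1}(dz_{k-1})=o(1).
\]
Using the expansion \eqref{dexpan}, the definition of $\tilde q_k$, and $1-1_{L_n}=1_{L_n^c}$, I would decompose the density error as
\[
q_k-\tilde q_k=e^{-\lz h_n}\big(p_k\,1_{L_n^c}(\Delta z_k)+q_k^{(1)}\,1_{L_n}(\Delta z_k)\big)+\bar q_k^{(2)},
\]
splitting $R_n$ into three pieces whose nonnegative weights are controlled by Theorem \ref{ktesti} and Corollary \ref{testi}. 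Because those estimates are uniform in $(x,y)$ and $\pi_{k-1}$ is a probability measure, it suffices to bound $n\sup_{x}\intd|\log q_k|\,(\cdots)\,dz_k$ for each piece.

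The decisive preliminary is a two-sided control of $\log q_k(\az,\lz)$. From Corollary \ref{testi}, $q_k\lesssim h_n^{-d/2}$, so $\log q_k\lesssim|\log h_n|$ from above. The delicate direction is the lower bound, and this is where the main obstacle lies. The naive estimate $q_k\ge e^{-\lz h_n}p_k$ only yields $|\log q_k|\lesssim|\log h_n|+|z_k-z_{k-1}|^2/h_n$, and the quadratic term is fatal: for instance $h_n^{-1}\intd|z_k-z_{k-1}|^2\,\bar q_k^{(2)}\,dz_k$ is of order $h_n$, whose $n$-fold sum diverges like $T_n$. The key is therefore to prevent $|\log q_k|$ from growing like $|z_k-z_{k-1}|^2/h_n$ in the far tail, where the diffusive Gaussian lower bound is exponentially weak but the true density is not. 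I would resolve this by bounding $q_k$ below through its jump contributions, $q_k\ge e^{-\lz h_n}q_k^{(i)}$, together with a matching lower estimate of Aronson type showing $q_k^{(i)}\gtrsim h_n^{i}\exp(-C\|z_k-z_{k-1}\|_1)$ whenever the displacement is reachable by $i\lesssim 1+\|z_k-z_{k-1}\|_1$ jumps. This produces the usable bound $|\log q_k|\lesssim|\log h_n|\,(1+\|z_k-z_{k-1}\|_1)$, with at most a logarithmic loss near the diagonal; establishing (or citing) this jump-driven lower bound is the genuinely hard part of the argument.

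With this in hand the three pieces are estimated in turn, each invoking the balance condition differently. For the $p_k\,1_{L_n^c}$ piece, $\rho<1/2$ makes the Gaussian tail over $\{|\Delta z_k|>h_n^\rho\}$ of super-exponentially small order $\exp(-h_n^{-(1-2\rho)}/C)$, dominating all polynomial and logarithmic factors, so that even after multiplying by $n$ it vanishes by Assumption \ref{balance-cond}-(1). For the multiple-jump piece, $\bar q_k^{(2)}\lesssim h_n^2\exp(-u\zeta\|\cdot\|_1)$ together with the logarithmic bound on $|\log q_k|$ gives a per-step contribution of order $h_n^2|\log h_n|$, and $n h_n^2|\log h_n|\to0$ since $nh_n^2\to0$ under Assumption \ref{balance-cond}-(1). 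The sharp piece is the sub-threshold single jump $q_k^{(1)}\,1_{L_n}$: a single jump with net displacement below the threshold forces the jump to have size $O(h_n^\rho)$, so by the small-jump bound $F_\theta(z)1_{\{|z|\le\epsilon'\}}\lesssim|z|^\gamma$ in Assumption \ref{jdist}-(3) one gets $\int_{L_n}q_k^{(1)}\,dz_k\lesssim h_n^{1+(\gamma+d)\rho}$, and hence a per-step bound of order $(\log\tfrac1{h_n})^2\,h_n^{1+(\gamma+d)\rho}$. The $n$-fold sum is then of order $n(\log\tfrac1{h_n})^2\,h_n^{1+((\gamma+d)\rho)\wedge1}$, which is precisely the quantity forced to $0$ by Assumption \ref{balance-cond}-(2), the spare power $h_n^{\eta(1+((\gamma+d)\rho)\wedge1)/(1+\eta)}$ absorbing the logarithmic factor. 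Collecting the three estimates gives $R_n=o(1)$, which is the claim.
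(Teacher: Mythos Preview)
Your decomposition of $q_k-\tilde q_k$ into the three pieces $p_k 1_{L_n^c}$, $q_k^{(1)} 1_{L_n}$, and $\bar q_k^{(2)}$ is exactly the paper's $G_0,G_1,G_2$, and your treatment of the diffusive tail piece $G_0$ via Aronson plus the Gaussian tail over $\{|\Delta z_k|>h_n^\rho\}$ matches the paper.

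The gap is in your handling of $-\log q_k$ on the events $\{q_k<1\}$ for the pieces $G_1$ and $G_2$. You propose to obtain $|\log q_k|\lesssim|\log h_n|(1+\|z_k-z_{k-1}\|_1)$ from a lower bound $q_k^{(i)}\gtrsim h_n^i\exp(-C\|z_k-z_{k-1}\|_1)$. Such a lower bound is \emph{not available} under Assumptions \ref{inidist}--\ref{indentifiability}: Assumption \ref{jdist} imposes only the upper bound $F_\theta(z)\le Ce^{-C|z|}$ and regularity of $\log F_\theta$, never a pointwise lower bound, and the support of $F_\theta$ may be a proper subset of $\mbbr^d$. Consequently a global lower estimate on $q_k^{(i)}$ of the type you need cannot be derived; the paper itself remarks that existing lower bounds (e.g.\ \cite{KohNuaTra22}) are only known for specific jump laws such as Gaussian or Laplace.

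The paper bypasses this entirely with an elementary device you are missing. Instead of a uniform lower bound on $q_k$, it uses the \emph{matched} lower bound: for the piece $G_j$ one simply invokes $q_k\ge e^{-\lz h_n}\cdot(\text{the very weight appearing in }G_j)$. Thus for $G_1$ one has $-\log q_k\le -\log q_k^{(1)}+O(h_n)$, and then the inequality $-\log x\le x^{-\kappa}+C_\kappa$ for $x\in(0,1)$ and small $\kappa>0$ gives
\[
\intd(-\log q_k^{(1)})\,q_k^{(1)} 1_{L_n}\,dz_k
\le \intd (q_k^{(1)})^{1-\kappa}1_{L_n}\,dz_k + C_\kappa\intd q_k^{(1)}1_{L_n}\,dz_k,
\]
and H\"older against the Lebesgue measure of $L_n$ yields a bound $\lesssim h_n^{(1+(d+\gamma)\rho\wedge1)(1-\kappa)+d\kappa}$, which is $o(n^{-1})$ by Assumption \ref{balance-cond}-(2) for $\kappa$ small. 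The same trick works for $G_2$ with $\bar q_k^{(2)}$ in place of $q_k^{(1)}$. No lower bound on the jump density is ever needed.
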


\begin{proof}
Notice that
\begin{align}
\nn &P(N_{\tz}((t_{k-1},t_k]\times\mbbr^d)=1 \ \text{and} \ |\D_k Z|\leq h_n^\rho|Z_{t_{k-1}}=z_{k-1})\\
\nn&= \int_{\mbbr^d} q_k^{(1)}(\al_0,\lz)1_{L_n}(\D z_k)dz_k\\
\label{pj1c} &\lesssim h_n^{1+(d+\gam)\rho\wedge1}(1+|z_{k-1}|^C).
\end{align}
Then, by taking a similar route to the proof of \cite[(4.3)]{OgiUeh23}, it follows that for any $z_{k-1}\in\mbbr^d$,
\begin{align}
\label{tvd}&\int_{\mbbr^d}\int_{\mbbr^d} (q_{k}(\al_0,\lam_0)-\tilde{q}_k(\al_0,\lam_0))dx_k\pi_{k-1}(dx_{k-1})\\
    \nn&\leq\int_{\mbbr^d} \bigg[p_k(\az)1_{L_n^c}(\Delta z_k)+q_k^{(1)}(\az,\lz)1_{L_n}(\Delta z_k)+\bar{q}^{(2)}_{h_n}(z_{k-1},z_k;\al_0,\lam_0)\bigg]dz_k\pi_{k-1}(dz_{k-1})\\
    \nn&=o\Big((n\log h_n)^{-1}\Big).
\end{align}
Corollary \ref{testi} implies that for any $z_{k-1}\in\mbbr$, we have
\begin{align*}
    &\bigg|\log q_k(\al_0,\lam_0)\bigg|\\
    &= -\log q_k(\al_0,\lam_0) 1_{q_k(\al_0,\lam_0)<1} + \log q_k(\al_0,\lam_0)1_{q_k(\al_0,\lam_0)\geq1}\\
    &\leq - \log q_k(\al_0,\lam_0)1_{q_k(\al_0,\lam_0)<1} + \log \Bigg(C\left[\frac{1}{h_n^{d/2}}\exp\left(-\frac{|\D z_k|^2}{Ch_n}\right)+h_n\exp\left(-u\zeta||\D z_k||_1\right)\right]\Bigg)1_{q_k(\al_0,\lam_0)\geq1}\\
    &\leq - \log q_k(\al_0,\lam_0)1_{q_k(\al_0,\lam_0)<1}+\log C-\frac{d}{2}\log h_n,
\end{align*}
for large enough $n$.
Combined with \eqref{tvd}, we get
\begin{align*}
    &\Bigg|\ezn[l_n(\mbfz_n,\al_0,\lam_0)]-\sumk\intd\intd\log q_k(\al_0,\lam_0) \tilde{q}_k(\al_0,\lam_0)dz_k \pi_{k-1}(dz_{k-1})\Bigg|\\
    &\leq \sumk\intd\intd\bigg|\log q_k(\al_0,\lam_0)\bigg|(q_{k}(\al_0,\lam_0)-\tilde{q}_k(\al_0,\lam_0))dz_k \pi_{k-1}(dz_{k-1})\\
    &=\sumk\intd\intd- \log q_k(\al_0,\lam_0)1_{q_k(\al_0,\lam_0)<1}(q_{k}(\al_0,\lam_0)-\tilde{q}_k(\al_0,\lam_0))dz_k \pi_{k-1}(dz_{k-1})+o(1)\\
    &=:\sumk\intd \bigg(G_0(z_{k-1})+G_1(z_{k-1})+G_2(z_{k-1})\bigg)\pi_{k-1}(dz_{k-1})+o(1),
\end{align*}
where
\begin{align*}
    &G_0(z_{k-1})=e^{-\lam_0 h_n}\intd- \log q_k(\al_0,\lam_0)1_{q_k(\al_0,\lam_0)<1}p_k(\az)1_{L_n^c}(\D z_k)dz_k,\\
    &G_1(z_{k-1})=e^{-\lam_0 h_n}\intd- \log q_k(\al_0,\lam_0)1_{q_k(\al_0,\lam_0)<1}q_k^{(1)}(\az,\lam_0)1_{L_n}(\D z_k)dz_k,\\
    &G_2(z_{k-1})=e^{-\lam_0 h_n}\intd- \log q_k(\al_0,\lam_0)1_{q_k(\al_0,\lam_0)<1}\bar{q}^{(2)}_{h_n}(z_{k-1},z_k;\al_0,\lam_0)dz_k.
\end{align*}
By Aronson-type estimate, we obtain
\begin{align*}
    &G_0(z_{k-1})\\
    &\leq \int_{|\D x_k|>h_n^\rho} -\log p_k(\az)1_{q_k(\al_0,\lam_0)<1}  p_k(\az)dz_k\\
    &\lesssim \int_{|\D z_k|>h_n^\rho}\Bigg(C-\frac{d}{2}\log h_n+\frac{|\D z_k|^2}{h_n}\Bigg)h_n^{-d/2}\exp\Bigg[-\frac{|\D z_k|^2 }{Ch_n}\Bigg]dz_k\\
    &\lesssim \int_{|v|>h_n^{\rho-\frac{1}{2}}}\Bigg(C-\frac{d}{2}\log h_n+v^2\Bigg)\exp\Bigg[-\frac{|v|^2}{C}\Bigg]dv\\
    &=o(n^{-1}).
\end{align*}
Since for any $\kappa\in(0,1)$, there exists a positive constant $C_\kappa$ such that for any $x\in(0,1)$,
\begin{equation}\label{mlogineq}
    -\log x\leq x^{-\kappa}+C_{\kappa},
\end{equation}
it follows from H\"{o}lder's inequality that for small enough $\kappa>0$,
\begin{align*}
    &\intd G_1(z_{k-1})\pi_{k-1}(dz_{k-1})\\
    &\leq\intd\intd-\log q_k^{(1)}(\al_0,\lam_0)dz_k1_{q_k^{(1)}(\al_0,\lam_0)<1}q_k^{(1)}(\az,\lam_0)1_{L_n}(\D z_k)dz_k\pi_{k-1}(dz_{k-1})\\
    &\leq\intd\Bigg(\intd  q_k^{(1)}(\al_0,\lam_0)1_{L_n}(\D z_k)dz_k\Bigg)^{1-\kappa}\Bigg(\intd 1_{L_n}(\D z_k)dz_k\Bigg)^{\kappa}\pi_{k-1}(z_{k-1})+o(n^{-1})\\
    &\lesssim h_n^{(1+(d+\gam)\rho\wedge1)(1-\kappa)+d\kappa}+o(n^{-1})\\
    &=o(n^{-1}).
\end{align*}
In the above estimate, we used the fact that the volume of the $d$-dimensional sphere with radius $r$ is given by $\pi^{d/2}r^d/\Gam(d/2+1)$.
By utilizing \eqref{mlogineq} and Corollary \ref{testi}, we also have
\begin{align*}
    &G_2(z_{k-1})\\
    &\leq\intd- \log \bar{q}^{(2)}_{h_n}(z_{k-1},z_k;\al_0,\lam_0)1_{\bar{q}^{(2)}_{h_n}(z_{k-1},z_k;\al_0,\lam_0)<1}\bar{q}^{(2)}_{h_n}(z_{k-1},z_k;\al_0,\lam_0)dz_k\\
    &\lesssim h_n^{1+(m+\gam)\rho\wedge1}\intd\exp\big(-C||\D z_k||_1\big) dz_k+C_\kappa h_n^2\\
    &=o(n^{-1}),
\end{align*}
for small enough $\kappa>0$.
Hence, the desired result follows.
\end{proof}

From the elementary inequality $\log(1+x)\leq 1+x$ for any $x\geq0$ and \eqref{tvd}, we have
\begin{align*}
    &\sumk\intd\intd\Bigg[\log q_k(\az,\lz)-\log\tilde{q}_k(\az,\lz)\Bigg] \tilde{q}_k(\al_0,\lam_0)dz_k \pi_{k-1}(dz_{k-1})\\
    &=\sumk\intd\intd\log\bigg(1+\frac{q_k(\az,\lz)-\tilde{q}_k(\az,\lz)}{\tilde{q}_k(\az,\lz)}\bigg)\tilde{q}_k(\al_0,\lam_0)dz_k \pi_{k-1}(dz_{k-1})\\
    &\leq \sumk\intd\intd \bigg[q_k(\az,\lz)-\tilde{q}_k(\az,\lz)\bigg]dz_k \pi_{k-1}(dz_{k-1})\\
    &=o(1).
\end{align*}
Hence we obtain
\begin{align}
    &\label{estz}\sumk\intd\intd\log q_k(\az,\lz) \tilde{q}_k(\al_0,\lam_0)dz_k \pi_{k-1}(dz_{k-1})\\
    \nn &=\sumk\intd\intd\log \tilde{q}_k(\az,\lz) \tilde{q}_k(\al_0,\lam_0)dz_k \pi_{k-1}(dz_{k-1})+o(1),
\end{align}
and combined with Lemma \ref{lemapp1}, we obtain \eqref{app1}.

Next, we introduce some lemmas for \eqref{app2}.

\begin{Lem}\label{momratio}
    Suppose that Assumptions \ref{inidist}-\ref{indentifiability} hold. Then there exists a positive constant $C_A$ such that for any $k,l\in\mbbn$, $\al\in\Theta_\al$ and $z_{k-1}\in\mbbr^d$, 
    $$\intd \Bigg(\frac{q^{(l)}_k(\az,\lz)}{q^{(l)}_k(\al,\lam)}\Bigg)^{1+(\xi\wedge C_A)}q^{(l)}_k(\al,\lam)dz_k\lesssim h_n^l$$
\end{Lem}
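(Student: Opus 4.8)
The plan is to move the outer power inside the jump-time/position integral by a convexity (perspective) argument, and then to exploit the product structure of $q^{(l)}_k$ so that the diffusion factors are controlled by Aronson-type estimates and the jump factors by Assumption \ref{jdist}-(1). Write $\beta=\xi\wedge C_A$ and recall from \eqref{tdecom} that $q^{(l)}_k(\al,\lam)=\lam^l\int a^{(l)}(\omega,z_k;\al)\,d\mu(\omega)$, where $\omega=(\tau_1,\dots,\tau_l,x_1,\dots,x_{2l})$ ranges over the time simplex $\{0<\tau_1<\dots<\tau_l<h_n\}$ times $(\mbbr^d)^{2l}$ with Lebesgue measure $d\mu$, and $a^{(l)}$ is the product of the $l+1$ diffusion transition densities and the $l$ jump densities $F_\theta$. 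Since $\lam\in(\underline\lam,\bar\lam)$, the prefactor $\lz^{l(1+\beta)}/\lam^{l\beta}$ is bounded by $A^l$ for a constant $A$, so it suffices to bound $\intd I(\az)^{1+\beta}I(\al)^{-\beta}\,dz_k$ with $I(\al)=\int a^{(l)}(\omega,z_k;\al)\,d\mu(\omega)$.

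The function $\phi(s,t)=s^{1+\beta}t^{-\beta}$ is the perspective of the convex map $r\mapsto r^{1+\beta}$, hence jointly convex and positively homogeneous of degree one; applying Jensen to the probability measure $a^{(l)}(\cdot,z_k;\al)\,d\mu/I(\al)$ gives the pointwise bound $I(\az)^{1+\beta}I(\al)^{-\beta}\le\int a^{(l)}(\omega,z_k;\az)^{1+\beta}a^{(l)}(\omega,z_k;\al)^{-\beta}\,d\mu(\omega)$. After this reduction the integrand factorizes into a product of $l+1$ ratios of diffusion densities and $l$ ratios $F_{\tz}^{1+\beta}F_\theta^{-\beta}$, so that $\intd I(\az)^{1+\beta}I(\al)^{-\beta}dz_k$ is bounded by a product of spatial integrals once I integrate $z_k,x_{2l},x_{2l-1},\dots,x_1$ one at a time from the outside in; each integration leaves the remaining factors independent of the variable just removed, so the peeling is clean and the base point $z_{k-1}$ survives only in the innermost factor.

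Each diffusion factor is handled by the two-sided Aronson estimate: bounding the numerator above and the denominator below yields $p_s(w,y;\az)^{1+\beta}p_s(w,y;\al)^{-\beta}\le C\,s^{-d/2}\exp(-\kappa_\beta|y-w|^2/s)$ with $\kappa_\beta=(1+\beta)/C-\beta C$, and I choose $C_A$ small enough that $\kappa_\beta>0$; integrating in $y$ (or in the relevant intermediate variable) then gives a finite constant independent of $s$, of the base point $w$, and of $\al$. Each jump factor is controlled by $\int F_{\tz}^{1+\beta}F_\theta^{-\beta}\le C^\beta\int F_{\tz}F_\theta^{-\beta}1_{F_{\tz}\neq0}$, which is finite and uniform in $\theta$ for $\beta\le\xi$ by Assumption \ref{jdist}-(1) and the estimate in the remark following it (the $\theta$-independence of $\mathrm{supp}(F_\theta)$ makes the ratio well defined, and the uniform exponential tail from Assumption \ref{jdist}-(2) yields uniformity in $\theta$). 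Peeling the $2l+1$ spatial integrals produces $2l+1$ such uniform constants, and the remaining integral over the time simplex contributes its volume $h_n^l/l!$.

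Collecting these factors gives the bound $C^{2l+1}(h_n^l/l!)$ times the bounded $\lam$-prefactor, which is $\lesssim h_n^l$ uniformly in $k$, $l$, $z_{k-1}$ and $\al$ because $C^{2l+1}A^l/l!$ is bounded in $l$. The main obstacle is the quantitative choice of $C_A$: since the Aronson upper and lower bounds carry different Gaussian constants, the exponent $\kappa_\beta$ of the diffusion ratio is positive only for sufficiently small $\beta$, and I must fix $C_A$ strictly below this threshold while simultaneously keeping $\beta\le\xi$ so that the jump ratio stays integrable; verifying that every constant so produced is genuinely uniform in the time arguments and in the parameters is the part demanding the most care.
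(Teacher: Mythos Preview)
Your proof is correct and follows essentially the same approach as the paper: both arguments pull the exponent inside the jump-time/position integral via Jensen's inequality (your perspective-function formulation is equivalent to the paper's application of Jensen to $r\mapsto r^m$ against the probability measure $g_k(\cdot;\al,\lam)\,d\mu/q_k^{(l)}(\al,\lam)$), and then control the diffusion ratios by Aronson-type bounds with $C_A$ chosen small enough, and the jump ratios by Assumption~\ref{jdist}-(1). The only cosmetic difference is the order of the final spatial integrations---the paper convolves the Gaussian factors first (as in the proof of Theorem~\ref{ktesti}) while you peel variables outside-in---but this does not affect the argument.
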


\begin{proof}
Let $l=1$.
For simplicity, we write
$$g_k(u_1,u_2,\al,\lam)=\lam p_{\tau_1}(z_{k-1},u_1;\al) F_{\theta}(u_2) p_{h_n-\tau_1}(u_1+u_2, z_k; \al).$$
From Jensen's inequality, for all $m>1$, we have
    \begin{align*}
        &\intd \Bigg(\frac{q^{(l)}_k(\az,\lz)}{q^{(l)}_k(\al,\lam)}\Bigg)^{m}q^{(l)}_k(\al,\lam)dz_k\\
        &=\intd \Bigg(\int_0^{h_n}\intd\intd \frac{g_k(u_1,u_2,\az,\lz)}{g_k(u_1,u_2,\al,\lam)} \frac{g_k(u_1,u_2,\al,\lam)}{q^{(1)}_k(\al,\lam)}du_1du_2d\tau_1\Bigg)^{m}q^{(1)}_k(\al,\lam)dz_k\\
        &\leq \intd\int_0^{h_n}\intd\intd \Bigg(\frac{g_k(u_1,u_2,\az,\lz)}{g_k(u_1,u_2,\al,\lam)} \Bigg)^{m} g_k(u_1,u_2,\al,\lam)du_1du_2d\tau_1dz_k.
    \end{align*}
Hence from Aronson-type estimate and Assumption \ref{jdist}-(1), there exists a positive constant $C_A$ such that
\begin{align*}
    &\intd\int_0^{h_n}\intd\intd \Bigg(\frac{g_k(u_1,u_2,\az,\lz)}{g_k(u_1,u_2,\al,\lam)} \Bigg)^{1+(\xi\wedge C_A)} g_k(u_1,u_2,\al,\lam)du_1du_2d\tau_1dz_k\\
    &\lesssim \intd\int_0^{h_n}\intd \frac{F_{\tz}(u_2)}{(F_\theta(u_2))^{\xi\wedge C_A}}h^{-\frac{d}{2}}\exp\Bigg(-\frac{|u_2-(z_k-z_{k-1})|^2}{Ch_n}\Bigg)du_2d\tau_1dz_k\\
    &\lesssim h_n\intd\frac{F_{\tz}(u_2)}{(F_\theta(u_2))^{\xi\wedge C_A}}du_2\lesssim h_n.
\end{align*}
For general $l$, the proof is similar and thus we omit it.
\end{proof}

\begin{Lem}\label{derpar}
    Under Assumptions \ref{inidist}-\ref{indentifiability}, there exists a positive constant $C$ such that for any $k\in\mbbn$, $l\in\{1,2\}$, $m_1\in\{1,2,3,4,5\}$, $m_2\geq1$, $\al\in\Theta_\al$, and $z_{k-1}\in\mbbr^d$,
    $$\intd \Bigg|\frac{\p_\al^{m_1} q_k^{(l)}(\al,\lam)}{q_k^{(l)}(\al,\lam)}\Bigg|^{m_2}q_k^{(l)}(\al_0,\lz)dz_k\lesssim h_n^{l}(1+|z_{k-1}|^C).$$
\end{Lem}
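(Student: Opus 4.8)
The plan is to combine a Jensen-type bound on the score function with the ratio estimate of Lemma \ref{momratio} through Hölder's inequality, in the same spirit as the proof of Lemma \ref{momratio}. Throughout, I write the integrand defining $q_k^{(l)}(\al,\lam)$ as $g^{(l)}(v,\al,\lam)$, where $v=(\tau_1,\dots,\tau_l,u_1,\dots,u_{2l})$ collects the jump times and the intermediate and jump positions, so that $q_k^{(l)}(\al,\lam)=\int g^{(l)}(v,\al,\lam)\,dv$ is a product of free diffusion transition densities $p$ and jump densities $F_\theta$. Note that $\p_\al$ leaves the constant $\lam^l$ untouched, that $F_\theta$ depends on $\al$ only through $\theta$, and that each $p$-factor depends on both $\theta$ and $\sig$.

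First I would differentiate under the integral sign. Since Assumption \ref{coef} provides $\al$-derivatives of the coefficients up to order $5$ and Assumption \ref{jdist}-(3) provides $\theta$-derivatives of $\log F_\theta$ up to order $5$ (which is exactly why $m_1\le 5$), the quotient $\p_\al^{m_1}g^{(l)}/g^{(l)}$ is, by the Leibniz rule together with the Faà di Bruno / complete Bell-polynomial identity $\p^{m}G/G=B_m(\p\log G,\dots,\p^m\log G)$, a polynomial $\Phi(v,\al,\lam)$ in the logarithmic derivatives $\{\p_\al^j\log p_{(\cdot)},\ \p_\theta^j\log F_\theta\}_{1\le j\le m_1}$. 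Writing $\mu_{\al,\lam}(dv)=g^{(l)}(v,\al,\lam)\,dv/q_k^{(l)}(\al,\lam)$ for the normalised integrand, this gives the score representation
$$\frac{\p_\al^{m_1}q_k^{(l)}(\al,\lam)}{q_k^{(l)}(\al,\lam)}=E_{\mu_{\al,\lam}}\big[\Phi(\cdot,\al,\lam)\big].$$

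Next, Hölder's inequality with conjugate exponents $(s,s')$, where $s'=1+(\xi\wedge C_A)$, together with Jensen's inequality applied to pull the power inside $E_{\mu_{\al,\lam}}$, yields
$$\intd\Bigg|\frac{\p_\al^{m_1}q_k^{(l)}(\al,\lam)}{q_k^{(l)}(\al,\lam)}\Bigg|^{m_2}q_k^{(l)}(\al_0,\lz)\,dz_k\leq \Bigg(\intd E_{\mu_{\al,\lam}}\big[|\Phi|^{m_2 s}\big]\,q_k^{(l)}(\al,\lam)\,dz_k\Bigg)^{1/s}\Bigg(\intd\Big(\frac{q_k^{(l)}(\al_0,\lz)}{q_k^{(l)}(\al,\lam)}\Big)^{s'}q_k^{(l)}(\al,\lam)\,dz_k\Bigg)^{1/s'}.$$
The second factor is $\lesssim (h_n^{l})^{1/s'}$ by Lemma \ref{momratio}. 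In the first factor, $E_{\mu_{\al,\lam}}[|\Phi|^{m_2 s}]\,q_k^{(l)}(\al,\lam)=\int|\Phi|^{m_2 s}g^{(l)}(v,\al,\lam)\,dv$, so after the $z_k$-integration it becomes an integral of $|\Phi|^{m_2 s}$ against the unnormalised integrand over all variables. Here I would bound the two kinds of log-derivatives separately: the jump factors $\p_\theta^j\log F_\theta(u_{2i})$ have polynomial growth in $|u_{2i}|$ by Assumption \ref{jdist}-(3), which is absorbed by the decay $F_\theta(u_{2i})\lesssim e^{-C|u_{2i}|}$ of Assumption \ref{jdist}-(2); the diffusion factors $\p_\al^j\log p_\tau(\cdot)$ are controlled by Aronson-type estimates together with the logarithmic-derivative (Malliavin / stochastic-flow) estimates for the free transition density, whose weighted moments $\int|\p_\al^j\log p_\tau|^{m}p_\tau\,dy$ are finite and $O(1)$ uniformly in the bridge length $\tau$. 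Integrating the simplex $\{0<\tau_1<\dots<\tau_l<h_n\}$ then supplies the factor $h_n^{l}$, while the residual polynomial weights collapse into a constant multiple of $1+|z_{k-1}|^{C}$; hence the first factor is $\lesssim (h_n^{l}(1+|z_{k-1}|^{C}))^{1/s}$. Multiplying the two factors and using $1/s+1/s'=1$ gives the claimed bound $\lesssim h_n^{l}(1+|z_{k-1}|^{C})$.

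The main obstacle is the control of the logarithmic derivatives $\p_\al^{j}\log p_\tau(x,y;\al)$ of the non-explicit free diffusion transition density and, in particular, the uniform integrability of their weighted moments as the bridge length $\tau$ degenerates to $0$ or $h_n$: a naive estimate such as $\p_\sig\log p_\tau\sim|y-x|^2/\tau$ blows up at the endpoints, and it is only after integrating against $p_\tau$ itself (which rescales $|y-x|^2/\tau$ to an $O(1)$ quantity) that the $\tau$-singularity becomes harmless and the simplex integration produces exactly $h_n^{l}$. Making this rigorous is where the Malliavin-calculus / stochastic-flow estimates for the derivatives of the transition density are essential, and keeping all constants uniform in $\al\in\Theta_\al$ and polynomial in $|z_{k-1}|$ is the delicate bookkeeping.
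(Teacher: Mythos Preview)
Your proposal is correct and follows essentially the same route as the paper. Both apply H\"older's inequality with the same conjugate exponents $(s,s')$, $s'=1+(\xi\wedge C_A)$, against the (possibly unnormalised) measure $q_k^{(l)}(\al,\lam)\,dz_k$, then invoke Lemma \ref{momratio} for the ratio factor; for the remaining factor the paper simply cites the argument of \cite[Proposition 5.4]{OgiUeh23}, whereas you spell out the Jensen reduction $\big|E_{\mu_{\al,\lam}}[\Phi]\big|^{m_2s}\le E_{\mu_{\al,\lam}}\big[|\Phi|^{m_2s}\big]$ and the subsequent control of the jump and diffusion log-derivatives --- which is precisely what that external reference provides.
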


\begin{proof}
    Let $d_k^{(l)}(\al,\lam)=\intd q_k^{(l)}(\al,\lam)dz_k$ and $\kappa=1+(\xi\wedge C_A)$.
    By applying Jensen's inequality and H\"{o}lder's inequality, it follows from Lemma \ref{momratio} and the argument in the proof of \cite[Proposition 5.4]{OgiUeh23} that
    \begin{align*}
        &\intd \Bigg|\frac{\p_\al^{m_1} q_k^{(l)}(\al,\lam)}{q_k^{(l)}(\al,\lam)}\Bigg|^{m_2}q_k^{(l)}(\al_0,\lz)dz_k\\
        &=\intd \Bigg|\frac{\p_\al^{m_1} q_k^{(l)}(\al,\lam)}{q_k^{(l)}(\al,\lam)}\Bigg|^{m_2}\frac{q_k^{(l)}(\al_0,\lz)}{q_k^{(l)}(\al,\lam)} \frac{q_k^{(l)}(\al,\lam)}{d_k^{(l)}(\al,\lam)}dz_kd_k^{(l)}(\al,\lam)\\
        &\leq \Bigg(\intd \Bigg|\frac{\p_\al^{m_1} q_k^{(l)}(\al,\lam)}{q_k^{(l)}(\al,\lam)}\Bigg|^{\frac{\kappa m_2}{\kappa-1}}\frac{q_k^{(l)}(\al,\lam)}{d_k^{(l)}(\al,\lam)}dz_k\Bigg)^{\frac{\kappa-1}{\kappa}}\Bigg(\intd \Bigg(\frac{q^{(l)}_k(\az,\lz)}{q^{(l)}_k(\al,\lam)}\Bigg)^{\kappa}\frac{q_k^{(l)}(\al,\lam)}{d_k^{(l)}(\al,\lam)}dz_k\Bigg)^{\frac{1}{\kappa}}d_k^{(l)}(\al,\lam)\\
        &=\Bigg(\intd \Bigg|\frac{\p_\al^{m_1} q_k^{(l)}(\al,\lam)}{q_k^{(l)}(\al,\lam)}\Bigg|^{\frac{\kappa m_2}{\kappa-1}}q_k^{(l)}(\al,\lam)dz_k\Bigg)^{\frac{\kappa-1}{\kappa}}\Bigg(\intd \Bigg(\frac{q^{(l)}_k(\az,\lz)}{q^{(l)}_k(\al,\lam)}\Bigg)^{\kappa}q_k^{(l)}(\al,\lam)dz_k\Bigg)^{\frac{1}{\kappa}}\\
        &\lesssim h_n^l(1+|z_{k-1}|^C).
    \end{align*}
    
\end{proof}

Similarly to the proof of Lemma \ref{lemapp1}, we have
\begin{align*}
    &E\Bigg[\Bigg|\ezn[l_n(\mbfz_n,\aes,\les)]-\sumk\intd\intd\log q_k(\aes,\les) \tilde{q}_k(\al_0,\lam_0)dz_k \pi_{k-1}(dz_{k-1})\Bigg|\Bigg]\\
    &\leq E\Bigg[\Bigg|\sumk\intd \bigg(\hat{G}_1(z_{k-1})+\hat{G}_2(z_{k-1})+\hat{G}_3(z_{k-1})\bigg)\pi_{k-1}(dz_{k-1})\Bigg|\Bigg]+o(1),
\end{align*}
where
\begin{align*}
    &\hat{G}_1(z_{k-1})=e^{-\lam h_n}\intd- \log q_k(\aes,\les)1_{q_k(\aes,\les)<1}q_k^{(1)}(\az,\lam_0)1_{L_n}(\D z_k)dz_k,\\
    &\hat{G}_2(z_{k-1})=e^{-\lam h_n}\intd- \log q_k(\aes,\les)1_{q_k(\aes,\les)<1}q^{(2)}_{h_n}(z_{k-1},z_k;\al_0,\lam_0)dz_k,\\
    &\hat{G}_3(z_{k-1})=e^{-\lam h_n}\intd- \log q_k(\aes,\les)1_{q_k(\aes,\les)<1}\bar{q}^{(3)}_{h_n}(z_{k-1},z_k;\al_0,\lam_0)dz_k.
\end{align*}

By using \eqref{mlogineq}, for any $\kappa\in(0,1)$, there exists a positive constant $C_\kappa$ such that
$$|\log x|\leq x^{-\kappa}+C_\kappa+x,$$
for all $x\in\mbbr$.
Moreover, Lemma \ref{ktesti} implies that $q_k^{(1)}(\az,\lam_0)$ is upper-bounded.
Hence, for small enough $\kappa>0$, it follows from Taylor expansion, Cauchy-Schwartz inequality, and Lemma \ref{derpar} that
\begin{align*}
    &\hat{G}_1(z_{k-1})\\
    &\leq \intd -\log q_k^{(1)}(\aes,\les)1_{q_k(\aes,\les)<1}q_k^{(1)}(\az,\lam_0)1_{L_n}(\D z_k)dz_k\\
    &\leq \intd \big|\log q_k^{(1)}(\az,\lz)\big|1_{q_k(\aes,\les)<1}q_k^{(1)}(\az,\lam_0)1_{L_n}(\D z_k)dz_k\\
    &+ \intd \Bigg|\frac{\p_\al q_k^{(1)}(\check{\al}_n,\check{\lam}_n)}{q_k^{(1)}(\check{\al}_n,\check{\lam}_n)}\Bigg|q_k^{(1)}(\az,\lam_0) 1_{L_n}(\D z_k)dz_k\big|\aes-\az\big|\\
    &\leq \intd \Big[\big(q_k^{(1)}(\az,\lam_0)\big)^{-\kappa} + q_k^{(1)}(\az,\lam_0) + C_\kappa\Big]q_k^{(1)}(\az,\lam_0)1_{L_n}(\D z_k)dz_k\\
    &+\Bigg(\intd \Bigg|\frac{\p_\al q_k^{(1)}(\check{\al}_n,\check{\lam}_n)}{q_k^{(1)}(\check{\al}_n,\check{\lam}_n)}\Bigg|^{1+\frac{1}{\eta}}q_k^{(1)}(\az,\lam_0)dz_k\Bigg)^{\frac{\eta}{1+\eta}} \Bigg(\intd q_k^{(1)}(\az,\lam_0) 1_{L_n}(\D z_k)dz_k\Bigg)^{\frac{1}{1+\eta}}\big|\aes-\az\big|\\
    &\lesssim \bigg[h_n^{(1+(d+\gam)\rho\wedge1)(1-\kappa)+d\kappa}+h_n^{1+\frac{(d+\gam)\rho\wedge1}{1+\eta}}\big|\aes-\az\big|\bigg]\big(1+|z_{k-1}|^C\big)+o(n^{-1}),
\end{align*}
with $\check{\al}_n=s\az+(1-s)\aes$ for $s\in(0,1)$. 
In the last inequality, we have used the same estimates for $G_1(z_{k-1})$ appearing in the proof of Lemma \ref{lemapp1}.
Similarly, Lemma \ref{derpar} yields that
\begin{align*}
    &\hat{G}_2(z_{k-1})\\
    &\leq \intd \Bigg|\frac{\p_\al q_k^{(2)}(\check{\al}_n,\check{\lam}_n)}{q_k^{(2)}(\check{\al}_n,\check{\lam}_n)}\Bigg|q_k^{(2)}(\az,\lam_0)dz_k\big|\aes-\az\big|+o(n^{-1})\\
    &\lesssim h_n^2(1+|z_{k-1}|^C)\big|\aes-\az\big|+o(n^{-1}).
\end{align*}
By making use of Aronson-type estimate and Corollary \ref{testi}, we have
\begin{align*}
    &\hat{G}_3(z_{k-1})\\
    &\leq \intd -\log p_k(\aes)1_{q_k(\aes,\les)<1} \bar{q}^{(3)}_{h_n}(z_{k-1},z_k;\al_0,\lam_0)dz_k\\
    &\lesssim h_n^3\intd\Bigg(C-\frac{d}{2}\log h_n+\frac{|\D z_k|^2}{h_n}\Bigg)\exp\Big(-u\zeta|\D z_k|\Big)dz_k\\
    &\lesssim h_n^2.
\end{align*}
Combined with the moment convergence of $|\aes-\az|$, we get 
$$E\Bigg[\ezn[l_n(\mbfz_n,\aes,\les)]-\sumk\intd\intd\log q_k(\aes,\les) \tilde{q}_k(\al_0,\lam_0)dz_k \pi_{k-1}(dz_{k-1})\Bigg]=o(1).$$
Hence, for \eqref{app2}, it remains to show 
\begin{equation}\label{negli}
    E\Bigg[\sumk\intd\intd\big[\log q_k(\aes,\les)-\log \tilde{q}_k(\aes,\les)\big]\tilde{q}_k(\al_0,\lam_0)dz_k \pi_{k-1}(dz_{k-1})\Bigg]=o(1).
\end{equation}
Let $\hat{v}_{n,\al}=\aes-\az$.
By the definition of $\tilde{q}_k$ and the fundamental inequality: $\log(1+x+y)\leq \log(1+x)+\log(1+y)$ for any $x,y\geq0$, we obtain the following.
\begin{align*}
    0&\leq\intd\Bigg[\log q_k(\aes,\les)-\log\tilde{q}_k(\aes,\les)\Bigg] \tilde{q}_k(\al_0,\lam_0)dz_k\\
    &=\intd \log \Bigg(1+\frac{q_k^{(1)}(\aes,\les)+\bar{q}_k^{(2)}(\aes,\les)}{p_k(\aes)}\Bigg)p_k(\az)1_{L_n}(\D z_k)dz_k\\
    &+\intd \log \Bigg(1+\frac{p_k(\aes)+\bar{q}_k^{(2)}(\aes,\les)}{q_k^{(1)}(\aes,\les)}\Bigg)q_k^{(1)}(\az,\lz)1_{L_n^c}(\D z_k)dz_k\\
    &\leq \intd \log \Bigg(1+\frac{q_k^{(1)}(\aes,\les)}{p_k(\aes)}\Bigg)p_k(\az)1_{L_n}(\D z_k)dz_k\\
    &+\intd \log \Bigg(1+\frac{p_k(\aes)}{q_k^{(1)}(\aes,\les)}\Bigg)q_k^{(1)}(\az,\lz)1_{L_n^c}(\D z_k)dz_k\\
    &+\intd \log \Bigg(1+\frac{\bar{q}_k^{(2)}(\aes,\les)}{p_k(\aes)}\Bigg)p_k(\az)1_{L_n}(\D z_k)dz_k\\
    &+\intd \log \Bigg(1+\frac{\bar{q}_k^{(2)}(\aes,\les)}{q_k^{(1)}(\aes,\les)}\Bigg)q_k^{(1)}(\az,\lz)1_{L_n^c}(\D z_k)dz_k.
\end{align*}
We will derive the estimates of the last quantities in the following lemma.

\begin{Lem}\label{lem:negli}
    Under Assumptions \ref{inidist}-\ref{indentifiability}, it follows that
    \begin{equation*}
        E\Bigg[\sumk\intd\intd \log \Bigg(1+\frac{q_k^{(1)}(\aes,\les)}{p_k(\aes)}\Bigg)p_k(\az)1_{L_n}(\D z_k)dz_k\pi_{k-1}(dz_{k-1})\Bigg]= o(1).
    \end{equation*}
\end{Lem}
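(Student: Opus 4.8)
The plan is to first linearize the logarithm and then separate the ``main'' jump contribution from a remainder that is small because $\aes$ is consistent for $\az$. Since $q_k^{(1)}$ and $p_k$ are nonnegative, the elementary bound $\log(1+x)\le x$ applies, and I would split the weight as $p_k(\az)=p_k(\aes)+(p_k(\az)-p_k(\aes))$. This produces two pieces. For the main piece, the factor $p_k(\aes)$ in the weight cancels the denominator exactly:
$$\log\Bigl(1+\tfrac{q_k^{(1)}(\aes,\les)}{p_k(\aes)}\Bigr)p_k(\aes)1_{L_n}(\D z_k)\le q_k^{(1)}(\aes,\les)1_{L_n}(\D z_k),$$
so no density ratio ever appears. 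Integrating in $z_k$ and using the uniform (in $\al,\lam$) version of the estimate \eqref{pj1c}, I would bound $\intd q_k^{(1)}(\aes,\les)1_{L_n}(\D z_k)dz_k\lesssim h_n^{1+((d+\gam)\rho)\wedge1}(1+|z_{k-1}|^C)$. Summing over $k$, taking expectations (the polynomial in $z_{k-1}$ is integrated by $\pi_{k-1}$ using the uniform moment bounds and Assumption \ref{inidist}), gives a term of order $nh_n^{1+((d+\gam)\rho)\wedge1}$, which tends to $0$ by Assumption \ref{balance-cond}-(2), since $\tfrac{1+((d+\gam)\rho)\wedge1}{1+\eta}<1+((d+\gam)\rho)\wedge1$ and $h_n<1$.

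For the remainder piece I would again use $\log(1+x)\le x$ and then write, by the mean value theorem applied to $\al\mapsto\log p_k(\al)$, $p_k(\az)-p_k(\aes)=p_k(\aes)\bigl(e^{s_k(\az-\aes)}-1\bigr)$ with the score $s_k=\p_\al\log p_k(\check{\al}_n)$ evaluated at an intermediate point $\check{\al}_n$. This again cancels $p_k(\aes)$ and leaves
$$\log\Bigl(1+\tfrac{q_k^{(1)}(\aes,\les)}{p_k(\aes)}\Bigr)(p_k(\az)-p_k(\aes))1_{L_n}(\D z_k)\le q_k^{(1)}(\aes,\les)\bigl|e^{s_k(\az-\aes)}-1\bigr|1_{L_n}(\D z_k),$$
and I would bound $|e^{s_k(\az-\aes)}-1|\lesssim |s_k||\az-\aes|$ on the event where $|\az-\aes|$ is small (whose complement is negligible by consistency). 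Applying H\"older's inequality with exponents $1+\tfrac1\eta$ and $1+\eta$ then splits the integral into a score-moment factor and a factor carrying the smallness of $q_k^{(1)}$ on $L_n$: the second factor is controlled by $\intd q_k^{(1)}(\aes,\les)1_{L_n}(\D z_k)dz_k$ as above, while the first factor is controlled by a $q^{(1)}$-weighted moment bound for $|\p_\al\log p_k|$, which is the diffusion-density analogue of Lemma \ref{derpar} and follows from the same Aronson-type and Malliavin estimates. This reproduces exactly the term $h_n^{1+\frac{((d+\gam)\rho)\wedge1}{1+\eta}}(1+|z_{k-1}|^C)|\aes-\az|$ encountered in the estimate of $\hat G_1$ in the main text; summing, taking expectations, and invoking the moment convergence $E[|\aes-\az|]\to0$ together with Assumption \ref{balance-cond} yields $o(1)$.

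The main obstacle is the remainder piece: pointwise on $L_n$ the density ratio $p_k(\az)/p_k(\aes)$ (equivalently the score $s_k$) grows like $\exp(c|\D z_k|^2/h_n)$ and is not bounded uniformly, so one cannot estimate it by a constant. The resolution is that this growth is harmless once it is integrated against $q_k^{(1)}$, because a single jump producing a total increment inside $L_n$ forces the jump to be small, and small jumps are rare under Assumption \ref{jdist}-(3); this is precisely what the factor $h_n^{((d+\gam)\rho)\wedge1}$ in \eqref{pj1c} records. The delicate point is therefore the choice of the H\"older exponent, which must be matched to the constant $\eta$ in the balance condition Assumption \ref{balance-cond}-(2) so that the resulting power of $h_n$, after summation over the $n$ increments and multiplication by $E[|\aes-\az|]=O((nh_n)^{-1/2})$, still vanishes.
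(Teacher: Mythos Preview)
Your main piece is correct and coincides with the paper's zeroth-order term $R_{0,n,k-1}$. The gap is in the remainder. You claim the score-moment factor $\int |\p_\al \log p_k(\check\al_n)|^{1+1/\eta} q_k^{(1)}(\aes,\les)\,dz_k$ is controlled by ``the diffusion-density analogue of Lemma~\ref{derpar}'', but this fails for the $\sigma$-component. Lemma~\ref{derpar} bounds moments of $\p_\al\log q_k^{(l)}$ against $q_k^{(l)}(\az,\lz)$, and the $O(h_n^l)$ rate comes from the score and the weight being matched. Here you integrate $\p_\sigma\log p_k$ against $q_k^{(1)}$: the diffusion $\sigma$-score behaves like $h_n^{-1}|\D z_k|^2$, while $q_k^{(1)}$ decays only exponentially in $|\D z_k|$ (Theorem~\ref{ktesti}), so $\int|\p_\sigma\log p_k|^{m}q_k^{(1)}\,dz_k$ is of order $h_n^{1-m}$, not $h_n$. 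Carrying this through your H\"older step and summing over $k$ gives a contribution of order $\sqrt{n}\,h_n^{r/(1+\eta)}$ with $r=((d+\gamma)\rho)\wedge1$, which does not vanish under Assumption~\ref{balance-cond} when $r<1$ (take $d=1$, $\gamma=0$). The comparison with $\hat G_1$ is also misplaced: there the relevant score is $\p_\al\log q_k^{(1)}$, not $\p_\al\log p_k$, and it is precisely this mismatch of densities that breaks your argument. A secondary issue is that the linearization $|e^{s_k(\az-\aes)}-1|\lesssim|s_k||\az-\aes|$ presupposes a pointwise bound on $s_k$ over $L_n$, which the paper never establishes (only moment bounds under $p_k$ are available).

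The paper's route is structurally different: it Taylor expands $\al\mapsto\log\bigl(1+\bar\lam q_k^{(1)}(\al)/p_k(\al)\bigr)$ around $\az$ to fifth order. For orders $1$--$4$ the key device is the identity
\[
\p_\al\log\Bigl(1+\frac{\bar\lam q^{(1)}}{p}\Bigr)\,p \;=\; \bar\lam\,\p_\al q^{(1)} \;-\; \frac{\bar\lam q^{(1)}}{p+\bar\lam q^{(1)}}\bigl(\p_\al p+\bar\lam\,\p_\al q^{(1)}\bigr)
\]
(and its higher-order analogues), which extracts an explicit factor $q_k^{(1)}(\az)$ without ever forming the ratio $p_k(\az)/p_k(\aes)$; the $\p_\al p$ contributions are then handled by H\"older using moments of $\p_\al\log p_k$ \emph{under $p_k$}---not under $q_k^{(1)}$---and those are standard. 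The fifth-order remainder is at the random $\check\al_n$ and is bounded crudely, but Assumption~\ref{balance-cond}-(1) guarantees $n/\bigl(n^{i/2}(nh_n)^{(5-i)/2}\bigr)\lesssim1$ for every $i$; a first-order remainder at $\check\al_n$, which is what your mean-value step produces, would leave a factor $\sqrt{n}$ uncompensated.
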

\begin{proof}
    From Taylor expansion, we have
    \begin{align}
        \nn&\intd \log \Bigg(1+\frac{q_k^{(1)}(\aes,\les)}{p_k(\aes)}\Bigg)p_k(\az)1_{L_n}(\D z_k)dz_k\\
        \nn&\leq \intd \log \Bigg(1+\frac{\bar{\lam}q_k^{(1)}(\aes)}{p_k(\aes)}\Bigg)p_k(\az)1_{L_n}(\D z_k)dz_k\\
        \nn&=R_{0,n,k-1}+R_{1,n,k-1}+R_{2,n,k-1},
    \end{align}
    where $R_{0,n,k-1}$, $R_{1,n,k-1}$, and $R_{2,n,k-1}$ are defined as
    \begin{align}
        \nn&R_{0,n,k-1}=\intd \log \Bigg(1+\frac{\bar{\lam}q_k^{(1)}(\az)}{p_k(\az)}\Bigg)p_k(\az)1_{L_n}(\D z_k)dz_k,\\
        \nn&R_{1,n,k-1}=\sum_{l=1}^4\frac{1}{l!}\Bigg[\intd\p_\al^{l}\log \Bigg(1+\frac{\bar{\lam}q_k^{(1)}(\az)}{p_k(\az)}\Bigg)p_k(\az)1_{L_n}(\D z_k)dz_k\Bigg]\bigg[\hat{v}_{n,\al}^{\otimes l}\bigg],\\
        \nn& R_{2,n,k-1}=\frac{1}{5!}\Bigg[\intd\p_\al^5\log \Bigg(1+\frac{\bar{\lam}q_k^{(1)}(\check{\al}_n)}{p_k(\check{\al}_n)}\Bigg)p_k(\az)1_{L_n}(\D z_k)dz_k\Bigg]\bigg[\hat{v}_{n,\al}^{\otimes5}\bigg],
    \end{align}
    with $\check{\al}_n=s\az+(1-s)\aes$ for $s\in(0,1)$. 
    In the following, we look at these terms separately.
    From the proof of \eqref{estz}, we have
    $$0\leq R_{0,n,k-1}=\intd \log \Bigg(1+\frac{\bar{\lam}q_k^{(1)}(\az)}{p_k(\az)}\Bigg)p_k(\az)1_{L_n}(\D z_k)dz_k\lesssim h_n^{1+((d+\gamma)\rho)\wedge 1}\bigg(1+|z_{k-1}|^C\bigg),$$
    for any $z_{k-1}\in\mbbr^d$, and thus we obtain
    $$E\Bigg[\sumk\intd R_{0,n,k-1}\pi_{k-1}(dz_{k-1})\Bigg]=o(1).$$
    Next we observe the first term of $R_{1,n,k-1}$.
    We consider the following decomposition:
    \begin{align}
        \nn& \p_\al\log \Bigg(1+\frac{\bar{\lam}q_k^{(1)}(\az)}{p_k(\az)}\Bigg)p_k(\az)\\
        \nn&=\frac{\p_\al q_k^{(1)}(\az)p_k(\az)-\p_\al p_k(\az) q_k^{(1)}(\az)}{p_k(\az)+\bar{\lam} q_k^{(1)}(\az)}\bar{\lam}\\
        \nn&=\p_\al q_k^{(1)}(\az)\bar{\lam}-\frac{\bar{\lam}q_k^{(1)}(\az)}{p_k(\az)+\bar{\lam} q_k^{(1)}(\az)}\bigg[\p_\al p_k(\az)+\bar{\lam}\p_\al q_k^{(1)}(\az)\bigg].
    \end{align}
    \eqref{pj1c},  H\"{o}lder's inequality, and Lemma \ref{derpar} yield that
    \begin{align}
    \nn &\Bigg|\intd\intd \p_\al q_k^{(1)}(\az)  1_{L_n}(\D z_k)dz_k \pi_{k-1}(dz_{k-1})\Bigg|\\
    \nn &\leq\intd\Bigg(\intd \Bigg|\frac{\p_\al q_k^{(1)}(\az)}{q_k^{(1)}(\az)}\Bigg|^{1+\frac{1}{\eta}} q_k^{(1)}(\az) dz_k\Bigg)^{\frac{\eta}{1+\eta}}\Bigg(\intd q_k^{(1)}(\az)1_{L_n}(\D z_k)dz_k\Bigg)^{\frac{1}{1+\eta}} \pi_{k-1}(dz_{k-1})\\
    \label{dpj1c} &\lesssim h_n^{1+\frac{(d+\gam)\rho\wedge1}{1+\eta}}.
    \end{align}
    Since $\frac{\bar{\lam}q_k^{(1)}(\az)}{p_k(\az)+\bar{\lam} q_k^{(1)}(\az)}<1$,
    H\"{o}lder's inequality and Jensen's inequality: $(a+b)^{1+\ep}\leq 2^{1+\ep}(a^{1+\ep}+b^{1+\ep})$ for any $\ep>0$ and $a,b>0$ lead to
    \begin{align}
        \label{estprop}&\Bigg|\intd \frac{\p_\theta q_k^{(1)}(\az)p_k(\az)-\p_\theta p_k(\az) q_k^{(1)}(\az)}{p_k(\az)+\bar{\lam} q_k^{(1)}(\az)}1_{L_n}(\D z_k)dz_k\Bigg|\\
        \nn&\lesssim \Bigg|\intd \p_\theta q_k^{(1)}(\az)1_{L_n}(\D z_k)dz_k\Bigg|+\intd \Bigg(\frac{q_k^{(1)}(\az)}{p_k(\az)+\bar{\lam} q_k^{(1)}(\az)}\Bigg)^{\frac{1}{1+\eta}}|\p_\theta p_k(\az)+\bar{\lam}\p_\theta q_k^{(1)}(\az)|1_{L_n}(\D z_k)dz_k\\
        \nn&\lesssim \Bigg|\intd \p_\theta q_k^{(1)}(\az)1_{L_n}(\D z_k)dz_k\Bigg|\\
        \nn&+\Bigg(\intd q_k^{(1)}(\az)1_{L_n}(\D z_k)dz_k\Bigg)^{\frac{1}{1+\eta}}\Bigg(\intd \Bigg[\Bigg|\frac{\p_\theta p_k(\az)}{p_k(\az)}\Bigg|^{1+\frac{1}{\eta}}p_k(\az)+\Bigg|\frac{\p_\theta q_k^{(1)}(\az)}{q_k^{(1)}(\az)}\Bigg|^{1+\frac{1}{\eta}}q_k^{(1)}(\az)\Bigg]dz_k\Bigg)^{\frac{\eta}{1+\eta}}\\
        \nn&\lesssim h_n^{1 +\frac{((d+\gamma)\rho)\wedge 1}{1+\eta}}\bigg(1+|z_{k-1}|^C\bigg).
    \end{align}
    In the last inequality, we used \eqref{pj1c}, \cite[Proposition A.2]{OgiUeh23}, and Lemma \ref{derpar}.
    Similarly, we also have
    \begin{equation}
        \nn \Bigg|\intd \frac{\p_\sig q_k^{(1)}(\az)p_k(\az)-\p_\sig p_k(\az) q_k^{(1)}(\az)}{p_k(\az)+\bar{\lam} q_k^{(1)}(\az)}1_{L_n}(\D z_k)dz_k\Bigg|\lesssim h_n^{\frac{1+((d+\gamma)\rho)\wedge 1}{1+\eta}}\bigg(1+|z_{k-1}|^C\bigg).
    \end{equation}
    Hence, the moment convergence of the QMLE and Assumption \ref{balance-cond} implies that 
    \begin{align}
       &\nn E\Bigg[\sumk\intd \Bigg|\intd\p_\al\log \Bigg(1+\frac{\bar{\lam}q_k^{(1)}(\az)}{p_k(\az)}\Bigg)p_k(\az)1_{L_n}(\D z_k)dz_k\Bigg]\bigg[\hat{v}_{n,\al}\bigg]\Bigg|\pi_{k-1}(dz_{k-1})\Bigg]\\
       &\label{neg1}\lesssim \sqrt{n}h_n^{(\frac{1}{1+\eta}\vee \frac{1}{2})+\frac{((d+\gamma)\rho)\wedge 1}{1+\eta}}=o(1).
    \end{align}
    The term 
    $$\p_\al^2\log \Bigg(1+\frac{\bar{\lam}q_k^{(1)}(\az)}{p_k(\az)}\Bigg)p_k(\az)=\Big(\p_\al^2\log (p_k(\az)+\bar{\lam}q_k^{(1)}(\az))-\p_\al \log p_k(\az)\Big)p_k(\az)$$
    can be decomposed as
    \begin{align*}
        &\Big(\p_\al^2\log (p_k(\az)+\bar{\lam}q_k^{(1)}(\az))-\p_\al \log p_k(\az)\Big)p_k(\az)=\mathcal{J}_{1,k}+\mathcal{J}_{2,k},
    \end{align*}
    where
    \begin{align*}
    \mathcal{J}_{1,k}&=\bar{\lam}\p_\al^2q_k^{(1)}(\az)-\frac{\bar{\lam}q_k^{(1)}(\az)}{p_k(\az)+\bar{\lam} q_k^{(1)}(\az)}\big[\p_\al^2 p_k(\az)+\p_\al^2q_k^{(1)}(\az)\big],\\
        \mathcal{J}_{2,k}&=\frac{2\bar{\lam}(\p_\al p_k(\az))^2}{(p_k(\az)+q_k^{(1)}(\az))^2}q_k^{(1)}(\az)+\frac{\bar{\lam}^2(\p_\al p_k(\az))^2}{(p_k(\az)+q_k^{(1)}(\az))^2p_k(\az)}(q_k^{(1)}(\az))^2\\
        &-\frac{2\bar{\lam}\p_\al p_k(\az)\p_\al q_k^{(1)}(\az)}{p_k(\az)+q_k^{(1)}(\az)}+\frac{2\bar{\lam}\p_\al p_k(\az)\p_\al q_k^{(1)}(\az)}{(p_k(\az)+q_k^{(1)}(\az))^2}q_k^{(1)}(\az)\\
        &+\frac{\bar{\lam}^2(\p_\al q_k^{(1)}(\az))^2}{p_k(\az)+q_k^{(1)}(\az)}-\frac{\bar{\lam}^2(\p_\al q_k^{(1)}(\az))^2}{(p_k(\az)+q_k^{(1)}(\az))^2}q_k^{(1)}(\az).
    \end{align*}
    As in the proof of \eqref{neg1}, it is straightforward that
    $$E\Bigg[\sumk\intd\Bigg|\intd \mathcal{J}_{1,k}1_{L_n}(\D z_k)dz_k[\hat{v}_{n,\al}^{\otimes 2}]\Bigg|\pi_{k-1}(dz_{k-1})\Bigg]=o(1).$$
    As for $\mathcal{J}_{2,k}$, it follows  from \eqref{pj1c}, \cite[Proposition A.2]{OgiUeh23} and H\"{o}lder's inequality that
    \begin{align*}
        &\intd \frac{(\p_\theta p_k(\az))^2}{(p_k(\az)+q_k^{(1)}(\az))^2}q_k^{(1)}(\az)dz_{k}\\
        &=\intd \frac{(\p_\theta p_k(\az))^2}{(p_k(\az)+q_k^{(1)}(\az))^{2-\frac{\eta}{1+\eta}}}\Big(q_k^{(1)}(\az)\Big)^{\frac{1}{1+\eta}}dz_{k}\\
        &\leq \Bigg(\intd q_k^{(1)}(\az)dz_{k}\Bigg)^{\frac{1}{1+\eta}}\Bigg(\intd \Bigg|\frac{\p_\theta p_k(\az)}{p_k(\az)}\Bigg|^{\frac{2(1+\eta)}{\eta}}p_k(\az)dz_k\Bigg)^{\frac{\eta}{1+\eta}}\\
        &\lesssim h_n^{1+\frac{(d+\gamma)\rho)\wedge 1}{1+\eta}}(1+|z_{k-1}|^C).
    \end{align*}
    The rest term of $\mathcal{J}_{2,k}$ can be evaluated in a similar manner, and thus we get 
    $$E\Bigg[\sumk\intd\Bigg|\intd \mathcal{J}_{2,k}1_{L_n}(\D z_k)dz_k[\hat{v}_{n,\al}^{\otimes 2}]\Bigg|\pi_{k-1}(dz_{k-1})\Bigg]=o(1).$$
    Regarding the estimates of $l=3,4$ in the summand of $R_{1,n,k-1}$, its proof is almost the same and we omit the detail.
    Therefore, we arrive at
    \begin{equation}
        \nn E\Bigg[\sumk\intd \big|R_{1,n,k-1}\big|\pi_{k-1}(dz_{k-1})\Bigg]=o(1).
    \end{equation}
    
    Finally, we check
    \begin{equation}\label{reminder2}
         E\Bigg[\sumk\intd \big|R_{2,n,k-1}\big|\pi_{k-1}(dz_{k-1})\Bigg]=o(1).
    \end{equation}
    Now we show that there exist some positive constants $\ep$ and $C$ such that
    \begin{equation}\label{reminder5}
     \Bigg|\intd\p_\sig^i\p_\theta^{5-i}\log \Bigg(1+\frac{\bar{\lam}q_k^{(1)}(\check{\al}_n)}{p_k(\check{\al}_n)}\Bigg)p_k(\az)1_{L_n}(\D z_k)dz_k\Bigg|\lesssim h_n^{\ep 1_{i\neq 5}} (1+|z_{k-1}|^C),
    \end{equation}
    for all $i\in\{0,1,2,3,4,5\}$.
    To avoid redundancy, we only focus on $$\Bigg|\intd \frac{\p^5_\theta q_k^{(1)}(\check{\al})}{p_k(\check{\al}_n)+\bar{\lam}q_k^{(1)}(\check{\al}_n)}p_k(\az)1_{L_n}(\D z_k)dz_k\Bigg|,$$
    which appears in the left-hand-side of \eqref{reminder5}.
    From Lemma \ref{derpar}, Aronson-type estimates, and H\"{o}lder's inequality, there exists positive constant $C>1$ such that
    \begin{align}
    \nn&\Bigg|\intd \frac{\p^5_\theta q_k^{(1)}(\check{\al}_n)}{p_k(\check{\al}_n)+\bar{\lam}q_k^{(1)}(\check{\al}_n)}p_k(\az)1_{L_n}(\D z_k)dz_k\Bigg|\\
    \nn&\lesssim \intd \frac{|\p^5_\theta q_k^{(1)}(\check{\al}_n)|}{(p_k(\check{\al}_n)+\bar{\lam}q_k^{(1)}(\check{\al}_n))^{1-\frac{1}{C}}}h_n^{-\frac{d}{2}(1-\frac{1}{C})}\exp\Bigg(-\frac{|\D_k z|^2}{Ch_n}\Bigg)1_{L_n}(\D z_k)dz_k\\
        \nn&\lesssim \Bigg(\intd \Bigg(\frac{|\p^5_\theta q_k^{(1)}(\check{\al}_n)|}{q_k^{(1)}(\check{\al}_n)}\Bigg)^{C}q_k^{(1)}(\check{\al}_n)1_{L_n}(\D z_k)dz_k\Bigg)^{\frac{1}{C}}\Bigg(\intd h_n^{-\frac{d}{2}}\exp\Bigg(-\frac{|\D_k z|^2}{(C-1)h_n}\Bigg)dz_k\Bigg)^{1-\frac{1}{C}}\\
        \nn&\lesssim h_n^{\frac{1}{C}}(1+|z_{k-1}|^C).
    \end{align}
    By mimicking these calculations, it is easy to deduce \eqref{reminder5}.
    Moreover, for all $i\in\{0,1,2,3,4,5\}$, Assumption \ref{balance-cond} leads to
    $${\frac{n}{n^{\frac{i}{2}}(nh_n)^{\frac{5-i}{2}}}}=\sqrt{\frac{1}{n^3h_n^{5-i}}}\lesssim \sqrt{\frac{1}{n^{3-\frac{3}{5}(5-i)}}}\lesssim 1,$$
    and hence the moment convergence of the QMLE implies \eqref{reminder2}.
\end{proof}

Combined with Theorem \ref{ktesti}, and the fact that for all $p>0$,
$$\intd p_k(\az)1_{L_n^c}(\D z_k)dz_k=P(N_{\tz}((t_{k-1},t_k]\times\mbbr^d)=0 \ \text{and} \ |\D_k Z|> h_n^\rho|Z_{t_{k-1}})\lesssim h_n^p (1+|z_{k-1}|^C),$$
the argument of the proof of the previous lemma leads to
\begin{align*}
    &\intd \log \Bigg(1+\frac{p_k(\aes)}{q_k^{(1)}(\aes,\les)}\Bigg)q_k^{(1)}(\az,\lz)1_{L_n^c}(\D z_k)dz_k+\intd \log \Bigg(1+\frac{\bar{q}_k^{(2)}(\aes,\les)}{p_k(\aes)}\Bigg)p_k(\az)1_{L_n}(\D z_k)dz_k\\
    &+\intd \log \Bigg(1+\frac{\bar{q}_k^{(2)}(\aes,\les)}{q_k^{(1)}(\aes,\les)}\Bigg)q_k^{(1)}(\az,\lz)1_{L_n^c}(\D z_k)dz_k=o(1).
\end{align*}
Hence, we get \eqref{negli} and \eqref{app2}.


\subsection{Proof of Theorem \ref{asmun}}
Let
$$\tilde{b}_{1,\lam}=\log \frac{\les}{\lz}\sumk\varphi_n(\D_k X)1_{L_n^c}(\D_k X)+nh_n\lz\int F_{\tz}(z)\varphi_n(z)dz-nh_n\les\int F_{\tes}(z)\varphi_n(z)dz.$$

\begin{Lem}\label{BQL}
    Under Assumptions \ref{inidist}-\ref{indentifiability}, we have
    \begin{equation}        
    E\Bigg[\mbbh_n(\aes,\les)-\mbbh_n(\az,\lz)-\tilde{b}_{1,\lam}\Bigg]=\frac{1}{2}\dim\Theta.
    \end{equation}
\end{Lem}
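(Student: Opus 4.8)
The plan is to peel off the $\lam$–direction algebraically and then run the standard quasi-likelihood quadratic expansion in the remaining parameter $\al=(\theta,\sig)$. \textbf{First, the reduction.} Inserting the definitions of $\mbbh_{1,n},\mbbh_{2,n}$ and of $\tilde b_{1,\lam}$, the $\log\lam$–term $\log(\les/\lz)\sumk\vp_n(\D_k X)1_{L_n^c}(\D_k X)$ and the compensator terms $\pm nh_n\lam\int F_\theta\vp_n$ cancel exactly, leaving
\begin{align*}
\mbbh_n(\aes,\les)-\mbbh_n(\az,\lz)-\tilde b_{1,\lam}
&=\big[\mbbh_{1,n}(\aes)-\mbbh_{1,n}(\az)\big]\\
&\quad+\sumk\big(\log F_{\tes}(\D_k X)-\log F_{\tz}(\D_k X)\big)\vp_n(\D_k X)1_{L_n^c}(\D_k X)\\
&=:\mbbh_n^{\mathrm{red}}(\aes)-\mbbh_n^{\mathrm{red}}(\az),
\end{align*}
where $\mbbh_n^{\mathrm{red}}(\al):=\mbbh_{1,n}(\al)+\sumk\log F_\theta(\D_k X)\vp_n(\D_k X)1_{L_n^c}(\D_k X)$ depends on $\al$ only. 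Thus it suffices to prove $E[\mbbh_n^{\mathrm{red}}(\aes)-\mbbh_n^{\mathrm{red}}(\az)]\to\tfrac12\dim\Theta$.

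\textbf{Second, the expansion.} Write $\hat\delta_n=\aes-\az$ and $D_n=\diag(\sqrt n\,I_{p_\sig},\sqrt{T_n}\,I_{p_\theta})$, so that $\hat u_{n,\al}:=D_n\hat\delta_n$ is the $\al$–block of $\hat u_n$. A second-order Taylor expansion at $\az$ gives
\begin{align*}
\mbbh_n^{\mathrm{red}}(\aes)-\mbbh_n^{\mathrm{red}}(\az)=\p_\al\mbbh_n^{\mathrm{red}}(\az)[\hat\delta_n]+\tfrac12\p_\al^2\mbbh_n^{\mathrm{red}}(\az)[\hat\delta_n^{\otimes2}]+R_n,
\end{align*}
with $R_n$ collecting the third– and higher–order terms. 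Although $\aes$ maximizes the full $\mbbh_n$ rather than $\mbbh_n^{\mathrm{red}}$, the $\sig$–part of $\p_\al\mbbh_n^{\mathrm{red}}(\aes)$ vanishes identically and its $\theta$–part equals $\les nh_n\intd\p_\theta F_{\tes}\vp_n\,dz$; writing $\p_\theta F_\theta=F_\theta\,\p_\theta\log F_\theta$ and using $\intd\p_\theta F_{\tes}\,dz=\p_\theta\!\intd F_{\tes}\,dz=0$, this residual equals $-\les nh_n\intd\p_\theta F_{\tes}(1-\vp_n)\,dz=O_p(nh_n^{1+C_6})$ by Assumption \ref{jsupp}-(3), hence negligible after contraction with $\hat\delta_n$ under Assumption \ref{balance-cond}. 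Consequently $\aes$ solves the reduced estimating equation up to $o_p$, and expanding the reduced score about $\az$ yields $\p_\al\mbbh_n^{\mathrm{red}}(\az)[\hat\delta_n]=-\p_\al^2\mbbh_n^{\mathrm{red}}(\az)[\hat\delta_n^{\otimes2}]+o_p(1)$, so that
\begin{align*}
\mbbh_n^{\mathrm{red}}(\aes)-\mbbh_n^{\mathrm{red}}(\az)=\tfrac12\,\hat u_{n,\al}^\top\big(-D_n^{-1}\p_\al^2\mbbh_n^{\mathrm{red}}(\az)D_n^{-1}\big)\hat u_{n,\al}+o_p(1).
\end{align*}

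\textbf{Third, identifying the limit.} Using the ergodic theorem \eqref{ergthm} together with the Gaussian quasi-score computations of \cite{ShiYos06} and \cite{OgiYos11} for $\mbbh_{1,n}$, and the ergodic limit of the truncated jump functional $T_n^{-1}\sumk\p_\theta^2\log F_{\tz}(\D_k X)\vp_n(\D_k X)1_{L_n^c}(\D_k X)\cip-\lam_0\intd(\p_\theta\log F_{\tz})(\p_\theta\log F_{\tz})^\top F_{\tz}\,dz$ (whose truncation error is again controlled by Assumption \ref{jsupp}-(3)), one gets $-D_n^{-1}\p_\al^2\mbbh_n^{\mathrm{red}}(\az)D_n^{-1}\cip\Gam_\al:=\diag(\Gam_\sig,\Gam_\theta)$. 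A direct computation from $f_{\theta,\lam}=\lam F_\theta$ shows that the $\lam$–$\theta$ and $\lam$–$\sig$ blocks of $\Gam$ vanish (the drift is free of $\lam$, and the jump cross term reduces to $\intd\p_\theta F_{\tz}\,dz=0$), so $\Gam=\diag(\Gam_\sig,\Gam_\theta,\lam_0^{-1})$ and the $\al$–marginal of $N(0,\Gam^{-1})$ is exactly $N(0,\Gam_\al^{-1})$. Applying the moment convergence of $\hat u_n$ to the continuous, polynomially bounded map $u\mapsto u_\al^\top\Gam_\al u_\al$, together with the uniform integrability of the observed-information factor supplied by the QLA framework, yields
\begin{align*}
E\big[\mbbh_n^{\mathrm{red}}(\aes)-\mbbh_n^{\mathrm{red}}(\az)\big]\to\tfrac12\,E\big[u_\al^\top\Gam_\al u_\al\big]=\tfrac12\,\Tr(\Gam_\al\Gam_\al^{-1})=\tfrac12\dim\Theta,
\end{align*}
where $u_\al\sim N(0,\Gam_\al^{-1})$.

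\textbf{The main obstacle} is not the quadratic form but the bookkeeping of the truncation factors $\vp_n,1_{L_n^c}$ and of the remainder $R_n$. For $R_n$ I would expand to fifth order and bound the third– through fifth–order derivatives of $\mbbh_n^{\mathrm{red}}$ using the polynomial-growth bounds of Assumptions \ref{coef} and \ref{jdist}-(3) for the differentiated drift/diffusion and for $\log F_\theta$, together with the derivative-ratio estimates of Lemma \ref{derpar}; combined with the moment convergence of $\hat u_n$ and the rates $|\hat\delta_n|=O_p(n^{-1/2})$ in the $\sig$–direction and $O_p(T_n^{-1/2})$ in the $\theta$–direction, the terms of order $\hat\delta_n^{\otimes m}$ with $m\ge3$ vanish in $L^1$ under Assumption \ref{balance-cond} via the same scaling $n/\big(n^{i/2}(nh_n)^{(5-i)/2}\big)\lesssim1$ used at the end of Lemma \ref{lem:negli}. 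The recurring delicate point is that every replacement of $\int F_\theta$ by $\int F_\theta\vp_n$ and every restriction to $L_n^c$ must cost only positive powers of $h_n$, which is precisely what Assumptions \ref{jsupp} and \ref{jdist} guarantee.
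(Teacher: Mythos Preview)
Your reduction to $\mbbh_n^{\mathrm{red}}=\bmbbh_n$ is identical to the paper's, and your overall argument is correct. The route diverges only in how the linear and quadratic Taylor terms are combined. The paper keeps them separate: it evaluates $E\big[\tfrac{1}{\sqrt n}\p_\sig\mbbh_{1,n}(\az)[\hat u_{n,\sig}]\big]$ (and the $\theta$-analogue) by substituting the score linearization $\hat u_{n,\sig}=\Gam_\sig^{-1}\tfrac{1}{\sqrt n}\p_\sig\mbbh_{1,n}(\az)+r_n$, which yields $\dim\Theta$; the quadratic term then contributes $-\tfrac12\dim\Theta$ via moment convergence. You instead invoke the approximate reduced score equation $\p_\al\mbbh_n^{\mathrm{red}}(\aes)\approx0$ to fold the linear term into the quadratic one and arrive directly at $\tfrac12\hat u_{n,\al}^\top\Gam_\al\hat u_{n,\al}$. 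Your route is slightly more compact but costs two extra verifications: the block-diagonality of $\Gam$ in the $(\al,\lam)$ split (which you check correctly via $\int\p_\theta F_{\tz}\,dz=0$), and control of the residual $\les nh_n\int\p_\theta F_{\tes}\vp_n\,dz$. The paper's route sidesteps both, since it never needs $\aes$ to be a critical point of the reduced contrast.

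One bookkeeping caveat: the bound you cite for the residual, $\int|\p_\theta F_{\tes}|(1-\vp_n)\,dz=O(h_n^{C_6})$, is not literally among the inequalities in Assumption~\ref{jsupp}-(3), which only list $\p_\theta^k\log F_\theta$ for $k\in\{0,2\}$ and $\p_\theta^2 F_{\tz}$. The first-derivative analogue is of the same nature and holds under the same hypotheses on $\vp_n$, but you should either state it explicitly or note that it is implicit in the framework of \cite{OgiYos11}. Likewise, your passage from $o_p(1)$ to $o(1)$ in $L^1$ is correct in spirit, but the paper makes this explicit by bounding the remainder terms $R_{2,n},R_{3,n}$ directly in $L^1$ using the moment estimates of \cite[Lemmas~4 and~6]{OgiYos11} together with Sobolev and H\"older inequalities, rather than appealing to a generic uniform-integrability statement.
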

\begin{proof}
    Let $$\mbbh_{3,n}(\theta)=\sumk \log F_\theta(\D_k X)\varphi_n(\D_k X)1_{L_n^c}(\D_k X),$$
and $\bar{\mbbh}_n(\al)=\mbbh_{1,n}(\al)+\mbbh_{3,n}(\theta)$.
Then it follows that
$$\mbbh_n(\aes,\les)-\mbbh_n(\az,\lz)-\tilde{b}_{1,\lam}=\bmbbh_n(\aes)-\bmbbh_n(\az).$$
From Taylor expansion, we have
    \begin{align*}
        &\bmbbh_n(\aes)-\bmbbh_n(\az)\\
        &=\frac{1}{\sqrt{n}}\p_\sig\mbbh_{1,n}(\az)[\hat{u}_{n,\sig}]+\frac{1}{\sqrt{nh}}\p_\theta\bmbbh_n(\az)[\hat{u}_{n,\theta}]-\frac{1}{2}\Gam_\sig[\hat{u}_{n,\sig}^{\otimes2}]-\frac{1}{2}\Gam_\theta[\hat{u}_{n,\theta}^{\otimes2}]+R_{2,n}+R_{3,n},
    \end{align*}
    where $\hat{u}_{n,\sig}=\sqrt{n}(\hat{\sig}_n-\sig_0)$, $\hat{u}_{n,\theta}=\sqrt{nh_n}(\tes-\tz)$, and the reminder term $R_{2,n}$ and $R_{3,n}$ is defined by
    \begin{align*}
        R_{2,n}&=\frac{1}{2}\Big(\frac{1}{n}\p_\sig^2\mbbh_{1,n}(\az)+\Gam_\sig\Big)[\hat{u}_{n,\sig}^{\otimes2}]+\frac{1}{2}\Big(\frac{1}{nh_n}\p_\theta^2\bmbbh_n(\az)+\Gam_\theta\Big)[\hat{u}_{n,\theta}^{\otimes2}]\\
        &+\frac{1}{n\sqrt{h_n}}\p_\sig\p_\al\mbbh_{1,n}(\az)[\hat{u}_{n,\sig},\hat{u}_{n,\theta}],\\
        R_{3,n}&=\frac{1}{6}\p_\al^3\bmbbh_n(\check{\al}_n)[(\aes-\az)^{\otimes3}],
    \end{align*}
    with $\check{\al}_n=s\az+(1-s)\aes$ for a random $s\in(0,1)$.
    From the estimates in \cite[Lemma 4 and Lemma 6]{OgiYos11}, 
    the moment convergence of the QMLE, Sobolev's inequality, and H\"{o}lder's inequality yield that
    $$E\big[\big|R_{2,n}\big|+\big|R_{3,n}\big|\big]=o(1).$$
    By applying Taylor expansion again, we get
    $$\hat{u}_{n,\sig}=\frac{1}{\sqrt{n}}\Gam_\sig^{-1}\p_\sig\mbbh_{1,n}(\az)+r_n,$$
    where $E[|r_n|]=o(1)$.
    Since $\frac{1}{\sqrt{n}}\p_\sig\mbbh_{1,n}(\az)\cil N(0,\Gam_\sig)$ and $E[|\frac{1}{\sqrt{n}}\p_\sig\mbbh_{1,n}(\az)|^q]<\infty$ for any $q>0$ from \cite{OgiYos11}, we obtain
    \begin{equation}
        \nn E\Bigg[\frac{1}{\sqrt{n}}\p_\sig\mbbh_{1,n}(\az)[\hat{u}_{n,\sig}]\Bigg]=\Tr\Bigg(\Gam^{-1}_\sig E\Bigg[\Bigg(\frac{1}{\sqrt{n}}\p_\sig\mbbh_{1,n}(\az)\Bigg)^{\otimes 2}\Bigg]\Bigg)+o(1)=\dim \Theta_\sig+o(1).
    \end{equation}
    In the same way, it follows that $E[\frac{1}{\sqrt{nh}}\p_\theta\bmbbh_n(\az)[\hat{u}_{n,\theta}]]=\dim\Theta_\theta+o(1)$.
    Finally, the moment convergence of the QMLE gives 
    $$E\Bigg[\frac{1}{2}\Gam_\sig[\hat{u}_{n,\sig}^{\otimes2}]+\frac{1}{2}\Gam_\theta[\hat{u}_{n,\theta}^{\otimes2}]\Bigg]=-\frac{1}{2}\dim\Theta+o(1),$$
    and hence the desired result follows.
\end{proof}

Let $q_k^{(1)}(\al)=\lam^{-1}q_{h_n}^{(1)}(z_{k-1},z_k;\al,\lam)$.
Since $\log \tilde{q}_k(\al,\lam)$ is decomposed as:
\begin{align*}
    \log \tilde{q}_k(\al,\lam)=(-\lam h_n+\log p_k(\al))1_{L_n}(\D z_k)+(-\lam h_n+\log \lam+\log q_{k}^{(1)}(\al))1_{L_n^c}(\D z_k),
\end{align*}
the estimates in the previous section and Proposition \ref{eKL} give
\begin{align*}
    &\Ez\Big[l_n(\mbfz_n,\aes,\les)\Big]-\Ez\Big[l_n(\mbfz_n,\az,\lz)\Big]-\tilde{b}_{2,\lam}=e^{-\lz h_n}(\mbbu_{1,n}+\lz\mbbu_{2,n})+o(1),
\end{align*}
where $\tilde{b}_{2,\lam}$, $\mbbu_{1,n}$, and $\mbbu_{2,n}$ are given by
\begin{align*}
    &\mbbu_{1,n}=\sumk\intd\intd \big(\log p_k(\aes)-\log p_k(\az)\big)p_k(\az)1_{L_n}(\D z_k)dz_k \pi_{k-1}(dz_{k-1}),\\
    &\mbbu_{2,n}=\sumk\intd\intd \Big(\log q_k^{(1)}(\aes)-\log q_k^{(1)}(\az)\Big) q_k^{(1)}(\az)1_{L_n^c}(\D z_k)dz_k \pi_{k-1}(dz_{k-1}),\\
    &\tilde{b}_{2,\lam}= e^{-\lz h_n}\log\frac{\les}{\lz}\sumk \int\int q_{h_n}^{(1)}(x,y;\az,\lz) 1_{L_n^c}(y-x)dy\pi_{k-1}(dx)+nh_n(\lz-\les).
\end{align*}

We will evaluate these random quantities separately below.

\begin{Lem}
    Under Assumptions \ref{inidist}-\ref{indentifiability}, it follows that
    $$E\Big[\tilde{b}_{1,\lam}-\tilde{b}_{2,\lam}\Big]=1+o(1).$$
\end{Lem}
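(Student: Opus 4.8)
The plan is to use the first-order condition for $\les$ to collapse both bias terms onto a single weighted jump count, and then to isolate one covariance that produces the contribution $1$ of the one-dimensional intensity parameter. Write
$$
J_n=\sumk\varphi_n(\D_k X)1_{L_n^c}(\D_k X),\quad g_n(\theta)=\intd F_\theta(z)\varphi_n(z)dz,
$$
and $K_n=e^{-\lz h_n}\sumk\intd\intd q_{h_n}^{(1)}(x,y;\az,\lz)1_{L_n^c}(y-x)dy\,\pi_{k-1}(dx)$. Since $\les$ maximizes $\mbbh_n$, on the event $\{\les\in(\underline{\lam},\bar{\lam})\}$ (of probability tending to $1$, the complement being negligible by moment convergence) the score identity $\p_\lam\mbbh_{2,n}(\les,\tes)=\les^{-1}J_n-nh_n g_n(\tes)=0$ gives $nh_n\les g_n(\tes)=J_n$. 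Substituting this into $\tilde{b}_{1,\lam}$ and using $\int F_\theta=1$ (so $1-g_n(\theta)=\int F_\theta(1-\varphi_n)$), I obtain
$$
\tilde{b}_{1,\lam}-\tilde{b}_{2,\lam}=\log\frac{\les}{\lz}\,(J_n-K_n)+nh_n\les\!\intd F_{\tes}(1-\varphi_n)-nh_n\lz\!\intd F_{\tz}(1-\varphi_n).
$$
Splitting the correction as $\les\int(F_{\tes}-F_{\tz})(1-\varphi_n)+(\les-\lz)\int F_{\tz}(1-\varphi_n)$, each truncation integral is $O(h_n^{C_6})$ by Assumption \ref{jsupp}-(3); combined with $\les\to\lz$, $\les-\lz=O_p((nh_n)^{-1/2})$ and the balance condition, its expectation is $o(1)$.

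It then remains to show $E[\log(\les/\lz)(J_n-K_n)]=1+o(1)$. First I Taylor-expand $\log(\les/\lz)=(\les-\lz)/\lz+\tfrac12(\les-\lz)^2/\check{\lam}^2$ with $\check{\lam}\in(\underline{\lam},\bar{\lam})$; by Cauchy--Schwarz together with $E[(\les-\lz)^4]=O((nh_n)^{-2})$ (moment convergence) and $E[(J_n-K_n)^2]=O(nh_n)$, the quadratic remainder times $(J_n-K_n)$ has expectation $O((nh_n)^{-1/2})=o(1)$. For the linear term I invoke the standard QML linearization of the $\lam$-component. Because $\Gam=\diag(\Gam_\sig,\Gam_{\theta^\star})$ with $\Gam_{\theta^\star}$ block-diagonal in the $\lam$-direction — the cross informations vanish, since $\intd\p_{\theta_j}F_{\tz}=\p_{\theta_j}\intd F_\theta|_{\tz}=0$ and $\p_\lam a=0$ — and $\Gam_\lam=\intd(\p_\lam f_{\tz})^2/f_{\tz}=1/\lz$, one has $\les-\lz=(nh_n)^{-1}(J_n-c_n)+r_n$ with $c_n=\lz nh_n g_n(\tz)$ and $E[r_n^2]=o((nh_n)^{-1})$, the leading part being exactly $\Gam_\lam^{-1}(nh_n)^{-1}\p_\lam\mbbh_{2,n}(\lz,\tz)$.

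Plugging in, $E[(\les-\lz)(J_n-K_n)]=(nh_n)^{-1}E[(J_n-c_n)(J_n-K_n)]+o(1)$, the remainder again bounded by Cauchy--Schwarz. Writing $J_n-K_n=(J_n-c_n)+(c_n-K_n)$ and using $E[J_n-c_n]=o(\sqrt{nh_n})$ and $c_n-K_n=o(\sqrt{nh_n})$ (both normalizers equal $\lz nh_n$ up to the truncation and the higher-order density corrections, via \eqref{pj1c}, Corollary \ref{testi} and Assumption \ref{jsupp}), the cross term is $o(nh_n)$, so $E[(\les-\lz)(J_n-K_n)]=(nh_n)^{-1}E[(J_n-c_n)^2]+o(1)$. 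Finally $(nh_n)^{-1/2}\p_\lam\mbbh_{2,n}(\lz,\tz)=\lz^{-1}(nh_n)^{-1/2}(J_n-c_n)\cil N(0,\Gam_\lam)$ with convergence of second moments, whence $(nh_n)^{-1}E[(J_n-c_n)^2]=\lz^2 E[(\lz^{-1}(nh_n)^{-1/2}(J_n-c_n))^2]\to\lz^2\Gam_\lam=\lz$. Therefore $E[(\les-\lz)(J_n-K_n)]=\lz+o(1)$ and $E[\log(\les/\lz)(J_n-K_n)]=\lz^{-1}\lz+o(1)=1+o(1)$.

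The hard part is the evaluation of $E[(\les-\lz)(J_n-K_n)]$: one must pin the centered second moment of the weighted jump count, $E[(J_n-c_n)^2]=\lz nh_n(1+o(1))$, which is precisely the information identity $\Gam_\lam=1/\lz$ read at the level of moments and which, through $\Gam_\lam^{-1}\Gam_\lam=1$, manufactures the target constant. The surrounding bookkeeping — that the three normalizations $c_n$, $K_n$ and $E[J_n]$ all coincide to order $o(\sqrt{nh_n})$, so that $J_n-K_n$ may be re-centered at $c_n$, and that every truncation integral $\int F_\theta(1-\varphi_n)$ stays negligible after the factor $nh_n$ — is routine but delicate and rests essentially on Assumption \ref{jsupp}.
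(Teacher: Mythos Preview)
Your route is genuinely different from the paper's. The paper never combines $\tilde b_{1,\lam}$ and $\tilde b_{2,\lam}$: it computes $E[\tilde b_{1,\lam}]=\tfrac12$ and $E[\tilde b_{2,\lam}]=-\tfrac12$ separately, each by a direct Taylor expansion in $(\lam,\theta)$ around $(\lz,\tz)$ together with the linearization $\hat u_{n,\lam}=\Gam_\lam^{-1}(nh_n)^{-1/2}\p_\lam\mbbh_{2,n}(\az,\lz)+r_n$ and moment convergence. Your device of substituting the $\lam$-score identity (equivalently, using $g_n(\theta)=1-\int F_\theta(1-\varphi_n)$) to collapse the difference onto $\log(\les/\lz)(J_n-K_n)$ is a neat alternative: it makes the origin of the constant $1$ explicit as $\Gam_\lam^{-1}\Gam_\lam$ via the second moment of $J_n-c_n$, whereas in the paper it emerges as $1-\tfrac12-(-\tfrac12)$. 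Your observation that $\Gam_{\theta^\star}$ is block-diagonal in the $\lam$-direction, and the ensuing computation $E[(\les-\lz)(J_n-K_n)]=\lz+o(1)$, are correct and parallel the paper's handling of the score.

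One point deserves care. You assert that $\int F_{\tz}(1-\varphi_n)=O(h_n^{C_6})$ ``by Assumption~\ref{jsupp}-(3)'', but that assumption controls $\int|\log F_\theta|F_{\tz}(1-\varphi_n)$, $\int|F_\theta-F_{\tz}|(1-\varphi_n)$ and $\int|\p_\theta^2F_{\tz}|(1-\varphi_n)$, not the plain mass $\int F_{\tz}(1-\varphi_n)$; nor does the balance condition by itself force $nh_n^{1+C_6}\to0$ for an unspecified $C_6>0$. The paper sidesteps this by keeping the term $-nh_n\les g_n(\tes)$ inside $\tilde b_{1,\lam}$ and Taylor-expanding it together with the rest, so that the analogue of your correction enters only through $\p_\theta g_n(\tz)=-\int\p_\theta F_{\tz}(1-\varphi_n)$ and the second-order piece $\int\p_\theta^2 F_{\tz}(1-\varphi_n)$, quantities closer to what Assumption~\ref{jsupp}-(3) actually bounds. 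Your overall strategy still goes through, but you should either derive the needed bound on $\int F_{\tz}(1-\varphi_n)$ from the construction of $\varphi_n$ in Assumption~\ref{jsupp}-(2) or, more simply, skip the score substitution and Taylor-expand $nh_n[\lz g_n(\tz)-\les g_n(\tes)]$ directly as the paper does.
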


\begin{proof}
    From Assumption \ref{jdist} and Assumption \ref{jsupp}, for all $l\in\{1,2,3\}$, an easy calculation leads to
    \begin{align*}
        \int |\p_\theta^l F_{\theta}(z)|\varphi_n(z)dz\leq C\int (1+|z|^C)e^{-C|z|}dz<\infty.
    \end{align*}
    Let $\hat{u}_{n,\lam}=\sqrt{nh_n}(\les-\lz)$.
    Then from a similar argument to the proof of Lemma \ref{BQL}, we get
    \begin{align}
        \nn E[\tilde{b}_{1,\lam}]&=E\Bigg[\frac{1}{\sqrt{nh_n}}\p_\lam \mbbh_{2,n}(\az,\lz)\hat{u}_{n,\lam}\Bigg]-\frac{1}{2\lz}E[\hat{u}_{n,\lam}^2]+o(1)\\
        \label{bl1}&=\frac{1}{\lz}E\Bigg[\Bigg(\frac{1}{\sqrt{nh_n}}\p_\lam \mbbh_{2,n}(\az,\lz)\Bigg)^2\Bigg]-\frac{1}{2}+o(1)=\frac{1}{2}.
    \end{align}
    By definition of $q_{h_n}^{(1)}(x,y;\az,\lz)$, we have
    \begin{align*}
        &\int\int q_{h_n}^{(1)}(x,y;\az,\lz) 1_{L_n^c}(y-x)dy\pi_{k-1}(dx)\\
        &=P(N_{\tz}((t_{k-1},t_k]\times\mbbr^d)=1)-\int P(N_{\tz}((t_{k-1},t_k]\times\mbbr^d)=1 \ \text{and} \ |\D_k Z|\leq h_n^\rho|Z_{t_{k-1}}=x)\pi_{k-1}(dx)\\
        &=\lz h_n-\int P(N_{\tz}((t_{k-1},t_k]\times\mbbr^d)=1 \ \text{and} \ |\D_k Z|\leq h_n^\rho|Z_{t_{k-1}}=x)\pi_{k-1}(dx).
    \end{align*}
    Thus \eqref{pj1c}, Taylor expansion and the moment convergence of the QMLE imply that
    \begin{align}
        \nn E\big[\tilde{b}_{2,\lam}\big]&=-\frac{1}{2\lz}E\big[\hat{u}_{n,\lam}^2\big]+\sqrt{nh_n}(e^{-\lz h_n}-1)E[\hat{u}_{n,\lam}]+\frac{\lz}{3\check{\lam}_n^3}E\Big[\hat{u}_{n,\lam}^3\Big]+o(1)\\
        \label{bl2}&=-\frac{1}{2}+o(1),
    \end{align}
    with $\check{\lam}_n=s\lz+(1-s)\les$ for a random $s\in(0,1)$.
    In the last equality, we used the elementary inequality $|e^{-x}-1|\leq x$ for $x>0$.
    The desired result follows from \eqref{bl1} and \eqref{bl2}.
\end{proof}

Introduce $p_\theta\times p_\theta$-matrix $\Gam_\theta=\Gam_{\theta,1}+\Gam_{\theta_2}$ defined by
\begin{align*}
    [\Gamma_{\theta,1}]_{ij}=\int (\partial_{\theta_i} a)^\top S^{-1}(\partial_{\theta_j} a)(x,\alpha_0)d\pi_0(x), \ \ [\Gamma_{\theta,2}]_{ij}= \int \frac{\partial_{\theta_i} f_{\theta_0}\partial_{\theta_j} f_{\theta_0}}{f_{\theta_0}} 1_{\{f_{\theta_0}\neq 0\}}(z)dz.
\end{align*}

\begin{Lem}\label{Lestu1}
    Under Assumptions \ref{inidist}-\ref{indentifiability}, we have
    \begin{align}\label{estu1}
    E\bigg[\mbbu_{1,n}\bigg]=\frac{1}{2}p_\sig+\frac{1}{2}\Tr\Big(\Gam_{\theta,1}\Gam^{-1}_\theta\Big)+o(1).
    \end{align}
\end{Lem}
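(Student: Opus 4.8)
The plan is to freeze the threshold QMLE $\aes=(\tes,\hat{\sig}_n)$ (once $\aes$ is fixed, the inner integrals defining $\mbbu_{1,n}$ are deterministic) and to Taylor-expand $\log p_k(\aes)-\log p_k(\az)$ in $\al$ around $\az$ up to fifth order, exactly as in the proof of Lemma \ref{lem:negli}. First I would dispose of the first-order term: integrating $\p_\al\log p_k(\az)$ against $p_k(\az)1_{L_n}(\D z_k)$ over $z_k$, the normalization $\intd\p_\al p_k(\az)\,dz_k=\p_\al\intd p_k(\az)\,dz_k=0$ annihilates the leading part, while the leftover $1_{L_n^c}$ piece is super-polynomially small in $h_n$ by the Aronson-type bound underlying Corollary \ref{testi}. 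Paired with $\aes-\az$, summed over the $n$ steps, and combined with the moment convergence of the QMLE and Assumption \ref{balance-cond}, this is $o(1)$.

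The dominant contribution is the second-order term. Using the identity $\intd\p_\al^2\log p_k(\az)\,p_k(\az)\,dz_k=-\intd\big(\p_\al\log p_k(\az)\big)^{\otimes2}p_k(\az)\,dz_k$ (again up to a negligible $1_{L_n^c}$ correction), this term is governed by $\frac{1}{2}\sumk\intd I^{(k)}(z_{k-1})\,\pi_{k-1}(dz_{k-1})\big[(\aes-\az)^{\otimes2}\big]$, where $I^{(k)}$ denotes the one-step transition-density Fisher information of the continuous model $X^{\al,c}$ at $\az$. Splitting $\al=(\theta,\sig)$, I would invoke the high-frequency expansions of $\p_\al p_{h_n}$ and $\p_\al^2 p_{h_n}$ obtained via Malliavin calculus and stochastic flows (the computation of \cite{Uch10}, which the introduction notes transfers to the drift and diffusion parameters) to extract the leading orders $\frac{1}{n}\sumk\intd I^{(k)}_{\sig\sig}\,\pi_{k-1}\to\Gam_\sig$ and $\frac{1}{nh_n}\sumk\intd I^{(k)}_{\theta\theta}\,\pi_{k-1}\to\Gam_{\theta,1}$ by the ergodicity of $X$, the $\theta\sig$ cross-block being of strictly smaller order because of the rate gap between $\hat{u}_{n,\sig}$ and $\hat{u}_{n,\theta}$. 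Rescaling with $\hat{u}_{n,\sig}=\sqrt{n}(\hat{\sig}_n-\sig_0)$ and $\hat{u}_{n,\theta}=\sqrt{nh_n}(\tes-\tz)$ reduces the second-order term to $\frac{1}{2}\big(\Gam_\sig[\hat{u}_{n,\sig}^{\otimes2}]+\Gam_{\theta,1}[\hat{u}_{n,\theta}^{\otimes2}]\big)+o_p(1)$.

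Taking expectations and applying the moment convergence $E[f(\hat{u}_n)]\to E[f(u)]$ with $u\sim N(0,\Gam^{-1})$ then gives $E\big[\Gam_\sig[\hat{u}_{n,\sig}^{\otimes2}]\big]\to\Tr(\Gam_\sig\Gam_\sig^{-1})=p_\sig$. For the drift block the key point is the block structure of $\Gam$: since $a$ does not depend on $\lam$ and $\intd\p_\theta F_{\tz}(z)\,dz=0$, the matrix $\Gam_{\theta^\star}$ is block-diagonal in $(\theta,\lam)$ with $\theta\theta$-block $\Gam_\theta=\Gam_{\theta,1}+\Gam_{\theta,2}$, so that $\mathrm{Cov}(\hat{u}_{n,\theta})\to\Gam_\theta^{-1}$ and hence $E\big[\Gam_{\theta,1}[\hat{u}_{n,\theta}^{\otimes2}]\big]\to\Tr(\Gam_{\theta,1}\Gam_\theta^{-1})$. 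The subtlety is that the continuous model supplies only $\Gam_{\theta,1}$ to the quadratic form, whereas the jumps also sharpen $\tes$ (contributing $\Gam_{\theta,2}$ to its precision), which is precisely why the drift term is the genuine trace $\Tr(\Gam_{\theta,1}\Gam_\theta^{-1})$ rather than $p_\theta$. Combining the two blocks yields \eqref{estu1}.

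The remaining third- through fifth-order Taylor terms are handled as in Lemma \ref{lem:negli}: the ratio bounds of Lemma \ref{derpar} (together with \cite[Proposition A.2]{OgiUeh23} for the $p_k$ derivatives), H\"{o}lder's inequality, and the moment convergence of the QMLE reduce each to a product of the form $h_n^{\ep}(1+|z_{k-1}|^C)$ against a power of $|\aes-\az|$, whose summed expectation is controlled by the bookkeeping estimate $\frac{n}{n^{i/2}(nh_n)^{(5-i)/2}}\lesssim 1$ from Assumption \ref{balance-cond} already used at the end of Lemma \ref{lem:negli}. The main obstacle I anticipate is the second paragraph: pinning down the leading-order high-frequency asymptotics of the transition-density Fisher information of $X^{\al,c}$ — the $\sig\sig$ block at order $n$, the $\theta\theta$ block at order $nh_n$, and the negligibility of the cross-block — since these require the Malliavin/flow expansions of $\p_\al p_{h_n}$ and careful control within the narrow sampling window of Assumption \ref{balance-cond}; by contrast, the higher-order remainder is routine given the earlier lemmas.
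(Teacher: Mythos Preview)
Your approach is essentially the same as the paper's: Taylor-expand $\log p_k(\aes)-\log p_k(\az)$ around $\az$, kill the first-order term (the paper invokes Gobet's Malliavin representation of $\p_\al p_k/p_k$ as a conditional Skorohod integral rather than your differentiation-under-the-integral argument, but the outcome $\int\p_\al p_k(\az)\,dz_k=0$ is identical, and the $1_{L_n^c}$ correction is handled the same way), identify the second-order term with the high-frequency diffusion Fisher information (the paper cites the explicit score approximations of \cite[Lemma D.1]{OgiUeh23} instead of \cite{Uch10}, but the content is the same), and then apply the moment convergence of the QMLE together with the block structure of $\Gam_{\theta^\star}$. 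The only structural difference is that the paper stops the Taylor expansion at third order and controls the cubic remainder directly via \cite[Lemma 2 (iii)]{Uch10}; your fifth-order expansion in the style of Lemma \ref{lem:negli} is not needed here because $\mbbu_{1,n}$ involves only the continuous transition density $p_k$, for which the relevant derivative moment bounds are already available at low order.
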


\begin{proof}
By Taylor expansion, we have
\begin{align*}
    &\mbbu_{1,n}\\
    &=\Bigg[\sumk\intd\intd \frac{\p_\al p_k(\az)}{p_k(\az)}p_k(\az)1_{L_n}(\D z_k)dz_k \pi_{k-1}(dz_{k-1})\Bigg]\big[\hat{v}_{n,\al}\big]\\
    &+\Bigg[\frac{1}{2n}\sumk\intd\intd \frac{\big(\p_\sig p_k(\az)\big)^{\otimes2}-p_k(\az)\p_\sig^2p_k(\az)}{p_k(\az)}1_{L_n}(\D z_k)dz_k\pi_{k-1}(dz_{k-1})\Bigg]\big[\hat{u}_{n,\sig}^{\otimes2}\big]\\
    &+\Bigg[\frac{1}{2nh_n}\sumk\intd\intd \frac{\big(\p_\theta p_k(\az)\big)^{\otimes2}-p_k(\az)\p_\theta^2p_k(\az)}{p_k(\az)}1_{L_n}(\D z_k)dz_k\pi_{k-1}(dz_{k-1})\Bigg]\big[\hat{u}_{n,\theta}^{\otimes2}\big]\\
    &+\Bigg[\frac{1}{n\sqrt{h_n}}\sumk\intd\intd \frac{\p_\theta p_k(\az)\p_\sig p_k(\az)-p_k(\az)\p_\theta\p_\sig p_k(\az)}{p_k(\az)}1_{L_n}(\D z_k)dz_k\pi_{k-1}(dz_{k-1})\Bigg]\big[\hat{u}_{n,\sig},\hat{u}_{n,\al}\big]\\
    &+\Bigg[\frac{1}{6}\sumk\intd\intd \p_\al^3 \bigg(\log p_k(\check{\al}_n)\bigg)p_k(\az)1_{L_n}(\D z_k)dz_k\pi_{k-1}(dz_{k-1})\Bigg]\big[(\hat{v}_n)^{\otimes3}\big],
\end{align*}
where $\check{\al}_n=s\az+(1-s)\aes$ for a random $s\in(0,1)$.
It follows from \cite[Lemma 2]{OgiYos11}, \cite[Proposition 5.2]{OgiUeh23} and Cauchy-Schwartz inequality that
\begin{align}
    &\nn\Bigg|\intd\intd \frac{\p_\al p_k(\az)}{p_k(\az)}p_k(\az)1_{L_n^c}(\D z_k)dz_k \pi_{k-1}(dz_{k-1})\Bigg|\\
    &\nn\leq \intd\sqrt{\intd\bigg|\frac{\p_\al p_k(\az)}{p_k(\az)}\bigg|^2p_k(\az)dz_k}\sqrt{\intd p_k(\az)1_{L_n^c}(\D z_k)dz_k}\pi_{k-1}(dz_{k-1})\\
    &\label{pest}\lesssim h_n^p,
\end{align}
for all $p\geq 2$.
We will often use such estimates on $L_n^c$ below.
Let $\del$ be the Hitsuda-Skorohod integral (the divergence operator).
From \cite[Proposition 4.1]{Gob01}, we have
\begin{align*}
    &\intd\intd \frac{\p_\al p_k(\az)}{p_k(\az)}p_k(\az)dz_k \pi_{k-1}(dz_{k-1})\\
    &=\intd\intd \frac{1}{h_n}\suml E\Bigg[\del\big(\p_{\al_{l_1}} Z^c_{h_n,l_1}\cdot U_{l_1}\big)\Bigg|Z^c_0=z_{k-1},Z^c_{h_n}=z_k\Bigg]p_k(\az)dz_k \pi_{k-1}(dz_{k-1})\\
    &= \intd\frac{1}{h_n}\suml E\bigg[\del\big(\p_{\al_{l_1}} Z^c_{h_n,l_1}\cdot U_{l_1}\big)\bigg|Z^c_0=z_{k-1}\bigg]\pi_{k-1}(dz_{k-1})\\
    &=0,
\end{align*}
and thus we get
\begin{equation}
    \Bigg|\sumk\intd\intd \frac{\p_\al p_k(\az)}{p_k(\az)}p_k(\az)1_{L_n}(\D z_k)dz_k \pi_{k-1}(dz_{k-1})\Bigg|=o(1).
\end{equation}
In a similar manner to \eqref{pest} and the proof of \cite[Lemma 2 (ii)]{Uch10}, we obtain
\begin{align*}
    &\Bigg|\frac{1}{n}\sumk\intd\intd \p_\sig^2p_k(\az)1_{L_n}(\D z_k)dz_k\pi_{k-1}(dz_{k-1})\Bigg|=o(1),\\
    &\Bigg|\frac{1}{nh_n}\sumk\intd\intd \p_\theta^2p_k(\az)1_{L_n}(\D z_k)dz_k\pi_{k-1}(dz_{k-1})\Bigg|=o(1),\\
    &\Bigg|\frac{1}{n\sqrt{h_n}}\sumk\intd\intd \p_\sig\p_\theta p_k(\az)1_{L_n}(\D z_k)dz_k\pi_{k-1}(dz_{k-1})\Bigg|=o(1).
\end{align*}
From \cite[Lemma D.1]{OgiUeh23}, there exist positive constant $C$ and positive decreasing sequence $\{\chi_n\}$ such that $\chi_n\to0$ and for all $z_{k-1}\in\mbbr^d$ and large enough $n$, 
\begin{align*}
    &\intd \Bigg|\frac{\p_\sig p_k(\az)}{p_k(\az)}+\frac{1}{2h_n} \p_\sig S^{-1}_{k-1}\Big[\big(\D z_k\big)^{\otimes2}\Big]-\frac{1}{2}\Tr\big(\p_\sig S^{-1}_{k-1}S_{k-1}\big)\Bigg|^21_{L_n}(\D z_k)p_k(\az)dz_k\\
    &\leq \chi_n\big(1+|z_{k-1}|^C\big),
    \end{align*}
    and
    $$\intd \Bigg|\frac{\p_\theta p_k(\az)}{p_k(\az)}-\p_\theta a^\top_{k-1}S^{-1}_{k-1}\D z_k\Bigg|^21_{L_n}(\D z_k)p_k(\az)dz_k\leq h_n\chi_n\big(1+|z_{k-1}|^C\big).$$
Combined with the arguments after \cite[Lemma D.2]{OgiUeh23} and Cauchy-Schwartz inequality, we obtain
\begin{align*}
    &\intd \frac{\big(\p_\sig p_k(\az)\big)^{\otimes2}}{p_k(\az)}1_{L_n}(\D z_k)dz_k\\
    &=E_{k-1}\Bigg[ \Bigg(\frac{1}{2h_n} \p_\sig S^{-1}_{k-1}\Big[\big(\D_k Z^c\big)^{\otimes2}\Big]-\frac{1}{2}\Tr\big(\p_\sig S^{-1}_{k-1}S_{k-1}\big)\Bigg)^{\otimes 2}\Bigg]+\chi_n\big(1+|z_{k-1}|^C\big)\\
    &=\bigg[\Tr\big(\p_{\sig^{(i)}} S_{k-1}^{-1}S_{k-1}\p_{\sig^{(j)}} S_{k-1}^{-1}S_{k-1}\big)\bigg]_{i,j}+\chi_n\big(1+|z_{k-1}|^C\big)
\end{align*}
for any $z_{k-1}\in\mbbr^d$.
Hence the ergodic theorem leads to 
\begin{align*}
    &\frac{1}{n}\sumk\intd\intd \frac{\big(\p_\sig p_k(\az)\big)^{\otimes2}}{p_k(\az)}1_{L_n}(\D z_k)dz_k\pi_{k-1}(dz_{k-1})\\
    &=\frac{1}{n}\sumk E\Bigg[\bigg[\Tr\big(\p_{\sig^{(i)}} S_{k-1}^{-1}S_{k-1}\p_{\sig^{(j)}} S_{k-1}^{-1}S_{k-1}\big)\bigg]_{i,j}\Bigg]+o(1)\\
    &\to \Gam_\sig.
\end{align*}
Similarly, for any $z_{k-1}\in\mbbr^d$, we get
\begin{align*}
    &\intd \frac{\big(\p_\theta p_k(\az)\big)^{\otimes2}}{p_k(\az)}1_{L_n}(\D z_k)dz_k\\
    &=E_{k-1}\Bigg[\Bigg(\p_\theta a^\top_{k-1}S^{-1}_{k-1}\D_k Z^c\Bigg)^{\otimes 2}\Bigg]+h_n\chi_n\big(1+|z_{k-1}|^C\big)\\
    &=h_n\p_\theta a^\top_{k-1}S^{-1}_{k-1}\p_\theta a_{k-1}+h_n\chi_n\big(1+|z_{k-1}|^C\big).
\end{align*}
where $Z^c$ is an independent copy of $X^c$.
Hence the ergodic theorem also leads to 
\begin{align*}
    &\frac{1}{nh_n}\sumk\intd\intd \frac{\big(\p_\theta p_k(\az)\big)^{\otimes2}}{p_k(\az)}1_{L_n}(\D z_k)dz_k\pi_{k-1}(dz_{k-1})\\
    &=\frac{1}{n}\sumk E\Bigg[\p_\theta a^\top_{k-1}S^{-1}_{k-1}\p_\theta a_{k-1}\Bigg]+o(1)\\
    &\to \Gam_{\theta,1}.
\end{align*}
In the same way, we obtain 
\begin{align*}
    \frac{1}{n\sqrt{h_n}}\sumk\intd\intd \frac{\p_\theta p_k(\az)\p_\sig p_k(\az)}{p_k(\az)}1_{L_n}(\D z_k)dz_k\pi_{k-1}(dz_{k-1})=o(1).
\end{align*}
Applying the estimates in \cite[Lemma 2 (iii)]{Uch10} and the moment convergence of the QMLE, we get \eqref{estu1}.
\end{proof}

\begin{Lem}\label{Lestu2}
    Under Assumptions \ref{inidist}-\ref{indentifiability}, we have
    \begin{align}\label{estu2}
    E\bigg[\mbbu_{2,n}\bigg]=\frac{1}{2\lz}\Tr\Big(\Gam_{\theta,2}\Gam_\theta^{-1}\Big)+o(1).
    \end{align}
\end{Lem}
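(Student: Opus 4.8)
The plan is to imitate the proof of Lemma \ref{Lestu1} almost verbatim, replacing the no-jump density $p_k$ and the event $1_{L_n}$ by the one-jump density $q_k^{(1)}$ and $1_{L_n^c}$, and then to track which information blocks survive the normalisations. First I would Taylor expand $\log q_k^{(1)}(\aes)-\log q_k^{(1)}(\az)$ around $\az$ in the direction $\hat v_{n,\al}=\aes-\az$ up to fifth order, writing
$$\mbbu_{2,n}=\mathrm{I}_n[\hat v_{n,\al}]+\mathrm{II}_n+\mathrm{III}_n,$$
where $\mathrm{I}_n=\sumk\intd\intd\p_\al q_k^{(1)}(\az)1_{L_n^c}(\D z_k)dz_k\,\pi_{k-1}(dz_{k-1})$ is the first-order coefficient, $\mathrm{II}_n$ gathers the quadratic terms in $\hat u_{n,\sig}$ and $\hat u_{n,\theta}$, and $\mathrm{III}_n$ is the higher-order remainder. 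The first-order term is asymptotically negligible: since $\intd q_k^{(1)}(\al)dz_k=h_n$ does not depend on $\al$, we have $\intd\p_\al q_k^{(1)}(\az)dz_k=0$, hence $\intd\p_\al q_k^{(1)}(\az)1_{L_n^c}(\D z_k)dz_k=-\int_{L_n}\p_\al q_k^{(1)}(\az)dz_k$, which is bounded by $h_n^{1+\frac{(d+\gam)\rho\wedge1}{1+\eta}}(1+|z_{k-1}|^C)$ thanks to \eqref{dpj1c}; combined with the moment convergence of the QMLE and Assumption \ref{balance-cond}, $E[|\mathrm{I}_n[\hat v_{n,\al}]|]=o(1)$ exactly as in \eqref{neg1}.

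Next I would discard the subdominant quadratic terms. Because the mass of $q_k^{(1)}(\az)$ over $L_n^c$ is $O(h_n)$ (Theorem \ref{ktesti} and \eqref{pj1c}) while the $\sig$-score of the diffusion bridge is $O(1)$ in $L^2(q_k^{(1)})$, Lemma \ref{derpar} yields $\intd\frac{(\p_\sig q_k^{(1)}(\az))^{\otimes2}}{q_k^{(1)}(\az)}1_{L_n^c}(\D z_k)dz_k\lesssim h_n(1+|z_{k-1}|^C)$; the $\sig$-quadratic, normalised by $1/n$, is therefore $O(h_n)$ and vanishes, and the same bookkeeping together with Cauchy--Schwarz kills the $\theta$--$\sig$ cross term (normalised by $1/(n\sqrt{h_n})$). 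For the $\theta$-quadratic I would also drop the Hessian piece: since $\intd q_k^{(1)}(\al)dz_k=h_n$, we get $\intd\p_\theta^2 q_k^{(1)}(\az)1_{L_n^c}(\D z_k)dz_k=-\int_{L_n}\p_\theta^2 q_k^{(1)}(\az)dz_k$, negligible by Lemma \ref{derpar} and the small $L_n$-mass. Thus $\mathrm{II}_n$ reduces to $\frac{1}{2nh_n}\sumk\intd\intd\frac{(\p_\theta q_k^{(1)}(\az))^{\otimes2}}{q_k^{(1)}(\az)}1_{L_n^c}(\D z_k)dz_k\,\pi_{k-1}(dz_{k-1})[\hat u_{n,\theta}^{\otimes2}]$ plus terms of expectation $o(1)$.

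The heart of the argument is the identification of this $\theta$-Fisher block. Through the one-jump decomposition \eqref{tdecom} the $\theta$-score of $q_k^{(1)}$ splits into a jump part carried by $\p_\theta\log F_\theta(\D z_k)$ and a drift part carried by $\p_\theta a$ inside the two diffusion bridges. As $h_n\to0$ the bridges concentrate, the detected increment satisfies $\D z_k\approx x_2$, and I would establish the $L^2(q_k^{(1)})$-approximation $\frac{\p_\theta q_k^{(1)}(\az)}{q_k^{(1)}(\az)}=\p_\theta\log F_{\tz}(\D z_k)+O(h_n)$ on $L_n^c$, borrowing the score representations for $q^{(1)}$ from \cite{OgiUeh23} (analogous to the use of \cite[Lemma D.1]{OgiUeh23} for $p_k$) together with Lemma \ref{momratio} and the regularity in Assumption \ref{jdist}; the drift part contributes only $O(h_n^2)$ to the integral, hence $O(h_n)$ after the $1/(nh_n)$ normalisation. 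Using the localisation $\intd g(\D z_k)q_k^{(1)}(\az)dz_k=h_n\int g(z)F_{\tz}(z)dz+o(h_n)$ with $g$ the score square, one obtains
$$\intd\frac{(\p_\theta q_k^{(1)}(\az))^{\otimes2}}{q_k^{(1)}(\az)}1_{L_n^c}(\D z_k)dz_k=\frac{h_n}{\lz}\Gam_{\theta,2}+o(h_n)(1+|z_{k-1}|^C),$$
since $\Gam_{\theta,2}=\lz\int\frac{(\p_\theta F_{\tz})^{\otimes2}}{F_{\tz}}1_{\{F_{\tz}\neq0\}}dz$ (using $f_{\tz}=\lz F_{\tz}$ and $\int\p_\theta F_{\tz}\,dz=0$, which also makes $\Gam_{\theta^\star}$ block-diagonal in $(\theta,\lam)$, so $\hat u_{n,\theta}\cil N(0,\Gam_\theta^{-1})$). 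The ergodic theorem \eqref{ergthm} then gives $\frac{1}{nh_n}\sumk\intd(\cdots)\,\pi_{k-1}(dz_{k-1})\to\frac{1}{\lz}\Gam_{\theta,2}$, and the moment convergence $E[\hat u_{n,\theta}^{\otimes2}]\to\Gam_\theta^{-1}$ yields $E[\mathrm{II}_n]=\frac{1}{2\lz}\Tr(\Gam_{\theta,2}\Gam_\theta^{-1})+o(1)$.

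Finally, the remainder $\mathrm{III}_n$ is handled exactly as \eqref{reminder2} in Lemma \ref{lem:negli}: the fifth-order integrals of $\p_\al^5\log q_k^{(1)}(\check\al_n)$ weighted by $q_k^{(1)}(\az)$ are controlled by Lemma \ref{derpar} and Aronson estimates, and the scalings $n/(n^{i/2}(nh_n)^{(5-i)/2})\lesssim1$ afforded by Assumption \ref{balance-cond}, together with the moment convergence of the QMLE, force $E[|\mathrm{III}_n|]=o(1)$. Collecting the three contributions gives \eqref{estu2}. I expect the main obstacle to be the $L^2$-approximation of the one-jump score by the pure jump score $\p_\theta\log F_{\tz}(\D z_k)$, i.e. showing that the diffusion-bridge fluctuations and the drift $\theta$-dependence inside $q_k^{(1)}$ are genuinely of lower order on $L_n^c$; this is where the decomposition \eqref{tdecom}, the concentration of $p_{\tau_1}$ and $p_{h_n-\tau_1}$, and the moment-ratio bound of Lemma \ref{momratio} must be combined carefully.
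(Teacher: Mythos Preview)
Your proposal is correct and follows essentially the same route as the paper: Taylor expand in $\al$, kill the first-order term via $\intd\p_\al q_k^{(1)}(\az)dz_k=0$ and \eqref{dpj1c}, identify the surviving quadratic block through the score approximation $\p_\theta\log q_k^{(1)}(\az)\approx\p_\theta\log F_{\tz}(\D z_k)$ from \cite{OgiUeh23} (this is exactly their Proposition 5.4, which supplies the $L^2$ bound you flag as the main obstacle), and bound the remainder as in Lemma \ref{lem:negli}. The only cosmetic differences are that the paper expands to third rather than fifth order (third suffices here because the weight is $q_k^{(1)}(\az)$ itself, so Lemma \ref{derpar} controls the cubic remainder directly), and it treats the full $\al$-quadratic at once rather than separating the $\sig$, $\theta$ and cross pieces explicitly as you do.
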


\begin{proof}
From Taylor expansion, we have
\begin{align*}
    \mbbu_{2,n}&=\sumk\intd\intd \Big(\log q_k^{(1)}(\aes)-\log q_k^{(1)}(\az)\Big) q_k^{(1)}(\az)1_{L_n^c}(\D z_k)dz_k \pi_{k-1}(dz_{k-1})\\
    &=\Bigg[\sumk\intd\intd \p_\al q_k^{(1)}(\az)1_{L_n^c}(\D z_k)dz_k \pi_{k-1}(dz_{k-1})\Bigg]\big[\hat{v}_{n,\al}\big]\\
    &+\Bigg[\frac{1}{2}\sumk\intd\intd \frac{(\p_\al q_k^{(1)}(\az))^{\otimes2}-q_k^{(1)}(\az)\p_\al^2 q_k^{(1)}(\az)}{q_k^{(1)}(\az)}1_{L_n^c}(\D z_k)dz_k \pi_{k-1}(dz_{k-1})\Bigg]\big[(\hat{v}_{n,\al})^{\otimes 2}\big]\\
    &+\Bigg[\frac{1}{6}\sumk\intd\intd \p_\al^3\bigg(\log q_k^{(1)}(\check{\al}_n)\bigg)q_k^{(1)}(\az)1_{L_n^c}(\D z_k)dz_k \pi_{k-1}(dz_{k-1})\Bigg]\big[(\hat{v}_{n,\al})^{\otimes 3}\big],
\end{align*}
with $\check{\al}_n=s\az+(1-s)\aes$ for a random $s\in(0,1)$.

As was shown in the previous proof, for all $\tau\in(0,h_n)$ and $x\in\mbbr^d$, we have
\begin{align*}
    &\int \frac{\p_\al p_{\tau}(x,x_1;\al)}{p_{\tau}(x,x_1;\al)}p_{\tau}(x,x_1;\al)dx_1\\
    &=\int\frac{1}{\tau}\suml E\bigg[\del\big(\p_{\al_{l_1}} Z^c_{\tau,l_1}\cdot U_{l_1}\big)\bigg|Z^c_0=x,Z^c_{\tau}=x_1\bigg]p_{\tau}(x,x_1;\al)dx_1\\
    &=0.
\end{align*}
Moreover, it follows from Assumption \ref{jdist} that for all $\tau\in(0,h_n)$ and $x\in\mbbr^d$,
\begin{align*}
    &\intd\int\int p_{\tau}(x,x_1;\al) \p_\theta F_\theta(x_2) p_{h_n-\tau}(x_1+x_2, y; \al) dx_1dx_2dy\\
    &=\int \p_\theta F_\theta(x_2)dx_2=\p_\theta\Bigg(\int F_\theta(x_2)dx_2\Bigg)=0.
\end{align*}
Hence we obtain
    \begin{align}\label{q10}
        \sumk\intd\intd \p_\al q_k^{(1)}(\az) dz_k \pi_{k-1}(dz_{k-1})=0,
    \end{align}
and combined with \eqref{dpj1c}, we do
    \begin{align}\label{qc10}
        \sumk\intd\intd \p_\al q_k^{(1)}(\az) 1_{L_n^c}(\D z_k)dz_k \pi_{k-1}(dz_{k-1})=o(1).
    \end{align}
In the same way, we also have
$$\sumk\intd\intd \p_\al^2 q_k^{(1)}(\az) 1_{L_n^c}(\D z_k)dz_k \pi_{k-1}(dz_{k-1})=o(1).
$$
By \cite[Proposition 5.4]{OgiUeh23}, there exists a positive constant $\ep\in(0,1)$ such that
\begin{align*}
    \intd \Bigg|\frac{\p_\theta q_k^{(1)}(\az)}{q_k^{(1)}(\az)}-\frac{\p_\theta F_{\tz}(\D z_k)}{F_{\tz}(\D z_k)}\Bigg|^2q_k^{(1)}(\az)1_{L_n^c}(\D z_k)dz_k\lesssim h_n^{1+\ep}(1+|z_{k-1}|^C).
\end{align*}
Together with Lemma \ref{derpar} and \cite[Lemma D.3]{OgiUeh23}, it follows that
\begin{align*}
    &E\Bigg[\Bigg[\sumk\intd\intd \frac{(\p_\al q_k^{(1)}(\az))^{\otimes2}-q_k^{(1)}(\az)\p_\al^2 q_k^{(1)}(\az)}{q_k^{(1)}(\az)}1_{L_n^c}(\D z_k)dz_k \pi_{k-1}(dz_{k-1})\Bigg]\big[(\hat{v}_{n,\al})^{\otimes 2}\big]\Bigg]\\
    &=\Bigg[\frac{1}{nh_n}\sumk \intd\intd \frac{(\p_\theta F_{\tz}(\D z_k))^{\otimes 2}}{F_{\tz}(\D z_k)}q_k^{(1)}(\az)1_{L_n^c}(\D z_k)dz_k\Bigg]E[(\hat{u}_{n,\theta})^{\otimes 2}]+o(1)\\
    &=\frac{1}{\lz}\Tr(\Gam_{\theta,2}\Gam_\theta^{-1})+o(1).
\end{align*}
Finally, from a similar estimate used in the proof of Lemma \ref{lem:negli}, we have
$$E\Bigg[\Bigg[\sumk\intd\intd \p_\al^3\bigg(\log q_k^{(1)}(\check{\al}_n)\bigg)q_k^{(1)}(\az)1_{L_n^c}(\D z_k)dz_k \pi_{k-1}(dz_{k-1})\Bigg]\big[(\hat{v}_{n,\al})^{\otimes 3}\big]\Bigg]=o(1),$$
and thus the desired result follows.
\end{proof}

By combining Proposition \ref{eKL} and Lemmas \ref{BQL}-\ref{Lestu2}, we get Theorem \ref{asmun}.

\subsection{Proof of Theorem \ref{relpro}}
Although the proof is almost the same as \cite[Theorem 4.4 (c)]{FasKim17} and \cite[Theorem 5]{EguMas24}, we give a self-contained proof to improve readability.
Define a map $f:\mbbr^{p_{\mcm_\star}}\to\mbbr^{p_{\mcm}}$ by 
$$f(x)=Fx+c.$$
From the additional nested condition and the estimates of $\tilde{b}_{1,\lam}$, Taylor expansion yields that
\begin{align*}
    &\mbbh_n^{(\mcm)}(\aes,\les)-\mbbh_n^{(\mcm_\star)}(\aes^{(\star)},\les)\\
    &=\bar{\mbbh}_{n}^{(\mcm)}(\aes)-\bar{\mbbh}_{n}^{(\mcm_\star)}(\aes^{(\star)})+o_p(1)\\
    &=-\p_\al \bar{\mbbh}_{n}^{(\mcm)}(\aes)[f(\aes^{(\star)})-\aes]-\frac{1}{2}\p_\al^{ 2} \bar{\mbbh}_{n}^{(\mcm)}(\check{\al}_n)\big[(f(\aes^{(\star)})-\aes)^{\otimes2}\big]+o_p(1),\\
    &=-\frac{1}{2}\p_\al^{ 2} \bar{\mbbh}_{n}^{(\mcm)}(\check{\al}_n)\big[(f(\aes^{(\star)})-\aes)^{\otimes2}\big]+o_p(1)
\end{align*}
with $\check{\al}_n=s\aes+(1-s)f(\aes^{(\star)})$ for a random $s\in(0,1)$.
Then, from $f(\al_0^{(\star)})=\al_0$, the chain rule leads to $\p_{\sig^{(\star)}}\bar{\mbbh}_n^{\mcm_\star}(\al_0^{(\star)})=F^\top_\sig\p_{\sig}\bar{\mbbh}_n^{\mcm}(\al_0)$ and $\p_{\sig^{(\star)}}^2\bar{\mbbh}_n^{\mcm_\star}(\al_0^{(\star)})=F^\top_\sig\p_{\sig}^2\bar{\mbbh}_n^{\mcm}(\al_0)F_\sig$.
Hence, Taylor expansion and the estimates of the quasi-likelihood function yield
\begin{align*}
    &\sqrt{n}(f_\sig(\hat{\sig}_n^{(\star)})-\hat{\sig}_n)\\
    &=\sqrt{n}(f_\sig(\hat{\sig}_n^{(\star)})-f_\sig(\sig_0^{(\star)}))-\sqrt{n}(\hat{\sig}_n-\sig_0)\\
    &=F_\sig\Bigg(F_\sig^\top\frac{1}{n}\p_{\sig}^2\bar{\mbbh}_n^{\mcm}(\al_0)F_\sig\Bigg)^{-1}F_\sig^\top\frac{1}{\sqrt{n}}\p_{\sig}\bar{\mbbh}_n^{\mcm}(\al_0)-\Bigg(\frac{1}{n}\p_{\sig}^2\bar{\mbbh}_n^{\mcm}(\al_0)\Bigg)^{-1}\frac{1}{\sqrt{n}}\p_{\sig}\bar{\mbbh}_n^{\mcm}(\al_0)+o_p(1)\\
    &=F_\sig\Big(\Gam^{-1/2}_\sig-\big(F_\sig^\top\Gam_\sig F_\sig\big)^{-1}F_\sig^\top \Gam^{1/2}_\sig\Big)\mcn_\sig+o_p(1),
\end{align*}
where the map $f_\sig:\mbbr^{p_\sig^{(\star)}}\to\mbbr^{p_\sig}$ is defined as $f_\sig(x)=F_\sig x+c_\sig$ and $\mcn_\sig\sim N(0, I_{p_\sig})$.
Clearly, a similar decomposition holds for $\sqrt{nh_n}(f(\hat{\theta}_n^{(\star)})-\hat{\theta}_n)$.
Let 
$$\Gam_{F,\sig,\theta}=\diag\{\Gam^{1/2}_\theta F_\theta(F_\theta^\top\Gam_\theta F_\theta)^{-1}F_\theta^\top\Gam^{1/2}_\theta,\Gam^{1/2}_\sig F_\sig(F_\sig^\top\Gam_\sig F_\sig)^{-1}F_\sig^\top\Gam^{1/2}_\sig\}.$$
It is easy to see that $I_{p_\mcm}-\Gam_{F,\sig,\theta}$ is a projection matrix and thus its eigenvalue is $0$ or $1$.
Moreover, since we have
\begin{align*}
    &\Tr(I_{p_\mcm}-\Gam_{F,\sig,\theta})\\
    &=\Tr(I_{p_\mcm})-\Tr(\diag\{\Gam^{1/2}_\theta F_\theta(F_\theta^\top\Gam_\theta F_\theta)^{-1}F_\theta^\top\Gam^{1/2}_\theta,\Gam^{1/2}_\sig F_\sig(F_\sig^\top\Gam_\sig F_\sig)^{-1}F_\sig^\top\Gam^{1/2}_\sig\})\\
    &=\Tr(I_{p_\mcm})-\Tr(I_{p_{\mcm_\star}})=p_\mcm-p_{\mcm_{\star}},
\end{align*}
there exists an orthogonal matrix $U$ such that
$$U^\top (I_{p_\mcm}-\Gam_{F,\sig,\theta})U=
\begin{blockarray}{ccccccc}
\BAmulticolumn{3}{c}{\overbrace{\hspace{5em}}^{p_\mcm-p_{\mcm_{\star}}}} &\BAmulticolumn{3}{c}{\overbrace{\hspace{5em}}^{p_{\mcm_{\star}}}}\\
\begin{block}{(ccccccc)}
1&&&&&&\\
&\ddots&&&&\\
&&1&&&\\
&&&0&&\\
&&&&\ddots&\\
&&&&&0\\
\end{block}
\end{blockarray}
$$
Then, by letting $\mcn\sim N(0, I_{p_\mcm})$ and $\tilde{\mcn}=U^\top\mcn$, we obtain
\begin{align*}
    &2\Big(\mbbh_n^{(\mcm)}(\aes,\les)-\mbbh_n^{(\mcm_\star)}(\aes^{(\star)},\les)\Big)\\
    &=-\p_\al^{ 2} \bar{\mbbh}_{n}^{(\mcm)}(\check{\al}_n)\big[(f(\aes^{(\star)})-\aes)^{\otimes2}\big]+o_p(1)
    =\mcn^\top(I_{p_\mcm}-\Gam_{F,\sig,\theta})\mcn+o_p(1)\\
    &=\sum_{j=1}^{p_\mcm-p_{\mcm_{\star}}} \tilde{\mcn}_j^2+o_p(1),
\end{align*}
Hence, the proof is complete.

\section*{Acknowledgment}
This work was supported by JSPS KAKENHI Grant Number JP19K20230.

\bibliographystyle{abbrv} 
\bibliography{YU_bibs.bib}

\begin{thebibliography}{10}

\bibitem{Aka74}
H.~Akaike.
\newblock A new look at the statistical model identification.
\newblock {\em IEEE Trans. Automatic Control}, AC-19:716--723, 1974.
\newblock System identification and time-series analysis.

\bibitem{AmoGlo21}
C.~Amorino and A.~Gloter.
\newblock Invariant density adaptive estimation for ergodic jump-diffusion
  processes over anisotropic classes.
\newblock {\em J. Statist. Plann. Inference}, 213:106--129, 2021.

\bibitem{App09}
D.~Applebaum.
\newblock {\em {\em L\'evy processes and stochastic calculus}}, volume 116 of
  {\em Cambridge Studies in Advanced Mathematics}.
\newblock Cambridge University Press, Cambridge, second edition, 2009.

\bibitem{CheHuXieZha17}
Z.-Q. Chen, E.~Hu, L.~Xie, and X.~Zhang.
\newblock Heat kernels for non-symmetric diffusion operators with jumps.
\newblock {\em J. Differential Equations}, 263(10):6576--6634, 2017.

\bibitem{EguMas18}
S.~Eguchi and H.~Masuda.
\newblock Schwarz type model comparison for {LAQ} models.
\newblock {\em Bernoulli}, 24(3):2278--2327, 2018.

\bibitem{EguMas24}
S.~Eguchi and H.~Masuda.
\newblock Gaussian quasi-information criteria for ergodic {L}\'evy driven
  {SDE}.
\newblock {\em Ann. Inst. Statist. Math.}, 76(1):111--157, 2024.

\bibitem{EguUeh21}
S.~Eguchi and Y.~Uehara.
\newblock Schwartz-type model selection for ergodic stochastic differential
  equation models.
\newblock {\em Scand. J. Stat.}, 48(3):950--968, 2021.

\bibitem{FasKim17}
V.~Fasen and S.~Kimmig.
\newblock Information criteria for multivariate {CARMA} processes.
\newblock {\em Bernoulli}, 23(4A):2860--2886, 2017.

\bibitem{Gob01}
E.~Gobet.
\newblock Local asymptotic mixed normality property for elliptic diffusion: a
  {M}alliavin calculus approach.
\newblock {\em Bernoulli}, 7(6):899--912, 2001.

\bibitem{Kes97}
M.~Kessler.
\newblock Estimation of an ergodic diffusion from discrete observations.
\newblock {\em {\em Scandinavian Journal of Statistics. Theory and
  Applications}}, 24(2):211--229, 1997.

\bibitem{KohNuaTra17}
A.~Kohatsu-Higa, E.~Nualart, and N.~K. Tran.
\newblock L{AN} property for an ergodic diffusion with jumps.
\newblock {\em Statistics}, 51(2):419--454, 2017.

\bibitem{KohNuaTra22}
A.~Kohatsu-Higa, E.~Nualart, and N.~K. Tran.
\newblock Density estimates for jump diffusion processes.
\newblock {\em Appl. Math. Comput.}, 420:Paper No. 126814, 10, 2022.

\bibitem{Man04}
C.~Mancini.
\newblock Estimation of the characteristics of the jumps of a general
  {P}oisson-diffusion model.
\newblock {\em Scand. Actuar. J.}, (1):42--52, 2004.

\bibitem{Mas08}
H.~Masuda.
\newblock On stability of diffusions with compound-{P}oisson jumps.
\newblock {\em Bull. Inform. Cybernet.}, 40:61--74, 2008.

\bibitem{OgiUeh23}
T.~Ogihara and Y.~Uehara.
\newblock Local asymptotic normality for ergodic jump-diffusion processes via
  transition density approximation.
\newblock {\em Bernoulli}, 29(3):2342--2366, 2023.

\bibitem{OgiYos11}
T.~Ogihara and N.~Yoshida.
\newblock Quasi-likelihood analysis for the stochastic differential equation
  with jumps.
\newblock {\em Stat. Inference Stoch. Process.}, 14(3):189--229, 2011.

\bibitem{Sat99}
K.-i. Sato.
\newblock {\em L{\'e}vy processes and infinitely divisible distributions}.
\newblock Cambridge university press, 1999.

\bibitem{ShiYos06}
Y.~Shimizu and N.~Yoshida.
\newblock Estimation of parameters for diffusion processes with jumps from
  discrete observations.
\newblock {\em Stat. Inference Stoch. Process.}, 9(3):227--277, 2006.

\bibitem{TraNgo23}
N.~K. Tran and H.-L. Ngo.
\newblock L{AMN} property for jump diffusion processes with discrete
  observations on a fixed time interval.
\newblock {\em J. Statist. Plann. Inference}, 225:1--28, 2023.

\bibitem{Uch10}
M.~Uchida.
\newblock Contrast-based information criterion for ergodic diffusion processes
  from discrete observations.
\newblock {\em Ann. Inst. Statist. Math.}, 62(1):161--187, 2010.

\bibitem{UchYos01}
M.~Uchida and N.~Yoshida.
\newblock Information criteria in model selection for mixing processes.
\newblock {\em Stat. Inference Stoch. Process.}, 4(1):73--98, 2001.

\end{thebibliography}

\end{document}